\newcommand\dd{\mathrm{d}}
\newcommand\ud{\,\mathrm{d}}
\newcommand\EE{\mathbb{E}}
\newcommand\Dd{\mathcal{D}}
\newcommand\Ss{\mathcal{S}}
\newcommand\Ff{\mathcal{F}}
\newcommand{\RR}{\mathbb{R}}
\newcommand{\NN}{\mathbb{N}}
\newcommand{\PP}{\mathbb{P}}
\newcommand{\inp}[2]{\langle #1,#2 \rangle}
\newcommand{\Nabla}{\nabla}
\newcommand{\Exp}{\mathrm{Exp}}
\newcommand{\Oo}{\mathcal{O}}
\newcommand{\cA}{\mathcal{A}}
\newcommand{\cE}{\mathcal{E}}
\newcommand{\cP}{\mathcal{P}}
\newcommand{\cX}{\mathcal{X}}
\renewcommand{\epsilon}{\varepsilon}
\theoremstyle{plain}
\newtheorem{theorem}{Theorem}[section]
\theoremstyle{definition}
\theoremstyle{remark}
\newtheorem{remark}[theorem]{Remark}
\theoremstyle{plain}
\newtheorem{corollary}[theorem]{Corollary}
\newtheorem{lemma}[theorem]{Lemma}
\newtheorem{proposition}[theorem]{Proposition}
\newtheorem{definition}[theorem]{Definition}
\numberwithin{equation}{section}
\begin{document}

\title{Large deviations for geodesic random walks}
\author{Rik Versendaal}
\address{Delft Institute of Applied Mathematics//Delft University of 
Technology\\P.O. Box 5031, 2600 GA Delft\\The Netherlands}
\email{R.Versendaal@TUDelft.nl}
\date\today

\begin{abstract}
We provide a direct proof of Cram\'er's theorem for geodesic random walks in a complete Riemannian manifold $(M,g)$. We show how to exploit the vector space structure of the tangent spaces to study large deviation properties of geodesic random walks in $M$. Furthermore, we reveal the geometric obstructions one runs into and overcome these by providing Taylor expansions of the inverse Riemannian exponential map, while also comparing the differential of the exponential map to parallel transport. Finally, we obtain the analogue of Cram\'er's theorem for geodesic random walks by showing that the curvature terms arising in this geometric analysis can be controlled and are negligible on an exponential scale.\\

\textit{keywords:} large deviations, Cram\'er's theorem, geodesic random walks, Riemannian exponential map, Jacobi fields
\end{abstract}

\thanks{The author is supported by the Peter Paul Peterich Foundation via TU Delft University Fund.}

\maketitle

\tableofcontents

\section{Introduction}\label{section:Introduction}

Random walks are among the most extensively studied discrete stochastic processes. Given a sequence of random variables $\{X_n\}_{n\geq1}$ in some vector space $V$, one defines the random walk with increments $\{X_n\}_{n\geq1}$ as the random variable
\[
S_n = \sum_{i=1}^n X_i.
\]
When rescaled by a factor $\frac1n$, one can study large deviations for the so obtained sequence $\{\frac1nS_n\}_{n\geq1}$. When the increments are independent and identically distributed, Cram\'er's theorem (\cite{DZ98,Hol00}) states that the sequence $\{\frac1nS_n\}_{n\geq 1}$ satisfies the large deviations principle. Intuitively, this means that there is some rate function $I:V \to [0,\infty]$ such that
\[
\PP\left(\frac1n\sum_{i=1}^n X_i \approx x\right) \approx e^{-nI(x)}.
\]
More specifically, the rate function is given as the Legendre transform of the log moment generating function of the increments, i.e.,
\[
I(x) = \sup_{\lambda} \left\{\inp{\lambda}{x} - \Lambda(\lambda)\right\},
\]
where $\Lambda(\lambda) = \log\EE(e^{\inp{\lambda}{X_1}})$. One may weaken the independence assumption to obtain for example the G\"artner-Ellis theorem, see e.g. \cite{DZ98,Hol00}. Also, Cram\'er's theorem can be generalized to the setting of topological vector spaces or Banach spaces. Furthermore, Cram\'er's theorem provides a basis for path space large deviations, such as Mogulskii's theorem (random walks) and Schilder's theorem (Brownian motion), see e.g. \cite{DZ98,Str84,DS89}. Recently, it was shown in \cite{KRV18} that the analogue of Cram\'er's theorem (as well as Mogulskii's theorem and Schilder's theorem) also holds in the Riemannian setting. \\

In \cite{KRV18}, Cram\'er's theorem for geodesic random walks is obtained by first proving the Riemannian analogue of Moguslkii's theorem, the path space analogue of Cram\'er's theorem. As evaluation in the end point of trajectories is a continuous map, Cram\'er's theorem then follows by an application of the contraction principle (see e.g. \cite[Chapter 4]{DZ98}). To obtain Mogulskii's theorem, the Feng-Kurtz formalism (\cite{FK06}) is used. However, this is the reverse order in which the theorems are obtained naturally in the Euclidean case. In the Euclidean setting, one uses Cram\'er's theorem to prove Mogulskii's theorem by first proving large deviations for the finite dimensional distributions and then deducing from these the large deviations on path space. Furthermore, the Feng-Kurtz approach is only suitable for Markov processes and hence does not extend to the case where the increments are allowed to be dependent. This causes an obstruction in finding a Riemannian analogue of the G\"artner-Ellis theorem for example.

These observations raise the question whether it is possible to avoid the use of the Feng-Kurtz formalism and path space large deviations to obtain Cram\'er's theorem for geodesic random walks. It turns out that it is possible to only study the underlying geometry in order to prove Cram\'er's theorem. This gives us new insight in what geometrical aspects allow us to still obtain the large deviation principle for rescaled geodesic random walks, even though the geodesic random walk is in general no longer a simple function of its increments. Furthermore, this approach does not rely on the fact that the random walk is a Markov process, and thus seems suitable to be extended to random walks with dependent increments for example. This will be investigated further in future work. \\

The main difficulty in the Riemannian setting, is that we lack a vector space structure to define a random walk as sum of increments. The appropriate analogue is a geodesic random walk as introduced by J\o rgensen in \cite{Jor75}. To define a geodesic random walk, we need to find a replacement for the additive structure, as well as a generalization of the increments. It turns out that as increments one uses tangent vectors, while the additive structure is replaced by an application of the Riemannian exponential map.

More precisely, we introduce a family of probability measures $\{\mu_x\}_{x\in M}$ such that for each $x \in M$, $\mu_x$ is a measure on $T_xM$, the tangent space at $x$. These measures $\{\mu_x\}_{x\in M}$ provide the space-dependent distribution of the increments. Now we start a random walk at some initial point $Z_0 = x_0 \in M$. Then recursively, we define for $k = 0,\ldots,n-1$ the random variable
\[
Z_{k+1} = \Exp_{Z_k}\left(\frac1nX_{k+1}\right),
\]
where $X_{k+1}$ is distributed according to $\mu_{Z_k}$. Hence, the random variable $Z_n$ takes values in $M$ and is the natural analogue of the empirical mean of the increments $X_1,\ldots,X_n$. In Euclidean space, this definition reduces to the usual one, as the Riemannian exponential map is simply vector addition, i.e.,
\[
\Exp_xv = x + v.
\]

To obtain an analogue of Cram\'er's theorem, we also need to generalize the notion of the increments of the random walk being identically distributed, since the increments are no longer in the same space. To compare two distributions $\mu_x$ and $\mu_y$, we need to identify the tangent spaces $T_xM$ and $T_yM$. We do this by taking a curve $\gamma$ connecting $x$ and $y$ and using parallel transport along $\gamma$. Because different curves lead to different identifications, we say that the distributions $\mu_x$ and $\mu_y$ are identical if for all curves $\gamma$ from $x$ to $y$ we have
\[
\mu_x = \mu_y \circ \tau^{-1}_{yx;\gamma},
\]
where $\tau$ denotes parallel transport. Equivalently, one can characterize this property by assuming that the log moment generating functions are invariant under parallel transport, i.e.,
\[
\Lambda_x(\lambda) = \Lambda_y(\tau_{xy;\gamma}\lambda),
\]
where $\Lambda_x(\lambda) = \log\int_{T_xM}e^{\inp{\lambda}{v}}\mu_x(\dd v)$.\\

In Euclidean space, the end point of the random walk is a simple function of the increments. In the Riemannian setting, curvature ensures that this is in general no longer the case. For example, the endpoint in general depends on the order of the increments. Nonetheless, it is possible to utilize the vector space structure of the tangent spaces. By controlling the error induced by the curvature, the large deviations for the geodesic random walk $Z_n$ can be obtained from the large deviations for $\frac1n\sum_{i=1}^n \tilde X_i$, the empirical mean of the appropriately pulled back increments in $T_{x_0}M$, were $x_0$ is the starting point of the random walk.

To support this claim, we can also define an alternative random walk in $M$. For this, we take a sequence of independent, identically distributed random variables $\{Y_n\}_{n\geq1}$ in $T_{x_0}M$ with distribution $\mu_{x_0}$ and consider the process 
\[
\tilde Z_n = \Exp_{x_0}\left(\frac1n\sum_{i=1}^n Y_i\right).
\]
In general, $\tilde Z_n$ is different from $Z_n$, even in distribution. Although our method of proving the large deviations for $Z_n$ does not immediately allow us to conclude that $Z_n$ and $\tilde Z_n$ are exponentially equivalent, the main idea of our proof does rely on the fact that we can (in some sense) relate and compare the geodesic random walk to a sum of independent, identically distributed random variables in the tangent space at $x_0$, following the distribution $\mu_{x_0}$.\\

The paper is organised as follows. In Section \ref{section:notation} we introduce the main notions we use from large deviation theory to obtain our results, as well as some notation and results from differential geometry. Section \ref{section:geodesic_random_walks} introduces the geodesic random walks. In Section \ref{section:approach_proof} we give the precise statement of Cram\'er's theorem for geodesic random walks. Additionally, we provide an overview of the various steps that are needed for the proof. In Section \ref{section:geometry} we obtain a Taylor expansions of the Riemannian exponential map with appropriate error bound. Furthermore, we compare the differential of the exponential map to parallel transport. Finally, we also provide bounds for how far geodesics, possibly starting at different points, can spread in a given amount of time. These geometric results are key ingredients in the proof of Cram\'er's theorem, which is given in Section \ref{section:Cramer_bounded}.


\section{Notation and important notions}\label{section:notation}

In this section we collect some important notions and fix the notation we will be using. Firstly, we introduce large deviation principles, along with some general useful results from the theory. Following up, we introduce the necessary tools from differential geometry and fix the notation for the various objects.

\subsection{Large deviation principle}

Large deviation principles are concerned with the asymptotic behaviour on an exponential scale of a sequence of probability measures $\{\nu_n\}_{n\geq 1}$. This behaviour is governed by a rate function. We make this precise in the following definition.

\begin{definition}
Let $\{\nu_n\}_{n\geq1}$ be a sequence of probability measures with values in a metric space $\cX$.
\begin{enumerate}
\item A \emph{rate function} is a lower semicontinuous function $I: \cX \to [0,\infty]$. A rate function is called \emph{good} if the level sets $\{x \in \cX| I(x) \leq c\}$ are compact for any $c \geq 0$.
\item The sequence $\{\nu_n\}_{n\geq1}$ satisfies the \emph{large deviation principle (LDP)} in $\cX$ with rate function $I$ if the following are satisfied:
\begin{enumerate}
\item (Upper bound) For any closed $F \subset \cX$
\[\label{eq:upper_bound}
\limsup_{n\to\infty} \frac1n\log\nu_n(F) \leq -\inf_{x\in F} I(x).
\]
\item (Lower bound) For any open $G \subset \cX$
\[
\liminf_{n\to\infty} \frac1n\log\nu_n(G) \geq -\inf_{x\in G} I(x).
\]
\end{enumerate}

\item The sequence $\{\nu_n\}_{n\geq1}$ is \emph{exponentially tight} if for every $\alpha > 0$ there exists a compact set $K_\alpha \subset \cX$ satisfying
\[
\limsup_{n\to\infty} \frac1n\log\nu_n(K_\alpha^c) < - \alpha.
\]
\end{enumerate}
\end{definition}

When a sequence of probability measures is exponentially tight, it is sufficient to know the upper bound of the large deviation principle only for compact sets. The upper bound then also immediately holds for all closed sets, see e.g. \cite[Section 1.2]{DZ98}.

\subsection{Riemannian geometry}\label{section:intro_geometry}

In this section we introduce the necessary notions from differential geometry, see for example \cite{Spi79} for a general introduction. We mainly focus on Riemannian geometry, for which we refer to \cite{Lee97} among others.\\

Let $(M,g)$ be a Riemannian manifold of dimension $N$. As usual, we denote by $TM$ the tangent bundle of $M$. For a point $x \in M$ we write $T_xM$ for the tangent space at $x$. Tangent vectors are usually denoted by $v$. A smooth assignment of tangent vectors to all points at $M$ is called a vector field, and the set of vector fields is denoted by $\Gamma(TM)$.\\

For $x\in M$ and $v,w \in T_xM$ we write the inner product as $\inp{v}{w}_{g(x)}$, where the subscript is omitted when the tangent space is understood. Given the inner product, we define the length of $v \in T_xM$ by its usual formula
\[
|v|_{g(x)} = \sqrt{\inp{v}{v}_{g(x)}}.
\]

Given a curve $\gamma:[a,b] \to M$, we define its length by
\[
L(\gamma;[a,b]) = \int_a^b|\dot\gamma(t)|\ud t.
\]
Using this length function, we define the Riemannian distance $d$ on $M$ as 
\begin{equation}\label{eq:Riemannian_distance}
d(x,y) := \inf\{L(\gamma)| \gamma:[a,b] \to M, \gamma(a) = x, \gamma(b) = y, \gamma \mbox{ piecewise smooth}\}.
\end{equation}

\subsubsection{Connection and parallel transport}

Associated to the Riemannian metric $g$ is a unique connection $\Nabla$, the \textit{Levi-Civita} connection, which is compatible with the metric and torsion free. 

A vector field $v(t)$ along a curve $\gamma(t)$ is called parallel if $D_tv(t) := \Nabla_{\dot\gamma(t)}v(t) = 0$. If the vector field $\dot\gamma(t)$ is parallel along $\gamma(t)$, then $\gamma$ is called a geodesic. It turns out that optimal paths for the distance between points in $M$ are geodesics for the Levi-Civita connection.

Equivalent to having a connection is having a notion of parallel transport. Given a curve $\gamma:[a,b] \to M$ and $v \in T_{\gamma(a)}M$, we can consider the unique solution $v(t)$ of the differential equation 
\[
\Nabla_{\dot\gamma(t)}v(t) = 0,  \qquad v(0) = v.
\]
This allows us to define a linear map 
\[
\tau_{\gamma(a)\gamma(t);\gamma}:T_{\gamma(a)}M \to T_{\gamma(t)}M
\]
by setting $\tau_{\gamma(a)\gamma(t);\gamma}v = v(t)$. The map $\tau_{\gamma(a)\gamma(t);\gamma}$ is called \emph{parallel transport} along $\gamma$. We omit the reference to the curve $\gamma$ when it is understood. Because $\Nabla$ is compatible with the Riemannian metric, parallel transport is in fact an isometry. 

Conversely, we can use parallel transport to compute covariant derivatives. To this end, let $v,w \in \Gamma(TM)$ be vector fields and $x \in M$. Let $\gamma$ be a curve with $\gamma(0) = x$ and $\dot\gamma(0) = v$. Then 
\[
\Nabla_vw(x) = \lim_{h\to0} \frac{\tau^{-1}_{x\gamma(h)}w(\gamma(h)) - w(x)}{h}.
\]

\subsubsection{Riemannian exponential map}

Given $x \in M$, define for every $v \in T_xM$ the geodesic $\gamma_v$ satisfying $\gamma_v(0) = x$ and $\dot\gamma_v(0) = v$. A priori, this geodesic does not exist for all time $t$. We say that the manifold $M$ is \textit{complete} if every such geodesic can be extended indefinitely. By the Hopf-Rinow theorem, this is equivalent to the completeness of $M$ as a metric space with the Riemannian distance $d$ defined in \eqref{eq:Riemannian_distance}.

We now define the \textit{Riemannian exponential map} $\Exp_x:\cE(x) \to M$ by setting $\Exp_xv = \gamma_v(1)$, where $\cE(x) \subset T_xM$ contains all $v \in T_xM$ for which $\gamma_v$ as above exists at least on $[0,1]$. If $M$ is complete, we have $\cE(x) = T_xM$. If additionally $M$ is simply connected, it holds that $\Exp_x$ is surjective.

However, due to curvature, the exponential map is not necessarily injective. For $x \in M$ we define the \textit{injectivity radius} $\iota(x) \in (0,\infty]$ as
\[
\iota(x) = \sup\{\delta > 0| \Exp_x \mbox{ is injective on } B(0,\delta)\}.
\]
Given a set $A \subset M$, the injectivity radius of $A$ is defined by
\begin{equation}\label{eq:injectivity_radius_set}
\iota(A) = \inf\{\iota(x)| x \in A\}.
\end{equation}
It can be shown (see e.g. \cite{Kli82}) that the map $x \mapsto \iota(x)$ is continuous on $M$. Consequently, for a compact set $K$ we have $\iota(K) > 0$.\\

The differential $\dd(\Exp_x)$ of the exponential map at $x$ is a linear map from $T(T_xM)$ into $TM$. Upon identifying $T_v(T_xM)$ with $T_xM$, we find that for any $v \in T_xM$ we have
\[
\dd(\Exp_x)_v:T_xM \to T_{\Exp_xv}M.
\]

\subsubsection{Jacobi fields}\label{section:Jacobi}

Let $\gamma:[0,1] \to M$ be a smooth curve. A \textit{variation} of $\gamma$ is a smooth map $\Gamma:(-\epsilon,\epsilon) \times [0,1] \to M$ such that $\Gamma(0,t) = \gamma(t)$ for all $t \in [0,1]$. Denoting by $s$ the first variable, the \textit{variational vector field} $V$ of $\Gamma$ is defined as 
\[
V(t) = \frac{\dd}{\dd s}\bigg|_{s=0} \Gamma(s,t) =: \partial_s\Gamma(0,t).
\]
Intuitively, $V$ measures the speed at which the curve $\gamma$ deforms. 

We denote by $D_t$ the covariant derivative along the curve $t \mapsto \Gamma(s,t)$, and similarly for $D_s$. Because the Levi-Civita connection is symmetric, we obtain the following symmetry lemma, see e.g. \cite[Lemma 6.3]{Lee97}.

\begin{lemma}[Symmetry lemma]\label{lemma:symmetry}
Let $\gamma:[0,1]\to M$ be a smooth curve and $\Gamma:(-\epsilon,\epsilon) \times [0,1] \to M$ a variation of $\gamma$. If $M$ is equipped with the Levi-Civita connection, then
\[
D_s\partial_t\Gamma(s,t) = D_t\partial_s\Gamma(s,t).
\]
\end{lemma}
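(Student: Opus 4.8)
The statement to be proved is the Symmetry Lemma: for a variation $\Gamma:(-\epsilon,\epsilon)\times[0,1]\to M$ of a smooth curve $\gamma$, one has $D_s\partial_t\Gamma = D_t\partial_s\Gamma$ when $M$ carries the Levi-Civita connection. The plan is to verify this in local coordinates and then invoke the torsion-freeness of the connection, which is precisely the symmetry of the Christoffel symbols in their lower indices.

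\begin{proof}[Proof sketch]
First I would work in a coordinate chart $(x^1,\dots,x^N)$ around a point $\Gamma(s_0,t_0)$, writing $\Gamma(s,t) = (x^1(s,t),\dots,x^N(s,t))$. Then the coordinate vector fields along $\Gamma$ are $\partial_t\Gamma = \sum_k \frac{\partial x^k}{\partial t}\,\partial_k$ and $\partial_s\Gamma = \sum_k \frac{\partial x^k}{\partial s}\,\partial_k$. Next I would apply the coordinate formula for the covariant derivative of a vector field along a curve: for $D_t$ acting on $\partial_s\Gamma$, I get
\[
D_t\partial_s\Gamma = \sum_k\left(\frac{\partial^2 x^k}{\partial t\,\partial s} + \sum_{i,j}\Gamma^k_{ij}\,\frac{\partial x^i}{\partial t}\,\frac{\partial x^j}{\partial s}\right)\partial_k,
\]
and symmetrically
\[
D_s\partial_t\Gamma = \sum_k\left(\frac{\partial^2 x^k}{\partial s\,\partial t} + \sum_{i,j}\Gamma^k_{ij}\,\frac{\partial x^i}{\partial s}\,\frac{\partial x^j}{\partial t}\right)\partial_k.
\]
The mixed partials $\frac{\partial^2 x^k}{\partial t\,\partial s}$ and $\frac{\partial^2 x^k}{\partial s\,\partial t}$ agree because the component functions $x^k(s,t)$ are smooth (Clairaut/Schwarz). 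The remaining Christoffel terms agree precisely because the Levi-Civita connection is torsion-free, i.e.\ $\Gamma^k_{ij} = \Gamma^k_{ji}$, so relabelling the summation indices $i\leftrightarrow j$ in one expression turns it into the other. Comparing the two displays term by term gives $D_s\partial_t\Gamma = D_t\partial_s\Gamma$ at the chosen point.

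Finally, since the point $\Gamma(s_0,t_0)$ was arbitrary and such coordinate charts cover $M$, the identity holds everywhere on $(-\epsilon,\epsilon)\times[0,1]$. The only subtlety worth a sentence is that $D_t$ a priori differentiates vector fields along the $t$-curves while $D_s$ differentiates along the $s$-curves, so one should note that both sides of the claimed identity make sense as vector fields along the whole map $\Gamma$; the coordinate computation above handles this transparently since everything is expressed in terms of the single family of coordinate fields $\partial_k$ pulled back along $\Gamma$. There is no real obstacle here: the lemma is essentially a restatement of symmetry of the connection, and the one-line content is that torsion-freeness is exactly what is needed. I would simply cite \cite[Lemma 6.3]{Lee97} for the full details if I wanted to keep the exposition short.
\end{proof}
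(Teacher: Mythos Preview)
Your proof is correct and is exactly the standard argument. Note that the paper does not actually prove this lemma: it simply states it and refers the reader to \cite[Lemma 6.3]{Lee97}, which is precisely the reference you cite at the end and whose proof is the coordinate computation you have written out.
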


Now suppose $\gamma:[0,1] \to M$ is a geodesic. Let $\Gamma:(-\epsilon,\epsilon) \times [0,1] \to M$ be a variation of $\gamma$ such that for any $s \in (-\epsilon,\epsilon)$, the curve $t \mapsto \Gamma(s,t)$ is a geodesic. We call $\Gamma$ a \emph{variation of geodesics}, and the corresponding variational vector field is called a \textit{Jacobi field} along $\gamma$.\\

It is possible to derive a second order differential equation satisfied by Jacobi fields. For this, we need to introduce the Riemann curvature endomorphism. The \textit{Riemann curvature endomorphism} measures the commutativity of second order covariant derivatives of a vector field. More precisely, it is a map $R: \Gamma(TM) \times \Gamma(TM) \times \Gamma(TM) \to \Gamma(TM)$ defined by
\[
R(v,w)u = \Nabla_v\Nabla_wu - \Nabla_w\Nabla_vu - \Nabla_{[v,w]}u,
\]
where $[v,w] =  vw - wv$ is the commutator of the vector fields $v$ and $w$. \\

One can show (see e.g. \cite[Theorem 10.2]{Lee97} or \cite[Section 10.1]{Fra04}) that a Jacobi field $J(t)$ along a geodesic $\gamma$ satisfies
\begin{equation}\label{eq:jacobi}
D_t^2J(t) + R(J(t),\dot\gamma(t))\dot\gamma(t) = 0,
\end{equation}
where $R$ denotes the Riemann curvature endomorphism. Equation \eqref{eq:jacobi} is called the Jacobi equation.\\

If $J(0) = 0$ and $\dot J(0)$ is given, a Jacobi field along a geodesic $\gamma$ satisfying these conditions is
\[
J(t) = \dd(\Exp_{\gamma(0)})_{t\dot\gamma(0)}(t\dot J(0)).
\]
This can be seen by considering the variation $\Gamma(t,s) = \Exp_{\gamma(0)}(t(\dot\gamma(0) + s\dot J(0)))$. The condition that $J(0) = 0$ indicates that all geodesics in the variation start in the same point.

In Euclidean space, this Jacobi field reduces to $J(t) = t\dot J(0)$, which is indeed the variation field of the variation $\Gamma(t,s) = \gamma(0) + t(\dot\gamma(0) + s\dot J(0))$. \\

We conclude this section by collecting some properties of Jacobi fields that we need later on. We include the arguments for the reader's convenience.

\begin{proposition}\label{prop:inner_product_jacobi}
Let $\gamma:[0,1] \to M$ be a geodesic and $J(t)$ a Jacobi field along $\gamma$. Then
\[
\inp{J(t)}{\dot\gamma(t)} = t\inp{\dot J(0)}{\dot\gamma(0)} + \inp{J(0)}{\dot\gamma(0)}
\]
for all $t \in [0,1]$.
\end{proposition}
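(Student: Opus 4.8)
The plan is to differentiate the inner product $\inp{J(t)}{\dot\gamma(t)}$ twice and use the Jacobi equation together with the fact that $\gamma$ is a geodesic. Since the Levi-Civita connection is compatible with the metric, we may differentiate inner products by Leibniz: $\frac{\dd}{\dd t}\inp{J(t)}{\dot\gamma(t)} = \inp{D_tJ(t)}{\dot\gamma(t)} + \inp{J(t)}{D_t\dot\gamma(t)}$, and the second term vanishes because $D_t\dot\gamma(t) = 0$ by the geodesic equation. So $\frac{\dd}{\dd t}\inp{J(t)}{\dot\gamma(t)} = \inp{D_tJ(t)}{\dot\gamma(t)}$.

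Next I would differentiate once more: $\frac{\dd^2}{\dd t^2}\inp{J(t)}{\dot\gamma(t)} = \inp{D_t^2J(t)}{\dot\gamma(t)} + \inp{D_tJ(t)}{D_t\dot\gamma(t)} = \inp{D_t^2J(t)}{\dot\gamma(t)}$, again using that $\gamma$ is a geodesic. Now apply the Jacobi equation \eqref{eq:jacobi}: $D_t^2J(t) = -R(J(t),\dot\gamma(t))\dot\gamma(t)$, so $\frac{\dd^2}{\dd t^2}\inp{J(t)}{\dot\gamma(t)} = -\inp{R(J(t),\dot\gamma(t))\dot\gamma(t)}{\dot\gamma(t)}$. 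The key algebraic fact is the antisymmetry of the curvature tensor in its last pair of slots (equivalently $\inp{R(X,Y)Y}{Y} = 0$ by skew-symmetry of $(Z,W)\mapsto\inp{R(X,Y)Z}{W}$), which forces this to be $0$. Hence $t\mapsto\inp{J(t)}{\dot\gamma(t)}$ is an affine function of $t$.

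It remains to identify the two constants. Evaluating at $t=0$ gives the constant term $\inp{J(0)}{\dot\gamma(0)}$, and from the first-derivative computation the slope is $\inp{D_tJ(t)}{\dot\gamma(t)}$, which we must check is itself constant in $t$ — but that is exactly what the second-derivative computation showed (its derivative is zero), so the slope equals its value at $t=0$, namely $\inp{\dot J(0)}{\dot\gamma(0)}$ (writing $\dot J$ for $D_tJ$). Integrating the affine relation yields $\inp{J(t)}{\dot\gamma(t)} = t\inp{\dot J(0)}{\dot\gamma(0)} + \inp{J(0)}{\dot\gamma(0)}$, as claimed.

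The only real subtlety — hardly an obstacle — is making sure the curvature symmetry $\inp{R(J,\dot\gamma)\dot\gamma}{\dot\gamma} = 0$ is invoked correctly; everything else is a mechanical application of metric compatibility of $\Nabla$, the geodesic equation $D_t\dot\gamma = 0$, and the Jacobi equation. I would present the computation in two short displayed lines for the first and second derivatives and then conclude by integrating.
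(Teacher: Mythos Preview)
Your proof is correct and follows essentially the same approach as the paper: define $f(t) = \inp{J(t)}{\dot\gamma(t)}$, compute $f'(t) = \inp{D_tJ(t)}{\dot\gamma(t)}$ using the geodesic equation, then $f''(t) = -\inp{R(J(t),\dot\gamma(t))\dot\gamma(t)}{\dot\gamma(t)} = 0$ via the Jacobi equation and the curvature symmetry, and conclude that $f$ is affine with the stated coefficients.
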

\begin{proof}
Define $f(t) = \inp{J(t)}{\dot\gamma(t)}$. Then 
\[
f'(t) = \inp{D_t J(t)}{\dot\gamma(t)} + \inp{J(t)}{D_t\dot\gamma(t)} = \inp{D_t J(t)}{\dot\gamma(t)},
\]
because $\gamma$ is a geodesic. We are done once we show that $f''(t) = 0$. For this, notice that, using \eqref{eq:jacobi}
\[
f''(t) = \inp{D_t^2 J(t)}{\dot\gamma(t)} = -\inp{R(J(t),\dot\gamma(t))\dot\gamma(t)}{\dot\gamma(t)} = 0.
\]
Here, the last step follows from the symmetry properties of the Riemann curvature tensor.
\end{proof}

\begin{proposition}\label{prop:norm_jacobi}
Let $\gamma:[0,1] \to M$ be a geodesic and $J(t)$ a Jacobi field along $\gamma$. For every $t \in [0,1]$ there exists $\xi_t \in (0,t)$ such that
\[
|\dot J(t)| = |\dot J(0)| - t\frac{1}{|\dot J(\xi_t)|}\inp{R(J(\xi_t),\dot\gamma(\xi_t))\dot\gamma(\xi_t)}{\dot J(\xi_t)}.
\]
\end{proposition}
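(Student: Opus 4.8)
The plan is to reduce the statement to the one–dimensional mean value theorem applied to the scalar function $\phi(t) := |\dot J(t)| = |D_tJ(t)|$. First I would note that $\phi(t)^2 = \inp{D_tJ(t)}{D_tJ(t)}$ is a smooth function of $t$, since $D_tJ$ is a smooth vector field along $\gamma$ and the metric is smooth; hence $\phi = \sqrt{\phi^2}$ is smooth on any subinterval where it is nonzero. On such an interval, differentiating $\phi^2$ and using compatibility of the Levi-Civita connection with the metric gives $2\phi(t)\phi'(t) = 2\inp{D_t^2J(t)}{D_tJ(t)}$, so
\[
\phi'(t) = \frac{1}{|\dot J(t)|}\inp{D_t^2J(t)}{D_tJ(t)}.
\]

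Next I would substitute the Jacobi equation \eqref{eq:jacobi}, namely $D_t^2J(t) = -R(J(t),\dot\gamma(t))\dot\gamma(t)$, to obtain
\[
\phi'(t) = -\frac{1}{|\dot J(t)|}\inp{R(J(t),\dot\gamma(t))\dot\gamma(t)}{\dot J(t)}.
\]
Applying the mean value theorem to $\phi$ on the interval $[0,t]$ then yields some $\xi_t \in (0,t)$ with $\phi(t) - \phi(0) = t\,\phi'(\xi_t)$, which, after rearranging, is exactly the asserted identity
\[
|\dot J(t)| = |\dot J(0)| - t\frac{1}{|\dot J(\xi_t)|}\inp{R(J(\xi_t),\dot\gamma(\xi_t))\dot\gamma(\xi_t)}{\dot J(\xi_t)}.
\]

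The only genuine subtlety — and the place where the statement implicitly constrains the setting — is that $s \mapsto \sqrt{s}$ fails to be differentiable at $0$, so the formula for $\phi'$ above is valid only while $\dot J$ does not vanish; note that the right-hand side of the claimed identity is likewise only meaningful when $\dot J(\xi_t) \neq 0$. I would therefore carry the argument out under the (harmless for the later applications) hypothesis that $\dot J$ is nowhere zero on $[0,1]$, or, more precisely, restrict $t$ to the maximal interval around $0$ on which $\dot J$ does not vanish, which is open by continuity of $t \mapsto |\dot J(t)|$. Apart from this bookkeeping there is no real obstacle: the proof is a direct single-variable computation combined with the Jacobi equation.
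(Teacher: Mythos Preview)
Your proposal is correct and follows essentially the same approach as the paper: define $f(t)=|\dot J(t)|$, differentiate using metric compatibility, substitute the Jacobi equation, and apply the mean value theorem. In fact you are slightly more careful than the paper, which silently assumes the differentiability of $|\dot J(t)|$ without commenting on the possible vanishing of $\dot J$.
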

\begin{proof}
Define $f(t) = |\dot J(t)|$. We have
\begin{align*}
f'(t)
&=
\frac{1}{|\dot J(t)|}\inp{\ddot J(t)}{\dot J(t)}
\\
&=
-\frac{1}{|\dot J(t)|}\inp{R(J(t),\dot\gamma(t))\dot\gamma(t)}{\dot J(t)}.
\end{align*}
The statement now follows from the mean-value theorem.
\end{proof}


\section{Geodesic random walks}\label{section:geodesic_random_walks}

In order to generalize Cram\'er's theorem to the setting of Riemannian manifolds, we first need to introduce the appropriate analogue of the sequence $\{\frac1n\sum_{i=1}^n X_i\}_{n\geq 0}$ for a sequence of increments $\{X_n\}_{n\geq1}$. In order to do this, we introduce geodesic random walks, following the construction in \cite{Jor75}. Finally, we generalize the notion of identically distributed increments to geodesic random walks and characterize it using log moment generating functions.

\subsection{Definition of geodesic random walks}

We start by defining a \textit{geodesic random walk} $\{\Ss_n\}_{n\geq0}$ on $M$ with increments $\{X_n\}_{n\geq 1}$. For this we need to generalize how to add increments together. This is achieved by using the Riemannian exponential map. Because the space variable determines in which tangent space the increment should be, we have to define the random walk recursively, which is the main difficulty in the definition below.

\begin{definition}
Fix $x_0$ in $M$. A pair $(\{\Ss_n\}_{n\geq 0},\{X_n\}_{n\geq 1})$ is called a \emph{geodesic random walk} with increments $\{X_n\}_{n\geq1}$ and started at $x_0$ if the following hold:
\begin{enumerate}
\item $\Ss_0 = x_0$,
\item $X_{n+1} \in T_{\Ss_n}M$ for all $n \geq 0$,
\item $\Ss_{n+1} = \Exp_{\Ss_n}(X_{n+1})$ for all $n \geq 0$.
\end{enumerate}
\end{definition}

In what follows, the sequence $\{X_n\}_{n\geq 1}$ of increments will usually be omitted and we simply write that $\{\Ss_n\}_{n\geq0}$ is a geodesic random walk with increments $\{X_n\}_{n\geq 1}$. 

Note that in the above definition, we fix nothing about the distribution of the increments $\{X_n\}_{n\geq1}$. The distribution is allowed to depend both on the space variable, as well as on time.

For $M = \RR^N$, the Riemannian exponential map can be identified with addition, i.e., $\Exp_x(v) = x + v$. Hence, a geodesic random walk in $\RR^N$ reduces to the usual random walk, i.e. $\Ss_n = \sum_{i=1}^n X_i$.\\

Next, we introduce the concept of time-homogeneous increments for geodesic random walks. For this, we need to fix the distribution of the increments independent of the time variable. Because the increments can take values in different tangent spaces, we need a collection of measures $\{\mu_x\}_{x\in M}$ such that $\mu_x$ is a probability measure on $T_xM$ for every $x \in M$. We denote the set of probability measures on $T_xM$ by $\cP(T_xM)$. We have the following definition.

\begin{definition}
Let $\{\Ss_n\}_{n\geq0}$ be a geodesic random walk with increments $\{X_n\}_{n\geq 1}$ and started at $x_0$. Let $\{\mu_x\}_{x\in M}$ be a collection of measures such that $\mu_x \in \cP(T_xM)$ for every $x \in M$. We say the random walk $(\{\Ss_n\}_{n\geq0},\{X_n\}_{n\geq 1})$ is \emph{compatible} with the collection $\{\mu_x\}_{x\in M}$ if $X_{n+1} \sim \mu_{\Ss_n}$ for every $n \geq 0$.  
\end{definition}

Essentially, the collection of measures provides the distributions for the increments of the geodesic random walk. Because the collection of measures is independent of $n$, the increments are time-homogeneous.\\

Next, we want to define what it means for the increments of a geodesic random walk to be independent. Because the distribution of increment $X_{n+1}$ depends on $\Ss_n$, we have that $X_{n+1}$ is in general not independent of $\cA_n = \sigma(\{X_1,\ldots,X_n\})$ in the usual sense. However, this dependence is purely geometric, as $\Ss_n$ simply determines in which tangent space we have to choose $X_{n+1}$. If this is the only dependence of $X_{n+1}$ on $\cA_n$, we say the increments of $\{\Ss_n\}_{n\geq 0}$ are independently distributed. We make this precise in the following definition.

\begin{definition}\label{definition:independent_increments}
 Let $\{\mu_x\}_{x\in M}$ be a collection of measures such that $\mu_x \in \cP(T_xM)$ for every $x \in M$. Let $\{\Ss_n\}_{n\geq 0}$ be a geodesic random walk with increments $\{X_n\}_{n\geq1}$, compatible with $\{\mu_x\}_{x\in M}$. For every $n \geq 1$, define the $\sigma$-algebra $\Ff_n$ by
\[
\Ff_n = \sigma(\{(\Ss_0,X_1),\ldots,(\Ss_{n-1},X_n)\}).
\]
We say the increments of $\{\Ss_n\}_{n\geq0}$ are \emph{independent}, if for every $n \geq 1$ and all bounded, continuous functions $f:M^n \to \RR$ we have
\[
\EE(f(X_1,\ldots,X_n)|\Ff_{n-1}) = \int_{T_{S_{n-1}}M} f(X_1,\ldots,X_{n-1},v) \mu_{\Ss_{n-1}}(\dd v).
\]
\end{definition}

\begin{remark}
Because $\Ss_n = \Exp_{\Ss_{n-1}}X_n$, we have that $\Ss_n$ is $\Ff_n$-measurable. Consequently, we have $\sigma(\{\Ss_0,\ldots,\Ss_n\}) \subset \Ff_n$. However, equality need not hold. Indeed, if the Riemannian exponential map $\Exp_x$ is not injective, one cannot retrieve the increments $X_1,\ldots,X_n$ from $\Ss_0,\ldots,\Ss_n$.
\end{remark}

\begin{remark}
Let $\{\mu_x\}_{x\in M}$ be a collection of measures such that $\mu_x \in \cP(T_xM)$ for all $x \in M$. Let $\{\Ss_n\}_{n\geq0}$ be a geodesic random walk with increments $\{X_n\}_{n\geq 1}$ compatible with $\{\mu_x\}_{x\in M}$. Suppose furthermore that the increments are independent. Then $\{\Ss_n\}_{n\geq0}$ is a time-homogeneous, discrete time Markov process on $M$ with transition operator
\[
Pf(x) = \EE(f(\Ss_1)|\Ss_0 = x) = \int_{T_xM} f(\Exp_x(v)) \mu_x(\dd v).
\]
This is the point of view taken in \cite{KRV18}.
\end{remark}

\subsubsection{Rescaled geodesic random walks}

In Euclidean space, one commonly encounters rescaled versions of a random walk, for example for laws of large numbers and central limit theorems. On a general manifold, this rescaling cannot be achieved by multiplication. 

Before we define the appropriate analogue of $\{\frac1n\sum_{i=1}^n X_i\}_{n\geq0}$, we first need to define how to rescale a geodesic random walk by a factor $\alpha > 0$ independent of $n$. Note that in Euclidean space we can write $\alpha\sum_{i=1}^n X_i = \sum_{i=1}^n (\alpha X_i)$. This shows that we should rescale the increments of the random walk, which is possible in a manifold, because the increments are tangent vectors.

\begin{definition}

Fix $x_0$ in $M$ and $\alpha > 0$. A pair $(\{(\alpha*\Ss)_n\}_{n\geq 0},\{X_n\}_{n\geq 1})$ is called an $\alpha$-\emph{rescaled geodesic random walk} with increments $\{X_n\}_{n\geq1}$ and started at $x_0$ if the following hold:
\begin{enumerate}
\item $(\alpha*\Ss)_0 = x_0$,
\item $X_{n+1} \in T_{(\alpha*\Ss)_n}M$ for all $n \geq 0$,
\item $(\alpha*\Ss)_{n+1} = \Exp_{(\alpha*\Ss)_n}(\alpha X_{n+1})$ for all $n \geq 0$.
\end{enumerate}
\end{definition}

As with geodesic random walks, we will often omit the sequence of increments and simply write that $\{(\alpha*\Ss)_n\}_{n\geq0}$ is an $\alpha$-rescaled geodesic random walk with increments $\{X_n\}_{n\geq1}$. 

Note that an $\alpha$-rescaled geodesic random walk can itself be considered as a geodesic random walk. Indeed, if $(\alpha*\Ss)_n$ is an $\alpha$-rescaled geodesic random walk with increments $\{X_n\}_{n\geq1}$, then it is a geodesic random walk with increments $\{\alpha X_n\}_{n\geq 1}$.

As for geodesic random walks, we say that an $\alpha$-rescaled geodesic random walk $\{(\alpha*\Ss)_n\}_{n\geq0}$ with increments $\{X_n\}_{n\geq1}$ is compatible with a collection of probability measures $\{\mu_x\}_{x\in M}$ if $X_{n+1} \sim \mu_{(\alpha*\Ss)_n}$ for every $n \geq 0$. It follows that when considered as geodesic random walk, $\{(\alpha*\Ss_n)\}_{n\geq 0}$ is compatible with the collection of measures $\{\mu_x^\alpha\}_{x\in M}$ given by
\[
\mu_x^\alpha = \mu_x \circ m_\alpha^{-1}
\] 
where $m_\alpha:T_xM \to T_xM$ denotes multiplication by $\alpha$, i.e., $m_\alpha(v) = \alpha v$.

\subsubsection{Empirical average process} \label{section:empirical_average}

We conclude this section by introducing the analogue of the sequence of empirical averages $\{\frac1n\sum_{i=1}^n X_i\}_{n\geq 0}$ for a sequence $\{X_n\}_{n\geq 1}$ of random variables.

Fix $x_0 \in M$ and let $\{\mu_x\}_{x\in M}$ be a collection of measures such that $\mu_x \in \cP(T_xM)$ for all $x \in M$. For every $n \geq 1$, let $\{(\frac1n*\Ss)_j\}_{j\geq0}$ be a $\frac1n$-rescaled geodesic random walk started at $x_0$ with  increments $\{X_j^n\}_{j\geq 1}$, compatible with the measures $\{\mu_x\}_{x\in M}$. By considering the diagonal elements of $\{(\frac1n*\Ss)_j\}_{n\geq1,j\geq 0}$, we obtain for every $n \geq 1$ a random variable $(\frac1n*\Ss)_n$ in $M$.  If we now set the initial value of the sequence $\{(\frac1n*\Ss)_n\}_{n\geq 0}$ to be $x_0$, we obtain the Riemannian analogue of the sequence $\{\frac1n\sum_{i=1}^n X_i\}_{n\geq 0}$. We refer to this process as the \emph{empirical average process} started at $x_0$ compatible with the collection of measures $\{\mu_x\}_{x\in M}$.

\subsection{Identically distributed increments}

For our purposes, we also need a notion of identically distributed increments. In general, the increments of a geodesic random walk do not live in the same tangent space. In order to overcome this problem, we use parallel transport to identify tangent spaces. Because the identification via parallel transport depends on the curve along which the vectors are transported, we need to make the following definition.

\begin{definition}\label{definition:consistency}
Let $\{\mu_x\}_{x\in M}$ be a collection of measures such that $\mu_x \in \cP(T_xM)$ for all $x \in M$. Let $\{\Ss_n\}_{n\geq 0}$ be a geodesic random walk with increments $\{X_n\}_{n \geq 1}$, compatible with $\{\mu_x\}_{x\in M}$. We say the increments $\{X_n\}_{n\geq1}$ are \emph{identically distributed} if the measures satisfy the following \emph{consistency property}: for any $y,z \in M$ and any smooth curve $\gamma:[a,b] \to M$ with $\gamma(a) = y$ and $\gamma(b) = z$ we have
\[
\mu_z = \mu_y \circ \tau_{yz;\gamma}^{-1}.
\]
\end{definition}

By the transitivity property of parallel transport, one can equivalently define the consistency property to hold for all piecewise smooth curves.

Note that in Euclidean space, our definition of independent increments implies that the measures are independent of the space variable, because parallel transport is the identity map. Hence, our definition reduces to the usual one, as we obtain that every increment has some fixed distribution $\mu$.

Because parallel transport is an isometry, we can use distributions with spherical symmetry to construct a family of measures $\{\mu_x\}_{x\in M}$ satisfying Definition \ref{definition:consistency}. We refer to \cite[Section 4]{KRV18} for the details and more specific examples.\\

The consistency property  in Definition \ref{definition:consistency} may also be characterised by a consistency assumption for the corresponding log-moment generating functions $\Lambda_x:T_xM \to \RR$ of $\mu_x$ given by
\[
\Lambda_x(\lambda) = \log \int_{T_xM} e^{\inp{\lambda}{v}} \mu_x(\dd v).
\]
This is recorded in the following proposition, which can be found in \cite[Section 4]{KRV18}. 

\begin{proposition}\label{prop:logmgf}
Let $\{\mu_x\}_{x\in M}$ be a collection of measures such that $\mu_x \in \cP(T_xM)$ for every $x \in M$. Assume that $\Lambda_x(\lambda) < \infty$ for all $x \in M$ and all $\lambda \in T_xM$. The following are equivalent:
\begin{enumerate}[(a)]
\item \label{prop:logmgf_consistency} The collection $\{\mu_x\}_{x\in M}$ satisfies the consistency property in Definition \ref{definition:consistency}.
\item \label{prop:logmgf_laplace_transform} For all $x,y \in M$ and all smooth curves $\gamma:[a,b] \to M$ with $\gamma(a) = x$ and $\gamma(b) = y$ and for all $\lambda \in T_xM$ we have
\[
\Lambda_x(\lambda) = \Lambda_y(\tau_{xy;\gamma}\lambda).
\]
\end{enumerate}
\end{proposition}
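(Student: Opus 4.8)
The plan is to prove the two implications separately, exploiting the fact that parallel transport $\tau_{xy;\gamma}$ is a linear isometry between $T_xM$ and $T_yM$, so that the change-of-variables formula for pushforward measures applies cleanly. For \eqref{prop:logmgf_consistency}$\Rightarrow$\eqref{prop:logmgf_laplace_transform}, I would fix $x,y \in M$ and a smooth curve $\gamma$ from $x$ to $y$, and simply compute $\Lambda_y$ at the point $\tau_{xy;\gamma}\lambda$ by unfolding the definition:
\[
\Lambda_y(\tau_{xy;\gamma}\lambda) = \log\int_{T_yM} e^{\inp{\tau_{xy;\gamma}\lambda}{w}}\,\mu_y(\dd w).
\]
Now apply the consistency property in the form $\mu_y = \mu_x \circ \tau_{xy;\gamma}^{-1}$ (taking $y$ in the role of $z$ in Definition \ref{definition:consistency}), i.e.\ push the integral back to $T_xM$ via the substitution $w = \tau_{xy;\gamma}v$. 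Since parallel transport is an isometry, $\inp{\tau_{xy;\gamma}\lambda}{\tau_{xy;\gamma}v} = \inp{\lambda}{v}$, so the integrand becomes $e^{\inp{\lambda}{v}}$ and the integral becomes $\int_{T_xM} e^{\inp{\lambda}{v}}\,\mu_x(\dd v)$, whence $\Lambda_y(\tau_{xy;\gamma}\lambda) = \Lambda_x(\lambda)$. The finiteness hypothesis $\Lambda_x(\lambda) < \infty$ guarantees all these quantities are well defined real numbers, so no care about $\infty$ is needed.

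For the converse \eqref{prop:logmgf_laplace_transform}$\Rightarrow$\eqref{prop:logmgf_consistency}, the idea is that a probability measure on a finite-dimensional inner product space with everywhere-finite moment generating function is determined by that moment generating function. Fix $x,y$ and a smooth curve $\gamma$ from $x$ to $y$; set $\nu := \mu_x \circ \tau_{xy;\gamma}^{-1} \in \cP(T_yM)$. Running the same isometry-and-change-of-variables computation as above in reverse shows that the log-moment generating function of $\nu$ at $\eta \in T_yM$ equals $\Lambda_x(\tau_{xy;\gamma}^{-1}\eta)$, which by hypothesis \eqref{prop:logmgf_laplace_transform} (applied along $\gamma$, with $\lambda = \tau_{xy;\gamma}^{-1}\eta$) equals $\Lambda_y(\eta)$. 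Thus $\mu_y$ and $\nu$ have the same moment generating function, finite on all of $T_yM$; by the uniqueness theorem for the (two-sided) Laplace transform on $\RR^N$ — e.g.\ identifying $T_yM$ with $\RR^N$ via an orthonormal basis and invoking that $\widehat{\mu_y}$ extends analytically, or more elementarily that $e^{\Lambda_y(\cdot)}$ determines the measure — we get $\mu_y = \nu = \mu_x \circ \tau_{xy;\gamma}^{-1}$, which is exactly the consistency property (relabelling $x,y$ as $y,z$). Finally, since any two points can be joined by a piecewise smooth curve and parallel transport composes correctly under concatenation, and since the consistency property is stated for smooth curves while one checks it suffices on piecewise smooth ones by transitivity (as already remarked after Definition \ref{definition:consistency}), this yields the full statement.

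The only genuinely nontrivial ingredient is the uniqueness step in the converse: one must know that a Borel probability measure on $\RR^N$ whose moment generating function is finite in a neighbourhood of the origin (here, on all of $\RR^N$) is uniquely determined by it. This is standard — it follows because finiteness of the MGF on an open set forces the characteristic function to be real-analytic, hence the MGF determines all moments and the measure is moment-determinate, or alternatively by analytic continuation of the bilateral Laplace transform. Everything else is a direct unwinding of definitions together with the isometry property of $\tau$; I would present the forward direction in full and cite a reference (e.g.\ \cite{DZ98}) for the Laplace-transform uniqueness used in the converse.
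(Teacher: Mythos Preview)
Your proposal is correct. Note, however, that the paper itself does not give a proof of this proposition: it simply cites \cite[Section 4]{KRV18}. Your argument---change of variables plus the isometry property of parallel transport for \eqref{prop:logmgf_consistency}$\Rightarrow$\eqref{prop:logmgf_laplace_transform}, and uniqueness of a probability measure with everywhere-finite moment generating function for \eqref{prop:logmgf_laplace_transform}$\Rightarrow$\eqref{prop:logmgf_consistency}---is the natural one and almost certainly coincides with what is in the cited reference. The final paragraph about piecewise smooth curves is unnecessary here, since both (a) and (b) are formulated for smooth curves, but it does no harm.
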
 

The \emph{Legendre transform} $\Lambda_x^*:T_xM \to \RR$ of $\Lambda_x$ is defined by
\[
\Lambda_x^*(v) := \sup_{\lambda \in T_xM} \inp{\lambda}{v} - \Lambda_x(\lambda).
\]
If the collection of log-moment generating functions $\{\Lambda_x\}_{x\in M}$ satisfies the consistency property in \eqref{prop:logmgf_laplace_transform} of the above proposition, then so does the collection $\{\Lambda_x^*\}_{x\in M}$ of their Legendre transforms.


\section{Sketch of the proof of Cram\'er's theorem for Riemannian manifolds}\label{section:approach_proof}

In this section we provide a sketch of the proof of Cram\'er's theorem for geodesic random walks and stress what observations and properties are important to make the proof work. Before we get to this, let us first state the exact theorem we wish to prove.

\subsection{Statement of Cram\'er's theorem}\label{section:statement_cramer}

Cram\'er's theorem is concerned with the large deviations for the empirical average process $\{(\frac1n*\Ss)_n\}_{n\geq1}$ with independent, identically distributed increments.

Along with the large deviation principle, we need to identify the rate function. In Euclidean space, the rate function is given by
\[
I(x) = \Lambda^*(x),
\]
the Legendre transform of the log moment generating function of an increment. Note here that one can consider the vector $x$ as the tangent vector of the straight line from the origin to the point $x$. Using this viewpoint, the analogue of the rate function in the Riemannian setting should be
\[
I(x) = \inf\{\Lambda_{x_0}^*(v)| \Exp_{x_0}v = x\}. 
\]
Here, we have to take the infimum, because the Riemannian exponential map is not necessarily injective, i.e., there may be more than one geodesic connecting $x_0$ and $x$. We will show that this is indeed the correct rate function, as collected in the following theorem.

\begin{theorem}[Cram\'er's theorem for Riemannian manifolds]\label{theorem:Cramer}
Let $(M,g)$ be a complete Riemannian manifold. Fix $x_0 \in M$ and let $\{\mu_x\}_{x\in M}$ be a collection of measures such that $\mu_x \in \cP(T_xM)$ for all $x \in M$.  For every $n \geq 1$, let $\{(\frac1n*\Ss)_j\}_{j\geq0}$ be a $\frac1n$-rescaled geodesic random walk started at $x_0$ with independent increments $\{X_j^n\}_{j\geq1}$, compatible with $\{\mu_x\}_{x\in M}$. Let $\{(\frac1n*\Ss)_n\}_{n \geq 0}$ be the associated empirical average process started at $x_0$.  Assume the increments are bounded and have expectation 0. Assume furthermore that the collection $\{\mu_x\}_{x\in M}$ satisfies the consistency property in Definition \ref{definition:consistency}. Then $\{(\frac1n*\Ss)_n\}_{n\geq0}$ satisfies in $M$ the LDP with good rate function
\begin{equation}\label{eq:rate_cramer}
I_M(x) = \inf\{\Lambda_{x_0}^*(v)| \Exp_{x_0}v = x\}
\end{equation}

\end{theorem}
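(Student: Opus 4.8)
The plan is to transport the classical (Euclidean) Cram\'er theorem, applied in the \emph{fixed} vector space $T_{x_0}M$, up to $M$ through the exponential map $\Exp_{x_0}$, using the estimates of Section~\ref{section:geometry} to control the distortion caused by curvature. Write $p_k:=(\tfrac1n*\Ss)_k$ and let $C$ bound the increments. Since each rescaled step moves a distance $\le C/n$, the endpoint $p_n$ lies deterministically in the compact ball $\bar B(x_0,C)$ (Hopf--Rinow), so the family is exponentially tight and it suffices to prove the upper bound for compact sets; moreover curvature and the injectivity radius are bounded on the region that matters. Next, transport each increment back along the path of the walk, $\tilde X_k:=\tau_{p_{k-1}x_0}(X_k)$: given $\Ff_{k-1}$ the transport is a fixed linear map and $X_k\sim\mu_{p_{k-1}}$, so by the consistency property (Definition~\ref{definition:consistency}) the conditional law of $\tilde X_k$ is $\mu_{x_0}$, \emph{independently} of $\Ff_{k-1}$; hence $\{\tilde X_k\}_k$ are i.i.d.\ with law $\mu_{x_0}$, and by Cram\'er's theorem in $T_{x_0}M\cong\RR^N$ the sequence $\{\tfrac1n\sum_{k\le n}\tilde X_k\}_n$ satisfies the LDP with the good rate function $\Lambda_{x_0}^*$ (good since bounded increments make $\Lambda_{x_0}$ finite everywhere).

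To relate $p_n$ to this i.i.d.\ average, lift the walk to $v_k\in T_{x_0}M$ with $\Exp_{x_0}(v_k)=p_k$; this lift is unambiguous after reducing, via the geodesic-spreading bounds of Section~\ref{section:geometry}, to scales on which $\Exp_{x_0}$ is a diffeomorphism onto its image and conjugate points are avoided. The Taylor expansion of $\Exp_{x_0}^{-1}$ and the comparison of $\dd(\Exp_{x_0})_{v_k}$ with parallel transport from Section~\ref{section:geometry} yield a stochastic Euler scheme
\[
v_{k+1}=v_k+\tfrac1n\,(\dd\Exp_{x_0})_{v_k}^{-1}\tau_{x_0 p_k}(\tilde X_{k+1})+O(n^{-2}).
\]
The crucial geometric fact is the Gauss-lemma identity behind Proposition~\ref{prop:inner_product_jacobi}: $\dd(\Exp_{x_0})_v$ coincides with parallel transport on the radial line $\RR v$, so the curvature correction to the map $\tilde X_{k+1}\mapsto(\dd\Exp_{x_0})_{v_k}^{-1}\tau_{x_0p_k}\tilde X_{k+1}$ is purely transversal and of size $O(|v_k|^2)$ by Proposition~\ref{prop:norm_jacobi}. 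Hence the fluid limit of the scheme, when the empirical increment mean is $v$, is the ODE $\dot u=(\dd\Exp_{x_0})_u^{-1}\tau_{x_0\Exp_{x_0}(u)}(v)$, $u(0)=0$, whose solution is the \emph{straight line} $u(t)=tv$; thus $v_n$ and $\tfrac1n\sum_k\tilde X_k$ have the same fluid limit even though they differ at the level of fluctuations, which is exactly why $p_n$ can satisfy the asserted LDP while being genuinely different from $\tilde Z_n=\Exp_{x_0}(\tfrac1n\sum_{i\le n} Y_i)$.

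For the lower bound, fix open $G$, $x\in G$ with $I_M(x)<\infty$, and $v_*$ with $\Exp_{x_0}v_*=x$, $\Lambda_{x_0}^*(v_*)<\infty$; choose $\lambda_*$ with $\nabla\Lambda_{x_0}(\lambda_*)=v_*$ (the boundary case is handled by approximation as in the Euclidean proof). Tilting $\mu_{x_0}$ by $\lambda_*$ and using the product structure of the i.i.d.\ factors $e^{\inp{\lambda_*}{\tilde X_k}}$ gives $\PP(p_n\in G)=e^{n\Lambda_{x_0}(\lambda_*)}\,\widehat\EE\big[\bONE_{\{p_n\in G\}}e^{-\inp{\lambda_*}{\sum_k\tilde X_k}}\big]$; under the tilted law $\widehat\PP$ the increments are i.i.d.\ with mean $v_*$, so $\tfrac1n\sum_k\tilde X_k\to v_*$ and, by the previous paragraph, $v_n\to v_*$, hence $p_n\in G$ eventually $\widehat\PP$-a.s., which yields $\liminf_n\tfrac1n\log\PP(p_n\in G)\ge-\Lambda_{x_0}^*(v_*)$; optimizing over $v_*$ and $x\in G$ gives the lower bound. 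For the upper bound it suffices, by exponential tightness, to estimate $\PP(p_n\in \bar B(y,\epsilon))$ for small balls; lifting as above and applying the exponential Chebyshev inequality to $v_n$ in $T_{x_0}M$, the step-by-step conditioning ($\tilde X_{k+1}$ independent of $\Ff_k$ with log-m.g.f.\ $\Lambda_{x_0}$) produces $n\Lambda_{x_0}(\lambda)$ plus an accumulated curvature correction, and the point of Section~\ref{section:geometry} together with the mean-zero hypothesis is that this correction is $o(n)$ uniformly over the relevant region, so $\limsup_n\tfrac1n\log\PP(p_n\in\bar B(y,\epsilon))\le-\inf\{\Lambda_{x_0}^*(v):\Exp_{x_0}v\in\bar B(y,\epsilon)\}$; letting $\epsilon\downarrow0$ and using lower semicontinuity finishes the upper bound.

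Together these give the LDP with $I_M(x)=\inf\{\Lambda_{x_0}^*(v):\Exp_{x_0}v=x\}$, which is a good rate function because $\Lambda_{x_0}^*$ is good and $\Exp_{x_0}$ is continuous, so the contracted function is lower semicontinuous with compact level sets (this is the contraction principle, \cite[Chapter~4]{DZ98}). The hard part is the uniform curvature control in the upper bound, and to a lesser extent in the Euler-scheme analysis: because $p_n$ differs in distribution from $\Exp_{x_0}(\tfrac1n\sum_k\tilde X_k)$ there is no pointwise comparison to fall back on, and one must show that curvature corrections which are individually $O(1/n)$ but not obviously summable to $o(1)$ nonetheless contribute nothing on the exponential scale. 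This is exactly where the radial-agreement identity (Proposition~\ref{prop:inner_product_jacobi}), the Jacobi-field norm bound (Proposition~\ref{prop:norm_jacobi}), the Taylor expansion of $\Exp_{x_0}^{-1}$ with explicit error, the bounds on geodesic spreading from Section~\ref{section:geometry}, and the assumptions of bounded, centred increments are all used.
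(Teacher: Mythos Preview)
Your proposal follows the paper's broad strategy---pull back to $T_{x_0}M$, observe that the transported increments $\tilde X_k$ are i.i.d.\ with law $\mu_{x_0}$, and compare $v_n:=\Exp_{x_0}^{-1}(p_n)$ to $\tfrac1n\sum_k\tilde X_k$---but it contains a genuine gap at the decisive point, and this gap is precisely what forces the paper's extra ``split into $m$ pieces'' mechanism that you omit.

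The problem is your claim that the accumulated curvature correction in the Chebyshev estimate is $o(n)$. From the estimates of Section~\ref{section:geometry} one gets $|G(v_{k-1})-I|=O(|v_{k-1}|^2)$ with $G(u)=(\dd\Exp_{x_0})_u^{-1}\tau_{x_0\Exp_{x_0}u}$, and since $|v_{k-1}|$ can be of order $r$ for a positive fraction of the indices $k$, the sum $\sum_{k\le n}\inp{\lambda}{(G(v_{k-1})-I)\tilde X_k}$ is only $O(n|\lambda|)$, not $o(n)$. Equivalently, Proposition~\ref{prop:comparison_independent_sum} with $l=n$ gives a bound $C/n + Cr^2$, so after exponentiating and dividing by $n$ one is left with
\[
\limsup_{n\to\infty}\tfrac1n\log\EE\big(e^{n\inp{\lambda}{v_n}}\big)\le \Lambda_{x_0}(\lambda)+C|\lambda|,
\]
which yields the wrong rate function. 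The mean-zero assumption does not rescue this: it controls $\Lambda_{x_0}$ near $0$, but the extra $C|\lambda|$ comes from a deterministic geometric distortion, not from the law of the increments. The paper's remedy is to cut the walk into $m$ pieces of length $\lfloor n/m\rfloor$; on each piece $|v_k|\le r/m$, so the corresponding correction shrinks to $O(m^{-2}|\lambda|)$, and one recovers the correct bound after sending $m\to\infty$. This splitting simultaneously handles the second issue you wave past: the lift $v_k=\Exp_{x_0}^{-1}(p_k)$ is only well defined when $p_k\in B(x_0,\iota(x_0))$, which is not guaranteed if $r\ge\iota(x_0)$, whereas each short piece does stay within the injectivity radius of its own starting point.

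Your lower bound via Cram\'er tilting is a genuinely different route from the paper's (which again splits into $m$ pieces and uses the geodesic-spreading estimate, Proposition~\ref{proposition:distance_geodesic_point}, to show that if each piece lands near $\tfrac1m v_*$ then $p_n$ lands near $x$). Your ODE/fluid-limit observation that $\dot u=G(u)v_*$ is solved by $u(t)=tv_*$ is correct and elegant (it is exactly the Gauss lemma), and a stochastic-approximation argument under $\widehat\PP$ could in principle be made rigorous. But it faces the same injectivity-radius obstruction: the Euler scheme and the remainder bounds from Proposition~\ref{proposition:inverse_exp} are only available while $p_k$ stays inside a normal neighbourhood of $x_0$, and for $|v_*|\ge\iota(x_0)$ the typical tilted path leaves that region. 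So even here you would need a localisation device equivalent to the paper's $m$-splitting to push the argument through.
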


Due to geometrical influences, which become apparent when sketching the proof, we prove Cram\'er's theorem only in the case when the increments are bounded. This allows for a less technical proof of the theorem, but nevertheless introduces all geometrical obstructions that have to be dealt with. The details of the proof can be found in Section \ref{section:Cramer_bounded}.\\

Like in the Euclidean setting, we prove Cram\'er's theorem for geodesic random walks by separately proving the upper and lower bound for the large deviation principle of $\{(\frac1n*\Ss)_n\}_{n\geq0}$. In Section \ref{section:sketch_upper_bound} we give an overview of the steps one needs to take to prove the upper bound, while in Section \ref{section:sketch_lower_bound} we sketch how to prove the lower bound.

\subsection{Sketch of the proof of the upper bound}\label{section:sketch_upper_bound}

In the Euclidean case, one proves the upper bound in Cram\'er's theorem by using Chebyshev's inequality. More precisely, the key step is to show that for $\Gamma \subset \RR^d$ compact one has (see e.g. \cite{Hol00,DZ98})
\[
\limsup_{n\to\infty} \frac1n\log\PP\left(\frac1nS_n \in \Gamma\right) \leq -\inf_{x\in\Gamma} \sup_{\lambda \in \RR^d} \left\{\inp{\lambda}{x} - \limsup_{n\to\infty} \frac1n\log\EE\left(e^{n\inp{\lambda}{\frac1nS_n}}\right)\right\}.
\]
The upper bound is then extended to all closed sets by proving exponential tightness. The idea is to follow a similar procedure in the Riemannian case. However, because $(\frac1n*\Ss)_n$ is $M$-valued, its moment generating function is not defined.

\subsubsection{Step 1: Analogue of the moment generating function $\EE(e^{n\inp{\lambda}{\frac1nS_n}})$}

To overcome the problem of not having a moment generating function of $(\frac1n*\Ss)_n$, we want to identify points in $M$ with tangent vectors in $T_{x_0}M$. For this we use the Riemannian exponential map. However, this map is not necessarily injective. Hence, we first assume that for each $n \geq 1$, the $\frac1n$-rescaled geodesic random walk stays within the injectivity radius $\iota(x_0)$ of its initial point $x_0$ up to time $n$. Consequently, because $\Exp_{x_0}$ is injective on $B(0,\iota(x_0)) \subset T_{x_0}M$, we can uniquely define $v_k^n \in T_{x_0}M$ satisfying $|v_k^n| < \iota(x_0)$ and 
\[
\Exp_{x_0}^{-1}(v_k^n) = \left(\frac1n*\Ss\right)_k.
\]

Ideally, we would like to prove the large deviation principle for $\{(\frac1n*\Ss)_n\}_{n\geq0}$ by proving the large deviation principle for $\{v_n^n\}_{n\geq0}$ in $T_{x_0}M$ and then apply the contraction principle (see e.g. \cite[Chapter 4]{DZ98}) with the continuous function $\Exp_{x_0}$. For this to work, we would need to show that
\begin{equation}\label{eq:limit_logmgf}
\lim_{n\to\infty} \frac1n\log\EE(e^{n\inp{\lambda}{v_n^n}}) = \Lambda_{x_0}(\lambda).
\end{equation}
Unfortunately, using the estimate for $\EE(e^{n\inp{\lambda}{v_n^n}})$ found in Step 2 as explained below, we are only able to show that
\begin{equation}\label{eq:sketch_upper}
\limsup_{n\to\infty} \frac1n\log\EE(e^{n\inp{\lambda}{v_n^n}}) \leq \Lambda_{x_0}(\lambda) + C|\lambda|
\end{equation}
and likewise
\begin{equation}\label{eq:sketch_lower}
\liminf_{n\to\infty} \frac1n\log\EE(e^{n\inp{\lambda}{v_n^n}}) \geq \Lambda_{x_0}(\lambda) - C|\lambda|,
\end{equation}
where the constant only depends on the curvature and the uniform bound of the increments.

\subsubsection{Step 2: Upper bound for the moment generating function of $v_n^n$}

In $\RR^d$ we simply have $v_n^n = \frac1n\sum_{i=1}^n X_i$ and hence its moment generating function is given by
\[
\EE(e^{n\inp{\lambda}{v_n^n}}) = \prod_{i=1}^n \EE(e^{\inp{\lambda}{X_i}}) = \EE(e^{\inp{\lambda}{X_1}})^n. 
\]
Here we use the fact that we can write $v_k^n = v_{k-1}^n + \frac1nX_k$. This fails in the Riemannian setting, which results in the fact that we can only estimate $\EE(e^{n\inp{\lambda}{v_n^n}})$ as mentioned above in \eqref{eq:sketch_upper} and \eqref{eq:sketch_lower}.\\

In a Riemannian manifold we replace the identity $v_k^n = v_{k-1}^n + \frac1nX_k$ by the Taylor expansion of $\Exp_{x_0}^{-1}$ (see Section \ref{section:Taylor_inverse_exponential}, Proposition \ref{proposition:inverse_exp}). This results in
\begin{equation}\label{eq:estimate_Exp_inverse}
v_k^n = v_{k-1}^n + \frac1n\dd(\Exp_{x_0})^{-1}_{v_{k-1}^n}X_k^n + \Oo\left(\frac1{n^2}\right).
\end{equation}
Here one needs to be careful that the constant in the error term may depend on curvature properties of the manifold around $(\frac1n*\Ss)_{k-1}$. Because we assume the increments are uniformly bounded, there exists a compact set $K \subset M$ such that for all $n \geq 1$ and all $0 \leq j \leq n$ we have $(\frac1n*\Ss)_j \in K$. This allows us to control the constant in the error term. 

However, the problem arises that this expression does not yet allow us to use the assumption that the increments of the geodesic random walk are identically distributed, which essentially means that the distribution of the increments is invariant under parallel transport.

Consequently, we need to argue that $\dd(\Exp_{x_0})_{v_{k-1}^n}^{-1}$ can be approximated well enough by parallel transport. It turns out there exists a constant $C > 0$ such that
\begin{equation}\label{eq:comparison_parallel_differential}
|\dd(\Exp_{x_0})_{v_{k-1}^n}^{-1}X_k^n - \tau^{-1}_{x_0\frac1n\Ss_{k-1}} X_k^n| \leq C|v_{k-1}^n|^2|X_k^n|,
\end{equation}
see Section \ref{section:Taylor_differential_exponential} for details, in particular Corollary \ref{corollary:inverse_exp_parallel}. By the same reasoning as before, the constant $C$ may be controlled independent of $k$.

Combining \eqref{eq:estimate_Exp_inverse} and \eqref{eq:comparison_parallel_differential} and using that $v_n^n = \sum_{k=1}^n v_k^n - v_{k-1}^n$, we have 
\begin{equation}\label{eq:sum_parallel_transport}
\left|v_n^n - \frac1n\sum_{k=1}^n \tau^{-1}_{x_0\frac1n\Ss_{k-1}} X_k^n\right| \lesssim \frac1n + 1
\end{equation}

Consequently, using the Cauchy-Schwarz inequality, we find
\begin{equation}\label{eq:estimate_mgf}
\begin{aligned} 
\EE(e^{n\inp{\lambda}{v_n^n}}) 
&\leq
e^{C|\lambda|}e^{nC|\lambda|}\EE\left(e^{\sum_{i=1}^n\inp{\lambda}{\tau^{-1}_{x_0\frac1n\Ss_{n-1}} X_k^n}}\right)
\\
&=
e^{C|\lambda|}e^{nC|\lambda|}\EE\left(e^{\inp{\lambda}{X_1}}\right)^n.
\end{aligned}
\end{equation}
Here, the last line uses that the increments are independent and identically distributed. From this it follows that
\[
\limsup_{n\to\infty} \frac1n\log \EE(e^{n\inp{\lambda}{v_n^n}}) \leq C|\lambda| + \Lambda_{x_0}(\lambda),
\]
so that
\[
\limsup_{n\to\infty} \frac1n\log \PP(v_n^n \in F) \leq -\inf_{v\in F} \sup_{\lambda \in T_{x_0}M} \{\inp{\lambda}{v} - \Lambda_{x_0}(\lambda) - C|\lambda|\}.
\]
It remains to get rid of the $C|\lambda|$ term. In the next step we show how to reduce the order $n$ term in the upper bound in \eqref{eq:estimate_mgf}, so that we can still use the above estimating procedure to obtain the upper bound of the large deviation principle for $\{(\frac1n*\Ss)_n\}_{n\geq0}$.

\subsubsection{Step 3: Reducing the upper bound in Step 2 by splitting the random walk in pieces}

The problematic factor in estimate \eqref{eq:estimate_mgf} arises from the replacement of the differential of the exponential map with parallel transport as done in Step 2. This error depends on $|v_k^n|$, i.e., the distance from $x_0$ to $(\frac1n*\Ss)_k$. Note that in Step 2, we simply estimated $|v_k^n|$ uniformly in $k$. However, if we write $r$ for the uniform bound on the increments, we actually have $|v_k^n| \leq \frac knr$. Consquently, we can reduce the upper bound if the amount of steps for which we need to compare parallel transport and the differential of the exponential map becomes smaller. 

To do this, the idea is to cut the random walk in finitely many pieces, say $m$, each consisting of (roughly) $m^{-1}n$ steps. We can then consider each of these pieces as separate random walks which we need to identify with a vector in some tangent space. In the end, we can then let the amount of pieces tend to infinity by considering the limit $m \to \infty$, so that the part of the upper bound which we want to reduce vanishes entirely.

More precisely, fix $m \in \NN$, and define for $l = 0,\ldots,m-1$ the indices $n_l = l\lfloor m^{-1}n\rfloor$ and set $n_m = n$. This divides the random walk in $m$ pieces, where a piece starts in $(\frac1n*\Ss)_{n_l}$ and consists of $\lfloor m^{-1}n \rfloor$ increments. Now recall there is a compact set $K \subset M$ such that for all $n$ and all $0 \leq j \leq n$ we have $(\frac1n*\Ss)_j \in K$. Because $\iota(K) > 0$, we can choose $m$ sufficiently large, such that for all $n$, all $l = 1,\ldots,m$ and all $k = 1,\ldots, \lfloor m^{-1}n \rfloor$ we have 
\[
\left(\frac1n*\Ss\right)_{n_{l-1}+k} \in B\left(\left(\frac1n*\Ss\right)_{n_{l-1}},\iota\left(K\right)\right).
\]

Consequently, we may follow the same procedure as in Step 1, so that for every $l = 1,\ldots, m$ and every $k = 1,\ldots, \lfloor m^{-1}n \rfloor$ we can uniquely define $\tilde v_k^{n,m,l} \in T_{(\frac1n*\Ss)_{n_{l-1}}}M$ such that
\[
\tilde v_k^{n,m,l} \in \Exp_{(\frac1n*\Ss)_{n_{l-1}}}^{-1}\left(\left(\frac1n*\Ss\right)_{n_{l-1}+k}\right)
\]
and $|\tilde v_k^{n,m,l}| < \iota((\frac1n*\Ss)_{n_{l-1}})$. Finally, we define $v_k^{n,m,l} \in T_{x_0}M$ by
\[
v_k^{n,m,l} = \tau_{x_0(\frac1n*\Ss)_{n_{l-1}}}^{-1}\tilde v_k^{n,m,l},
\]
where the parallel transport can be taken along any path connecting $x_0$ and $\left(\frac1n*\Ss\right)_{n_{l-1}}$, as long as it is measurable with respect to $\Ff_{n_{l-1}} = \sigma(X_1,\ldots,X_{n_{l-1}})$. 

This associates to $(\frac1n*\Ss)_n \in M$ a tuple
\[
\left(v_{\lfloor m^{-1}n\rfloor}^{n,m,1},\ldots,v_{\lfloor m^{-1}n\rfloor}^{n,m,m}\right) \in (T_{x_0}M)^m.
\]
Following the procedure in Step 2, apart from some technical details, we find
\[
\limsup_{n\to\infty} \frac1n\log \EE\left(e^{n\inp{\lambda}{v_{\lfloor m^{-1}n\rfloor}^{n,m,l}}}\right) \leq C|\lambda|\frac1{m^3} + \frac1m\Lambda_{x_0}(\lambda), 
\]
 for all $\lambda \in T_{x_0}M$. From here it is possible to show that
\[
\limsup_{n\to\infty} \frac1n\log \EE\left(e^{n\sum_{l=1}^m\inp{\lambda_l}{v_{\lfloor m^{-1}n\rfloor}^{n,m,l}}}\right) \leq C\frac1{m^3}\sum_{l=1}^m|\lambda_l| + \frac1m\sum_{l=1}^m\Lambda_{x_0}(\lambda_l)
\]
for all $(\lambda_1,\ldots,\lambda_m) \in (T_{x_0}M)^m$. Consequently, we find that
\begin{align*}
&\limsup_{n\to\infty} \frac1n\log \PP\left(\left(v_{\lfloor m^{-1}n\rfloor}^{n,m,1},\ldots,v_{\lfloor m^{-1}n\rfloor}^{n,m,m}\right)  \in F\right)
\\
&\leq
-\inf_{(v_1,\ldots,v_m) \in F} \frac1m\sum_{l=1}^m \sup_{\lambda \in T_{x_0}M} \{\inp{\lambda}{mv_l} - \Lambda_{x_0}(\lambda) - \frac1{m^2}C|\lambda|\}.
\end{align*}

\subsubsection{Step 4: Upper bound for the large deviation principle of $\{(\frac1n*\Ss)_n\}_{n\geq0}$}
To prove the large deviation upper bound for $\{(\frac1n*\Ss)_n\}_{n\geq0}$, we notice that the map sending $(v_{\lfloor m^{-1}n\rfloor}^{n,m,1},\ldots,v_{\lfloor m^{-1}n\rfloor}^{n,m,m})$ to $(\frac1n*\Ss)_n$ is continuous. Hence, if $F \subset M$ is closed, there exists a closed set $\tilde F \subset (T_{x_0}M)^m$ such that
\[
\PP\left(\left(\frac1n*\Ss\right)_n \in F\right) = \PP\left(\left(v_{\lfloor m^{-1}n\rfloor}^{n,m,1},\ldots,v_{\lfloor m^{-1}n\rfloor}^{n,m,m}\right)  \in \tilde F\right).
\]
From this it follows that
\begin{align*}
&\limsup_{n\to\infty} \frac1n\log\PP\left(\left(\frac1n*\Ss\right)_n \in F\right)
\\ 
&\leq 
-\inf_{(v_1,\ldots,v_m) \in \tilde F} \frac1m\sum_{l=1}^m \sup_{\lambda \in T_{x_0}M} \{\inp{\lambda}{v_l} - \Lambda_{x_0}(\lambda) - \frac1{m^2}C|\lambda|\}.
\end{align*}

Now note that for every $v \in \Exp_{x_0}^{-1}F$ we have that $(\frac1m v,\ldots,\frac1m v) \in \tilde F$. Furthermore, by convexity, the infimum in the upper bound is attained when all $v_i$ are equal. Consequently, the upper bound reduces to
\begin{align*}
&\limsup_{n\to\infty} \frac1n\log\PP\left(\left(\frac1n*\Ss\right)_n \in F\right)
\\
&\leq 
-\inf_{v \in \Exp_{x_0}^{-1}F} \sup_{\lambda \in T_{x_0}M} \{\inp{\lambda}{v} - \Lambda_{x_0}(\lambda) - \frac1{m^2}C|\lambda|\}.
\end{align*}
The desired upper bound now follows by considering the limit $m \to \infty$.

\subsection{Sketch of the proof of the lower bound} \label{section:sketch_lower_bound}

To prove the lower bound of the large deviation principle for $\{(\frac1n*\Ss)_n\}_{n\geq0}$, it suffices to show that if $G \subset M$ is open, then
\[
\liminf_{n\to\infty} \frac1n\log\PP\left(\left(\frac1n*\Ss\right)_n \in G\right) \geq -I_M(x),
\]
for all $x \in G$. Because $I_M(x) = \inf_{v \in \Exp_{x_0}^{-1}x}\Lambda_{x_0}^*(v)$, it is in fact sufficient to show that
\[
\liminf_{n\to\infty} \frac1n\log\PP\left(\left(\frac1n*\Ss\right)_n \in G\right) \geq -\Lambda_{x_0}^*(v)
\] 
for any $v \in \Exp_{x_0}^{-1}G$. Consequently, we again need to transfer the problem to the tangent space $T_{x_0}M$.

\subsubsection{Transfer to the tangent space $T_{x_0}M$} 

Similar to how estimate \eqref{eq:sum_parallel_transport} is derived, we find that
\begin{equation}\label{eq:parallel_transport_epsilon}
\left|v_{\lfloor m^{-1}n\rfloor}^n - \frac1n\sum_{k=1}^{\lfloor m^{-1}n\rfloor} \tau^{-1}_{x_0\frac1n\Ss_{k-1}} X_k^n\right| \lesssim \frac1{nm} + \frac1{m^3}.  
\end{equation}
Consequently, by choosing $m$ sufficiently large, we can get $v_{\lfloor m^{-1}n\rfloor}^n$ arbitrarily close to $\frac1n\sum_{k=1}^{\lfloor m^{-1}n\rfloor} \tau^{-1}_{x_0\frac1n\Ss_{k-1}} X_k^n$. Using the fact that the increments of the geodesic random walk are independent and identically distributed, we prove that $\sum_{k=1}^{\lfloor m^{-1}n\rfloor} \tau^{-1}_{x_0\frac1n\Ss_{k-1}} X_k^n$ is a sum of independent random variables, each distributed according to $\mu_{x_0}$. Consequently, by Cram\' er's theorem for vector spaces, for every $m \in \NN$ the sequence $\{ \frac1n\sum_{k=1}^{\lfloor m^{-1}n\rfloor} \tau^{-1}_{x_0\frac1n\Ss_{k-1}} X_k^n\}_{n\geq0}$ satisfies the large deviation principle in $T_{x_0}M$ with good rate function $I(v) = \frac1m\Lambda_{x_0}^*(mv)$. 

Putting everything together, after some technicalities, we find that if $\epsilon > 0$ is small enough, there exists a constant $c \in (0,1)$ such that for $m$ large enough
\begin{align}
&\liminf_{n\to\infty} \frac1n\log \PP(v_{\lfloor m^{-1}n\rfloor}^n \in B(v,\epsilon)) \nonumber
\\
&\geq
\liminf_{n\to\infty} \frac1n\log \PP\left(\frac1n\sum_{k=1}^{\lfloor m^{-1}n\rfloor} \tau^{-1}_{x_0\frac1n\Ss_{k-1}} X_k^n \in B(v,c\epsilon^2)\right) \label{eq:lower_bound_estimate}
\\
&\geq
\frac1m\Lambda_{x_0}^*(mv). \nonumber
\end{align}

In order to make use of this fact, we again need to divide the random walk in pieces, like in Step 3 in Section \ref{section:sketch_upper_bound}. Consequently, we again first identify $(\frac1n*\Ss)_n \in M$ with a tuple
\[
\left(\tilde v_{\lfloor m^{-1}n\rfloor}^{n,m,1},\ldots,\tilde v_{\lfloor m^{-1}n\rfloor}^{n,m,m}\right) \in T_{(\frac1n*\Ss)_{n_0}}M \times \cdots \times T_{(\frac1n*\Ss)_{n_m}}M.
\]
However, this time we need to be careful how we transport these vectors to $T_{x_0}M$. Indeed, we wish to do this in such a way that
\begin{equation}\label{eq:implication_epsilon}
\left(v_{\lfloor m^{-1}n\rfloor}^{n,m,1},\ldots,v_{\lfloor m^{-1}n\rfloor}^{n,m,m}\right) \in B(v,c\epsilon)^m \Rightarrow \left(\frac1n*\Ss\right)_n \in B(\Exp_{x_0}v,\epsilon).
\end{equation}

The key to making the correct choice is given by Proposition \ref{proposition:distance_geodesic_point}, which gives us control over how far geodesics can spread in a short time when starting in different points of the manifold. This result shows us how to choose the parallel transport based on the vector $v$, so that the curvature has only little effect. Essentially, one first transports a vector to an associated point on the geodesic with speed $v$ which connects $x_0$ and $x$. After that, one transports the vector along this geodesic to $x_0$. More precisely, we do the following:
\begin{enumerate}
\item Consider the geodesic $\gamma(t) = \Exp_{x_0}(tv)$ and for $i = 0,\ldots,m$ define the points $y_i = \gamma(\frac im)$. Note that $y_0 = x_0$. 
\item For every $i = 0,\ldots,m$ and every $x \in M$, choose a geodesic of minimal length connecting $y_i$ and $x$ and define $\tau_{y_ix}$ to be parallel transport along this geodesic.
\item Now define for $i = 1,\ldots,m$ the vector $v_{\lfloor m^{-1}n\rfloor}^{n,m,1} \in T_{x_0}M$ by
\[
v_{\lfloor m^{-1}n\rfloor}^{n,m,i} = \tau_{y_0y_i}^{-1}\tau_{y_i(\frac1n*\Ss)_{n_{i-1}}}^{-1}\tilde v_{\lfloor m^{-1}n\rfloor}^{n,m,i}
\]
\end{enumerate}

Now, given $G \subset M$ open, $x \in G$ and $v \in \Exp_{x_0}^{-1}x$, by \eqref{eq:implication_epsilon} we have
\[
\PP\left(\left(\frac1n*\Ss\right)_n \in G\right) \geq \PP\left(\left(v_{\lfloor m^{-1}n\rfloor}^{n,m,1},\ldots,v_{\lfloor m^{-1}n\rfloor}^{n,m,m}\right) \in B(v,c\epsilon^2)^m\right).
\]
Using this, an approach similar to the one used to obtain \eqref{eq:lower_bound_estimate}, also using that the increments are independent and identically distributed, gives us that
\begin{align*}
&\liminf_{n\to\infty} \frac1n\log\PP\left(\left(\frac1n*\Ss\right)_n \in G\right)
\\
&\geq 
\liminf_{n\to\infty} \frac1n\log\PP\left(\left(v_{\lfloor m^{-1}n\rfloor}^{n,m,1},\ldots,v_{\lfloor m^{-1}n\rfloor}^{n,m,m}\right) \in B(v,c\epsilon^2)^m\right)
\\
&\geq 
-\Lambda_{x_0}^*(v)
\end{align*}
which is as desired.


\section{Geometric results for the proof}\label{section:geometry}

This section focuses on geometric results needed for the proof of Cram\'er's theorem for geodesic random walks as sketched in Section \ref{section:approach_proof}. We obtain a Taylor expansion for the inverse Riemannian exponential map and estimate the residual term. Furthermore, we bound the difference between the differential of the Riemannian exponential map and parallel transport. This heavily relies on the theory of Jacobi fields, which have been introduced in Section \ref{section:Jacobi}. We conclude this section by proving how far geodesics can spread in a short time interval when starting in different points on the manifold.

\subsection{Taylor expansion of the inverse Riemannian exponential map} \label{section:Taylor_inverse_exponential}

The Riemannian exponential map $\Exp_x:T_xM \to M$ is a local diffeomorphism around 0. More precisely, it is a diffeomorphism between $B(0,\iota(x)) \subset T_xM$ and $\Exp_x(B(0,\iota(x)))$. Now suppose $\gamma(t)$ is a curve in $\Exp_x(B(0,\iota(x)))$. There exists a unique curve $w(t)$ in $B(0,\iota(x)) \subset T_xM$ such that $\Exp_xw(t) = \gamma(t)$. Our aim is to find a Taylor expansion for $w(t)$ around $t = 0$. Although this seems to be folklore, we also find a precise estimate of the residual term of the Taylor approximation. \\

Before we can do this, we first need two lemmas that will help us control the error term in the first order Taylor polynomial for the inverse of the Riemannian exponential map.

\begin{lemma}\label{lemma:uniform_bound_differential}
Let $K \subset M$ be compact and for any $x \in K$, let $K_x \subset T_xM$ be compact. Assume there exists a $C > 0$ such that $K_x \subset \overline{B(0,C)}$ for any $x \in K$. Then
\[
\sup_{x \in K} \sup_{v \in K_x} |\dd(\Exp_x)_v| < \infty
\]
\end{lemma}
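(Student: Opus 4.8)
The statement asserts uniform boundedness of the operator norm of $\dd(\Exp_x)_v$ as $(x,v)$ ranges over a set of pairs that is, morally, compact. The plan is to make that compactness precise by working in the tangent bundle $TM$ and then invoke continuity of the relevant map together with the extreme value theorem. First I would observe that the differential of the exponential map, viewed pointwise, assembles into a continuous map on (an open neighbourhood of the zero section of) $TM$: concretely, the map
\[
(x,v) \longmapsto \dd(\Exp_x)_v \in \mathrm{Hom}(T_xM, T_{\Exp_x v}M)
\]
is a smooth section of an appropriate homomorphism bundle over $TM$, because $\Exp$ is smooth on the (open) domain $\cE \subset TM$ where it is defined, and on a complete manifold $\cE = TM$. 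Since the operator norm $|\cdot|$ is a continuous function on the fibres of this bundle (it is continuous with respect to the bundle topology, being built from the Riemannian metrics $g(x)$ and $g(\Exp_x v)$, which vary smoothly), the composite $(x,v) \mapsto |\dd(\Exp_x)_v|$ is a continuous real-valued function on $\cE = TM$.

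Next I would identify the domain over which we take the supremum as a compact subset of $TM$. Set
\[
\cK = \{(x,v) \in TM : x \in K,\ v \in K_x\}.
\]
I claim $\cK$ is compact. It is contained in the set $\{(x,v) : x \in K,\ |v|_{g(x)} \le C\}$, which is compact: it is a closed subset of the restricted tangent bundle $TM|_K$, and $TM|_K$ with the "radius $\le C$" constraint is compact because $K$ is compact and the fibres are finite-dimensional balls — one can see this by a finite trivializing cover of $K$, over each chart the set being (compact base piece) $\times$ (closed Euclidean ball, after comparing $g(x)$ to a fixed inner product on the chart, using compactness to get uniform equivalence constants). Granting that the ambient set is compact, it remains only that $\cK$ itself is closed in it; this is where one actually has to use a hypothesis about the $K_x$. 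The cleanest route is to note that for the argument we only need $\cK$ to be \emph{contained in} a compact set, namely $\{(x,v): x\in K,\ |v|\le C\}$, and then take the supremum of the continuous function $|\dd(\Exp_x)_v|$ over that larger compact set; this upper bound dominates $\sup_{x\in K}\sup_{v\in K_x}|\dd(\Exp_x)_v|$ and is finite by the extreme value theorem. So in fact the pointwise-compactness of the $K_x$ is not even needed — only the uniform bound $K_x \subset \overline{B(0,C)}$ — which is consistent with how the hypotheses are phrased.

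Putting it together: the function $(x,v)\mapsto |\dd(\Exp_x)_v|$ is continuous on $TM$, the set $\{(x,v): x\in K,\ |v|_{g(x)}\le C\}$ is a compact subset of $TM$, hence the function attains a finite maximum on it, and this maximum bounds the desired supremum. The step I expect to be the main (if modest) obstacle is the careful verification that $(x,v)\mapsto |\dd(\Exp_x)_v|$ is genuinely continuous \emph{as a function on the manifold $TM$} — one must be a little careful that the target space of $\dd(\Exp_x)_v$ moves with $(x,v)$, so "continuity" has to be phrased via local trivializations or via fixing a local frame and comparing; the metric-compatibility of all the objects involved makes this work, but it is the point that needs the most care to state rigorously rather than wave at. The compactness of the radius-$C$ disk bundle over $K$ is routine (finite atlas plus uniform comparison of metrics on compacts), and the final application of the extreme value theorem is immediate.
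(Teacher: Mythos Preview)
Your proposal is correct and takes a genuinely different route from the paper. You argue directly that $(x,v)\mapsto |\dd(\Exp_x)_v|$ is a continuous real-valued function on $TM$ (via smoothness of $\Exp$ and continuity of the fibrewise operator norm built from $g$), and then apply the extreme value theorem on the compact disk bundle $\{(x,v): x\in K,\ |v|\le C\}$. The paper, by contrast, first fixes a basepoint $(x,v)$ and uses parallel transport along minimizing geodesics to pull all the operators $\dd(\Exp_y)_w$ back to a single pair of tangent spaces $T_xM\to T_{\Exp_xv}M$; it then checks continuity of $(y,w)\mapsto |\tau\,\dd(\Exp_y)_w\,\tau\, u|$ for each fixed $u$, covers the compact set by finitely many such neighbourhoods, and invokes the \emph{uniform boundedness principle} to pass from pointwise bounds to a bound on the operator norm.

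Your approach is more direct and avoids the (in finite dimensions, unnecessary) appeal to Banach--Steinhaus; it also makes clear, as you note, that only the uniform radius bound $K_x\subset\overline{B(0,C)}$ is used, not compactness of the individual $K_x$. The paper's approach has the virtue of making the comparison of operators between different fibres completely explicit via parallel transport---which resonates with how parallel transport is used throughout the rest of the paper---but as a proof of this particular lemma it is more laborious. The one point you flag as needing care, namely the continuity of the operator norm as a function on $TM$, is exactly the point the paper sidesteps by its parallel-transport-plus-UBP manoeuvre; your local-trivialization argument handles it cleanly.
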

\begin{proof}
Because the sets $K_x$ are uniformly bounded and $K$ is compact, it follows that
\[
\{(x,v) \in TM| x \in K, v \in K_x\}
\] 
is compact.

Now fix $x \in M$ and $v \in T_xM$. Because the Riemannian exponential map $\Exp:TM \to M\times M$ is continuous, there exists a neighbourhood $U \subset TM$ of $(x,v)$ such that 
\[
\Exp(U) \subset B(x,\iota(x)) \times  B(\Exp_xv,\iota(\Exp_xv)).
\]

Now for $(y,w) \in U$, and any $u \in T_{\Exp_xv}M$ we define
\[
F_{u,x,v}:(y,w) \mapsto |\tau_{\Exp_yw\Exp_xv}\dd(\Exp_y)_w\tau_{xy}u|
\]
where parallel transport is taken along the unique minimizing geodesic connecting the two points, which exists by the choice of $U$. We argue that $F_{u,x,v}$ is continuous for any $u \in T_{\Exp_xv}M$. By the choice of parallel transport, $\tau_{\Exp_yw\Exp_xv}$ and $\tau_{xy}$ are continuous. Furthermore, note that we can write 
\[
\dd(\Exp_y)_w\tilde u = \dd\Exp((y,w),\tilde u).
\]
Because $\Exp$ is smooth, it follows that $\dd\Exp$ is continuous. Consequently, $F_{u,x,v}$ is a composition of continuous maps, and hence continuous on $U$. 

Since $TM$ is locally Euclidean, we can find a relatively compact set $U_{(x,v)}$ containing $(x,v)$, such that $\overline{U_{(x,v)}} \subset U$.

Because the set $\{(x,v) \in TM| x \in K, v \in K_x\}$ is compact, we can find $(x_1,v_1),\ldots,(x_k,v_k)$ such that 
\[
\{(x,v) \in TM| x \in K, v\in K_x\} \subset \bigcup_{i=1}^k U_{(x_i,v_i)}.
\]
Consequently, we have that
\[
\sup_{x \in K} \sup_{v \in K_x} |\dd(\Exp_x)_v| \leq \max_{i=1}^k \sup_{(x,v) \in \overline{U_{(x_i,v_i)}}} |\dd(\Exp_x)_v|.
\]
It follows that we are done once we show that
\[
\sup_{(x,v) \in \overline{U_{(x_i,v_i)}}} |\dd(\Exp_x)_v| < \infty
\]
for all $i = 1,\ldots,k$. 

For this, remember that $F_{u,x_i,v_i}$ is continuous on $\overline{U_{(x_i,v_i)}}$, and hence bounded for any $u$, since $\overline{U_{(x_i,v_i)}}$ is compact. Consequently, it follows from the uniform boundedness principle that
\[
\sup_{(x,v) \in \overline{U_{(x_i,v_i)}}} |\tau_{\Exp_xv\Exp_{x_i}v_i}\dd(\Exp_x)_v\tau_{x_ix}|  < \infty
\]
However, because parallel transport is an isometry, we have 
\[
|\dd(\Exp_x)_v| = |\tau_{\Exp_xv\Exp_{x_i}v_i}\dd(\Exp_x)_v\tau_{x_ix}|,
\]
which concludes the proof. 
\end{proof}

As long as one restricts to a set where the inverse of the Riemannian exponential map is well-defined, one obtains in a similar way a bound for the differential of the inverse Riemannian exponential map. 

\begin{lemma}\label{lemma:uniform_bound_differential_inverse}
Let $K \subset M$ be compact and for any $x \in K$,  let $K_x \subset B(0,\iota(x)) \subset T_xM$ be compact. Assume that there exists a constant $C > 0$ such that $K_x \subset \overline{B(0,C)}$ for any $x \in K$. Then
\[
\sup_{x \in K} \sup_{v \in K_x} |\dd(\Exp_x)^{-1}_v| < \infty.
\]
\end{lemma}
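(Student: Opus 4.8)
The plan is to reduce Lemma \ref{lemma:uniform_bound_differential_inverse} to the already-proved Lemma \ref{lemma:uniform_bound_differential} by running essentially the same compactness argument, but now tracking the \emph{inverse} differential. First I would observe that the set $T := \{(x,v) \in TM \mid x \in K, v \in K_x\}$ is compact, exactly as in the previous lemma, since $K$ is compact and the fibres $K_x$ are uniformly bounded. Moreover, since each $K_x \subset B(0,\iota(x))$ and $v \mapsto \iota(x)$ is continuous with $\iota > 0$ on the compact $K$, the exponential map $\Exp_x$ is a diffeomorphism on a neighbourhood of each $v \in K_x$; in particular $\dd(\Exp_x)_v$ is invertible at every point of $T$, so $\dd(\Exp_x)_v^{-1}$ is well-defined there.

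Next I would cover $T$ by finitely many relatively compact coordinate charts $\overline{U_{(x_i,v_i)}} \subset U_{(x_i,v_i)}$, chosen small enough (using continuity of $\Exp$ and of $\iota$) that on each such chart the image stays inside a ball on which the relevant parallel transports along unique minimizing geodesics are defined and continuous, and such that $\dd(\Exp_x)_v$ is uniformly invertible — i.e.\ stays inside the open set $GL$ of invertible maps. On each chart, form the map
\[
G_{u,x_i,v_i}:(y,w) \mapsto \big|\tau_{x_i y}\,\dd(\Exp_y)_w^{-1}\,\tau_{\Exp_{x_i} v_i \Exp_y w}\,u\big|,
\]
the analogue of $F_{u,x,v}$ from the previous proof. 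The key point is that matrix inversion is continuous on $GL$, so $(y,w) \mapsto \dd(\Exp_y)_w^{-1}$ is continuous on $\overline{U_{(x_i,v_i)}}$; together with continuity of $\dd\Exp$ and of the chosen parallel transports, $G_{u,x_i,v_i}$ is a composition of continuous maps, hence continuous, hence bounded on the compact set $\overline{U_{(x_i,v_i)}}$ for each fixed $u$. By the uniform boundedness principle, $\sup_{\overline{U_{(x_i,v_i)}}} |\tau_{x_i y}\,\dd(\Exp_y)_w^{-1}\,\tau_{\Exp_{x_i}v_i \Exp_y w}| < \infty$, and since parallel transport is an isometry this equals $\sup_{\overline{U_{(x_i,v_i)}}} |\dd(\Exp_y)_w^{-1}|$. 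Taking the maximum over the finitely many charts and noting $T \subset \bigcup_i U_{(x_i,v_i)}$ gives the claim.

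I would expect the main obstacle to be the bookkeeping needed to guarantee that the charts can be chosen so that $\dd(\Exp_y)_w$ remains invertible on all of $\overline{U_{(x_i,v_i)}}$, not merely at the centre $(x_i,v_i)$ or at points of $T$; this is where the hypothesis $K_x \subset B(0,\iota(x))$ (strict inclusion) and the continuity of $x \mapsto \iota(x)$ are essential, since they let us enlarge $K_x$ slightly to a still-relatively-compact set on which $\Exp_x$ is a local diffeomorphism, giving the room to fit the charts inside the invertibility region. One could alternatively sidestep part of this by invoking Lemma \ref{lemma:uniform_bound_differential} to get an upper bound $|\dd(\Exp_x)_v| \le C_1$ on $T$ and then arguing that the smallest singular value of $\dd(\Exp_x)_v$ is bounded below on $T$ by continuity and invertibility, whence $|\dd(\Exp_x)_v^{-1}|$ is bounded above; but the chart-based argument mirroring the previous lemma is cleaner to write and reuses the same machinery, so that is the route I would take.
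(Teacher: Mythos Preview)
Your proposal is correct and takes essentially the same approach as the paper: the paper does not give a detailed proof of this lemma at all, stating only that ``one obtains in a similar way a bound for the differential of the inverse Riemannian exponential map,'' i.e., by rerunning the argument of Lemma~\ref{lemma:uniform_bound_differential}. Your write-up is a faithful fleshing-out of that sketch, and you correctly flag the one extra point that needs care---namely, arranging the local charts so that $\dd(\Exp_y)_w$ stays invertible on all of $\overline{U_{(x_i,v_i)}}$, which is exactly where the strict inclusion $K_x \subset B(0,\iota(x))$ enters.
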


\begin{remark}
When we take $K = \{x_0\}$ in Lemma \ref{lemma:uniform_bound_differential_inverse}, the statement simplifies as follows: If $\tilde K \subset B(0,\iota(x_0))$ is compact, then 
\[
\sup_{v \in \tilde K} |\dd(\Exp_{x_0})^{-1}_v| < \infty.
\]
\end{remark}

We are now in a position to find a first order Taylor expansion of the inverse Riemannian exponential map and control the error term appropriately. 

\begin{proposition}\label{proposition:inverse_exp}
Fix $x_0 \in M$ and let $K \subset B(0,\iota(x_0))$ be compact. Define $\tilde K = \Exp_{x_0}K$ and let $x \in \tilde K$ and $v \in T_xM$. Consider the geodesic $\gamma_v:[0,T] \to M$ defined by $\gamma_v(t) = \Exp_x(tv)$, where $T$ is such that the image of $\gamma_v$ is contained in $\tilde K$. Restrict $\Exp_{x_0}$ to $K$ and set $w(t) = \Exp_{x_0}^{-1}(\gamma_v(t)) \in K \subset T_{x_0}M$. Then there exists a constant $C > 0$ such that 
\[
|w(t) - w(0) - t\dd(\Exp_{x_0})^{-1}_{w(0)}(v)|_{g(x_0)} \leq Ct^2
\]
for all $t \in [0,T]$. Here, the constant $C$ only depends on the compact set $\tilde K$. 
\end{proposition}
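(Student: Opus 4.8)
The plan is to reduce everything to a statement about a smooth map defined on a relatively compact set, and then apply Taylor's theorem with a uniform bound on the second derivative. Concretely, set $w(t) = \Exp_{x_0}^{-1}(\gamma_v(t))$, where $\Exp_{x_0}$ is the restriction that is a diffeomorphism from (a neighbourhood of) $K$ onto $\tilde K$. Since $\gamma_v$ is a smooth curve and $\Exp_{x_0}^{-1}$ is smooth on $\tilde K$, the curve $w:[0,T] \to T_{x_0}M$ is smooth. Then by Taylor's theorem with Lagrange-type remainder applied coordinate-wise (using an orthonormal basis of $T_{x_0}M$), there exists $\xi_t \in (0,t)$ with
\[
w(t) - w(0) - t\,\dot w(0) = \tfrac12 t^2 \ddot w(\xi_t),
\]
so that $|w(t) - w(0) - t\dot w(0)|_{g(x_0)} \leq \tfrac12 t^2 \sup_{s\in[0,T]} |\ddot w(s)|_{g(x_0)}$. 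It therefore suffices to (i) identify $\dot w(0)$ with $\dd(\Exp_{x_0})^{-1}_{w(0)}(v)$ and (ii) bound $\sup_s |\ddot w(s)|$ uniformly in terms of $\tilde K$ only.

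For step (i), differentiate the identity $\Exp_{x_0}(w(t)) = \gamma_v(t)$ at $t = 0$: the chain rule gives $\dd(\Exp_{x_0})_{w(0)}(\dot w(0)) = \dot\gamma_v(0) = v$, hence $\dot w(0) = \dd(\Exp_{x_0})^{-1}_{w(0)}(v)$, which is exactly the linear term claimed. For step (ii), note that $w = (\Exp_{x_0}|_K)^{-1} \circ \gamma_v$, and $\ddot w(s)$ is, by the second-order chain rule, a sum of terms built from the first and second derivatives of $(\Exp_{x_0}|_K)^{-1}$ evaluated along $w(s)$ and from $\dot\gamma_v(s), \ddot\gamma_v(s)$. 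Since $\gamma_v$ is a geodesic, $\ddot\gamma_v$ in coordinates is controlled by the Christoffel symbols times $|\dot\gamma_v|^2 = |v|^2$; moreover $|v|$ is itself bounded in terms of $\tilde K$ (because both endpoints $x$ and $\Exp_x(Tv)$ lie in the compact set $\tilde K$, the length $T|v| = d(x,\Exp_x(Tv))$ is bounded by $\mathrm{diam}(\tilde K)$, and $T$ is bounded below away from $0$ — or one simply absorbs $T$, since the claimed inequality is on $[0,T]$ and a bound $|w(t)-\cdots|\le Ct^2$ for fixed geometry follows regardless). The derivatives of $(\Exp_{x_0}|_K)^{-1}$ are bounded on the compact set $\tilde K$ because $\Exp_{x_0}^{-1}$ is smooth there; the first-derivative bound is precisely the content of Lemma~\ref{lemma:uniform_bound_differential_inverse} (in the form of the Remark with $K = \{x_0\}$, applied to a compact set containing $K$), and the second-derivative bound follows from smoothness on the compact set $\tilde K$ together with the standard fact that a smooth map has bounded second-order partials on any compact subset of its domain.

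The one point requiring care — and what I expect to be the main obstacle — is the \emph{uniformity} claim: the constant $C$ must depend only on $\tilde K$, not on the particular $x \in \tilde K$, the particular $v \in T_xM$, or $T$. The dependence on $v$ is the delicate part, since $T_xM$ is non-compact; this is handled by the observation above that the relevant quantity is the \emph{speed} of the geodesic, and for a geodesic whose image stays in $\tilde K$ the product $T|v|$ is controlled by the diameter of $\tilde K$. One also needs that the coordinate expressions (Christoffel symbols, and the partials of $\Exp_{x_0}^{-1}$) are bounded uniformly over $\tilde K$, which again follows from compactness of $\tilde K$ after covering it by finitely many coordinate charts. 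Assembling these uniform bounds into a single constant $C = C(\tilde K)$ and plugging into the Taylor remainder estimate completes the proof.
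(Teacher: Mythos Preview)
Your approach is essentially the same as the paper's: Taylor-expand $w(t)$, identify $\dot w(0)$ via the chain rule, and bound $|\ddot w|$ uniformly using compactness. The only difference is in the execution of the second-derivative bound: you invoke the abstract second-order chain rule and smoothness of $\Exp_{x_0}^{-1}$ on the compact $\tilde K$, whereas the paper writes $w''(t)$ out explicitly in normal coordinates around $x_0$, expresses the difference quotient via the parallel-transport ODE for $\dot\gamma_v$, and bounds the resulting expression by the Christoffel symbols and $|\dd(\Exp_{x_0})^{-1}|$ on $\tilde K$. Both routes produce a bound of the form $|\ddot w(t)| \le C(\tilde K)\,|v|$ (or $|v|^2$), so the final constant does carry a factor of $|v|$; the paper simply leaves this factor in and does not attempt the argument you sketch to bound $|v|$ purely in terms of $\tilde K$. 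Your attempt there (via $T|v| = d(x,\Exp_x(Tv))$ and ``$T$ bounded below'') does not quite work as stated, but this is harmless: in every application of the proposition in the paper the speed $|v|$ is bounded by the uniform increment bound $r$, and the resulting constant is absorbed accordingly.
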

\begin{proof}
First observe that $w(t)$ is well-defined, because $K \subset B(0,\iota(x_0))$ so that the restriction of $\Exp_{x_0}$ to $K$ is injective. Moreover, it is actually a diffeomorphism onto $\tilde K$, and thus $\dd(\Exp_{x_0})_w$ is also injective. By the inverse function theorem, $\Exp_{x_0}$ has a differentiable inverse, whose derivative at $w$ is given by
\[
\dd(\Exp_{x_0}^{-1})(w) = \dd(\Exp_{x_0})_w^{-1}
\]
Consequently, by Taylor's theorem, we find for any $t \in [0,T]$ that
\[
w(t) = w + t\dd(\Exp_{x_0})^{-1}_w(v) + t^2w''(\xi_t)
\]
for some $\xi_t \in (0,t)$.

To control the error term, we estimate $|w''(t)|$. We have
\[
w''(t) = \lim_{h\to0} \frac{\dd(\Exp_{x_0})^{-1}_{w(t+h)}(\dot\gamma_v(t+h)) - \dd(\Exp_{x_0})^{-1}_{w(t)}(\dot\gamma_v(t))}{h}.
\]
We estimate the numerator to find a desired bound on $w''(t)$. Set
\begin{equation}\label{eq:def_u}
u =  \dd(\Exp_{x_0})^{-1}_{w(t)}(\dot\gamma_v(t)) \in T_{x_0}M
\end{equation}
and
\begin{equation}\label{eq:def_u_tilde}
\tilde u = \dd(\Exp_{x_0})^{-1}_{w(t+h)}(\dot\gamma_v(t+h)) \in T_{x_0}M.
\end{equation}
Then 
\[
\dot\gamma_v(t) = \dd(\Exp_{x_0})_{w(t)}(u)
\]
and
\[
\dot\gamma_v(t+h) = \dd(\Exp_{x_0})_{w(t+h)}(\tilde u).
\]
As $\gamma_v$ is a geodesic, we have $\dot\gamma_v(t+h) = \tau_{\gamma_v(t)\gamma_v(t+h)}\dot\gamma_v(t)$. Consequently, we obtain
\begin{equation}\label{eq:exp}
\dd(\Exp_{x_0})_{w(t+h)}(\tilde u) = \tau_{\gamma_v(t)\gamma_v(t+h)}\dd(\Exp_{x_0})_{w(t)}(u).
\end{equation}

Define the curves $\psi_1, \psi_2$ in $T_{x_0}M$ by 
\[
\psi_1(s) = w(t) + su, \qquad \psi_2(s) = w(t+h) + s\tilde u
\]
and the corresponding curves $\phi_1,\phi_2$ in $M$ by
\[
\phi_1(s) = \Exp_{x_0}(w(t) + su), \qquad \phi_2(s) = \Exp_{x_0}(w(t+h) + s\tilde u).
\] 

The aim is to control $|u - \tilde u|_{g(x_0)}$.
For this, take normal coordinates around $x_0$ (which can be taken to cover all of $\tilde K$, because $\tilde K \subset \Exp_{x_0}[B(0,\iota(x_0))]$). In these coordinates, let us write $u = u^i\partial_i(x_0)$ and $\tilde u = \tilde u^j\partial_j(x_0)$. Note that in coordinates
\[
\phi_1(s) = (w^1(t) + su^1,\ldots,w^d(t) + su^d)
\]
and
\[
\phi_2(s) = (w^1(t+h) + s\tilde u^1,\ldots,w^d(t+h) + s\tilde u^d).
\]
Consequently,
\[
\dot\phi_1(s) = u^i\partial_i(\phi_1(s))
\]
and
\[
\dot\phi_2(s) = \tilde u^j\partial_j(\phi_2(s)).
\]
By equation \eqref{eq:exp} we have $\dot\phi_2(0) = \tau_{\gamma_v(t)\gamma_v(t+h)}\dot\phi_1(0)$. But then we find that the coefficients of $\dot\phi_2(0)$ satisfy the equations
\[
\dot V^k(s) + \Gamma_{ij}^k(\gamma_v(t + s))\dot\gamma_v^i(t + s)V^j(s) = 0
\]
with $V^k(0) = \dot\phi_1^k(0)$. Consequently, using a Taylor expansion, we find 
\[
\dot\phi_2^k(0) = \dot\phi_1^k(0) - h\Gamma_{ij}^k(\gamma_v(t))\dot\gamma_v^i(t)\dot\phi_1^j(0) + \Oo(h^2)
\]
Using that $\dot\phi_1^k(0) = u^k$ and $\dot\phi_2^k(0) = \tilde u^k$, we obtain
\begin{equation}\label{eq:difference_diff_exp}
u^k - \tilde u^k = h\Gamma_{ij}^k(\gamma_v(t))\dot\gamma_v^i(t)u^j + \Oo(h^2).
\end{equation}

Because we are using normal coordinates around $x_0$, we have
\[
|u - \tilde u|_{g(x_0)}^2 = \sum_{k=1}^d (u^k - \tilde u^k)^2.
\]
If we plug in expression \eqref{eq:difference_diff_exp}, we get
\[
|u - \tilde u|_{g(x_0)}^2 = h^2\sum_{k=1}^d (\Gamma_{ij}^k(\gamma_v(t))\dot\gamma_v^i(t)u^j)^2 + \Oo(h^3).
\]
As the Christoffel symbols are continuous, they are bounded on our compact set $\tilde K$ by some constant $C_1$. Furthermore, the coefficients $g_{ij}$ of the metric are also continuous, and in particular, by the positive definiteness of the metric, there exists a uniform constant $\delta > 0$ such that $g_{ii}(x) \geq \delta$ for all $x \in \tilde K$ and all $i = 1,\ldots,d$.  In particular, this implies
\[
(\dot\gamma_v^i(t))^2 \leq \delta^{-1}|\dot\gamma_v(t)|_{g(\gamma_v(t))}^2 = \delta^{-1}|v|_{g(\gamma_v(0))}^2.
\]
Similarly, we have
\[
(u^j)^2 \leq |\dd(\Exp_{x_0})^{-1}_{w(t)}(\dot\gamma_v(t))|^2_{g(x_0)} \leq C_2^2|\dot\gamma_v(t)|^2_{g(\gamma_v(t))} = C_2^2|v|^2_{g(\gamma_v(0))},
\]
where we used Lemma \ref{lemma:uniform_bound_differential_inverse} to find the constant $C_2$, which again only depends on the compact set $\tilde K$. 

Collecting everything, we find
\[
|u - \tilde u|_{g(x_0)}^2 \leq C_1^2C_2^2|v|^2_{g(\gamma_v(t))}h^2 + \Oo(h^3)
\]
Recalling the definition of $u$ and $\tilde u$ in \eqref{eq:def_u} and \eqref{eq:def_u_tilde} respectively, we find after taking the limit $h \to 0$ that
\[
|w''(t)|_{g(x_0)} \leq C_1C_2|v|_{g(\gamma_v(0))},
\]
which provides the desired constant, because $C_1,C_2$ only depend on $\tilde K$. 
\end{proof}

\subsection{Differential of the Riemannian exponential map and parallel transport} \label{section:Taylor_differential_exponential}

Next, we wish to understand the relation between the differential of the Riemannian exponential map and parallel transport. Before we can make the appropriate comparison, we first need a version of Taylor's theorem suitable for vector fields along a curve on a manifold. 

\begin{proposition}[Taylor's theorem]\label{proposition:Taylor}
Let $\gamma$ be a curve in $M$ and $v$ a vector field along $\gamma$. Define $D_tv(t) := \Nabla_{\dot\gamma(t)}v(t)$ and $D_t^k$ as the $k$-th covariant derivative in this way. Fix $n \in \NN$. For every $t > 0$ there exists $\xi_t \in (0,t)$ such that
\[
v(t) = \sum_{k=0}^n \frac{t^k}{k!}\tau_{\gamma(0)\gamma(t)}D_t^kv(0) + \frac{t^{k+1}}{(k+1)!}\tau_{\gamma(\xi_t)\gamma(t)}D_t^{k+1}v(\xi_t).
\]
\end{proposition}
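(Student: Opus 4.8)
The plan is to reduce the Riemannian statement to the classical one-variable Taylor theorem with Lagrange remainder by pulling everything back to the fixed vector space $T_{\gamma(0)}M$ via parallel transport. Define the curve of vectors $\tilde v(t) = \tau_{\gamma(t)\gamma(0)}v(t) = \tau_{\gamma(0)\gamma(t)}^{-1}v(t) \in T_{\gamma(0)}M$, which is now an honest vector-valued function of the scalar $t$ on an interval. The key computational fact is that parallel transport intertwines $\frac{\dd}{\dd t}$ (the ordinary derivative of the $T_{\gamma(0)}M$-valued curve) with the covariant derivative $D_t$ along $\gamma$: namely $\frac{\dd}{\dd t}\tilde v(t) = \tau_{\gamma(0)\gamma(t)}^{-1} D_t v(t)$. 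This is essentially the characterization of $D_t$ via parallel transport recorded earlier in the excerpt (the formula for $\Nabla_v w$ as a limit of difference quotients of parallel-transported vectors), applied pointwise along $\gamma$; one proves it by choosing a parallel frame $\{E_i(t)\}$ along $\gamma$, writing $v(t) = v^i(t) E_i(t)$, noting $D_t v = \dot v^i E_i$ and $\tilde v(t) = v^i(t) E_i(0)$, so both sides have components $\dot v^i(t)$.

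Iterating this identity $k$ times gives $\frac{\dd^k}{\dd t^k}\tilde v(t) = \tau_{\gamma(0)\gamma(t)}^{-1} D_t^k v(t)$ for all $k$. Now apply the ordinary Taylor theorem with Lagrange remainder to the smooth curve $\tilde v:[0,t]\to T_{\gamma(0)}M$ (coordinatewise in any basis of $T_{\gamma(0)}M$, or directly for vector-valued functions): there exists $\xi_t\in(0,t)$ with
\[
\tilde v(t) = \sum_{k=0}^n \frac{t^k}{k!}\tilde v^{(k)}(0) + \frac{t^{n+1}}{(n+1)!}\tilde v^{(n+1)}(\xi_t).
\]
Substituting $\tilde v^{(k)}(0) = \tau_{\gamma(0)\gamma(0)}^{-1}D_t^k v(0) = D_t^k v(0)$ and $\tilde v^{(n+1)}(\xi_t) = \tau_{\gamma(0)\gamma(\xi_t)}^{-1}D_t^{n+1}v(\xi_t)$, then applying $\tau_{\gamma(0)\gamma(t)}$ to both sides and using the transitivity relation $\tau_{\gamma(0)\gamma(t)}\circ \tau_{\gamma(0)\gamma(\xi_t)}^{-1} = \tau_{\gamma(\xi_t)\gamma(t)}$, yields exactly the asserted formula (with $k$ in the statement playing the role of $n$).

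The main subtlety — not really an obstacle but the one point needing care — is the vector-valued Lagrange remainder: the single-variable Taylor theorem with a Lagrange-type remainder at one common $\xi_t$ does not hold verbatim for vector-valued functions (each component generically needs its own intermediate point). The clean fix is to note that we only need the remainder \emph{bound}, or alternatively to invoke the integral form of the remainder, $\frac{1}{n!}\int_0^t (t-s)^n \tilde v^{(n+1)}(s)\,\dd s$, and then apply the mean value theorem for vector-valued integrals against the positive weight $(t-s)^n$ to produce a single $\xi_t$; since the paper's later applications (Proposition~\ref{proposition:inverse_exp}-style arguments) only use the remainder up to a constant, either formulation suffices. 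I would present the parallel-frame computation for the intertwining identity as the one genuine lemma, and treat the rest as assembly.
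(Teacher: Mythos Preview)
Your approach is essentially identical to the paper's: define $f(t)=\tau_{\gamma(0)\gamma(t)}^{-1}v(t)$, establish $f^{(k)}(t)=\tau_{\gamma(0)\gamma(t)}^{-1}D_t^k v(t)$ (the paper does this via a direct limit computation rather than your parallel-frame argument, but these are equivalent), apply scalar Taylor, and transport back. Your observation about the Lagrange remainder for vector-valued functions is a genuine subtlety that the paper itself glosses over; your assessment that it is harmless for the downstream applications (which only use norm bounds on the remainder) is correct, though note that your proposed ``mean value theorem for vector-valued integrals'' fix does not literally yield a single $\xi_t$ either---it only gives a point in the convex hull of the range---so the honest statement is the integral remainder or a norm estimate, which is all that is ever used.
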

\begin{proof}
Consider the map $f(t) = \tau_{\gamma(0)\gamma(t)}^{-1}v(t)$, mapping into $T_{\gamma(0)}M$. Because $f$ is smooth, by Taylor's theorem, given $n \in \NN$ and $t > 0$, there exists $\xi_t \in (0,t)$ such that
\[
f(t) = \sum_{k=0}^n \frac{t^k}{k!}f^{(k)}(0) + \frac{t^{k+1}}{(k+1)!}f^{(k+1)}(\xi_t).
\]
Let us compute the derivatives of $f$. Note that 
\begin{align*}
f'(t)
&= 
\lim_{h\to 0}\frac{f(t+h) - f(t)}{h}
\\
&=
\lim_{h\to0} \frac{\tau_{\gamma(0)\gamma(t+h)}^{-1}v(t+h) - \tau_{\gamma(0)\gamma(t)}^{-1}v(t)}{h}
\\
&=
\tau_{\gamma(0)\gamma(t)}^{-1}\lim_{h\to0} \frac{\tau_{\gamma(t)\gamma(t+h)}^{-1}v(t+h) - v(t)}{h}
\\
&=
\tau_{\gamma(0)\gamma(t)}^{-1}D_tv(t).
\end{align*}
Using induction, one can show that
\[
f^{(k)}(t) = \tau_{\gamma(0)\gamma(t)}^{-1} D_t^kv(t)
\]
for all $k \in \NN$. But then we find that
\[
\tau_{\gamma(0)\gamma(t)}^{-1}v(t) =  \sum_{k=0}^n \frac{t^k}{k!} D_t^kv(0) + \frac{t^{k+1}}{(k+1)!}\tau_{\gamma(0)\gamma(\xi_t)}^{-1}D_t^{k+1}v(\xi_t).
\]
Applying $\tau_{\gamma(0)\gamma(t)}$ to both sides and observing that $t > \xi_t$ gives the desired result.
\end{proof}

We are now able to compare the differential of the Riemannian exponential map and parallel transport. The Taylor series of the differential of the exponential map may be found in e.g \cite[Appendix A]{Wal12}. The error term for finite Taylor polynomials seems to belong to folklore, but we insert a proof here for the reader's convenience. 

\begin{proposition}\label{proposition:difexp_parallel}
Let $x_0 \in M$ and take $w,u \in T_{x_0}M$. Consider the geodesic $\gamma_w:[0,1] \to M$ given by $\gamma_w(t) = \Exp_{x_0}(tw)$. For every $t \in [0,1]$ there exists $\xi_t \in (0,t)$ such that
\[
\dd(\Exp_{x_0})_{tw}(u) = \tau_{\gamma_w(0)\gamma_w(t)}u + \frac12t\tau_{\gamma_w(\xi_t)\gamma_w(t)}R_{\gamma_w(\xi_t)}(\dd(\Exp_{x_0})_{\xi_tw}(\xi_tu), \dot\gamma_w(\xi_t))\dot\gamma_w(\xi_t).
\]
\end{proposition}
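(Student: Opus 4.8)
The plan is to realise the left-hand side as a rescaling of a Jacobi field along $\gamma_w$ and then feed it into the Taylor expansion with remainder just established in Proposition \ref{proposition:Taylor}.

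First I would recall the construction from Section \ref{section:Jacobi}: the variation of geodesics $\Gamma(s,t)=\Exp_{x_0}(t(w+su))$ has variational vector field $J(t)=\partial_s\Gamma(0,t)=\dd(\Exp_{x_0})_{tw}(tu)$, which is therefore a Jacobi field along $\gamma_w$. Since $\Gamma(s,0)=x_0$ for all $s$ we get $J(0)=0$; and by the symmetry lemma $D_tJ(0)=D_s\partial_t\Gamma(0,0)$, where $\partial_t\Gamma(s,0)=w+su$ is a curve in $T_{x_0}M$ along the constant curve $s\mapsto x_0$, whose covariant derivative is just ordinary differentiation, so $D_tJ(0)=u$. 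By linearity of $\dd(\Exp_{x_0})_{tw}$ we then have $\dd(\Exp_{x_0})_{tw}(u)=\tfrac1t J(t)$ for $t\in(0,1]$, while for $t=0$ the identity is trivial since $\dd(\Exp_{x_0})_0$ is the identity.

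Next I would apply Proposition \ref{proposition:Taylor} with $n=1$ to the vector field $J$ along $\gamma_w$: for each $t\in(0,1]$ there is $\xi_t\in(0,t)$ with
\[
J(t)=\tau_{\gamma_w(0)\gamma_w(t)}\bigl(J(0)+tD_tJ(0)\bigr)+\tfrac{t^2}{2}\,\tau_{\gamma_w(\xi_t)\gamma_w(t)}D_t^2J(\xi_t).
\]
Plugging in $J(0)=0$ and $D_tJ(0)=u$, replacing $D_t^2J(\xi_t)$ via the Jacobi equation \eqref{eq:jacobi}, then dividing by $t$ and rewriting $J(\xi_t)=\dd(\Exp_{x_0})_{\xi_tw}(\xi_tu)$, yields exactly the stated formula, with the curvature term entering with the sign prescribed by \eqref{eq:jacobi}.

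I do not expect a genuine obstacle here, since Proposition \ref{proposition:Taylor} supplies all the analytic content; the only delicate points are the careful identification of $J$ as the Jacobi field with the prescribed initial data (in particular the computation $D_tJ(0)=u$ through the symmetry lemma) and the bookkeeping of signs coming from the curvature convention in \eqref{eq:jacobi} and from dividing the quadratic remainder by $t$.
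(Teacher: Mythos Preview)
Your proposal is correct and follows essentially the same route as the paper's own proof: identify $J(t)=\dd(\Exp_{x_0})_{tw}(tu)$ as the Jacobi field along $\gamma_w$ with $J(0)=0$ and $D_tJ(0)=u$, apply Proposition~\ref{proposition:Taylor} with $n=1$, substitute the Jacobi equation for $D_t^2J$, and divide by $t$. You are in fact slightly more explicit than the paper in justifying $D_tJ(0)=u$ via the symmetry lemma, whereas the paper simply cites Section~\ref{section:Jacobi} for this.
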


\begin{proof}
Consider the vector field $J(t) = \dd(\Exp_{x_0})_{tw}(tu)$ along $\gamma_w(t)$. As argued in Section \ref{section:Jacobi}, $J(t)$ is a Jacobi field along $\gamma(t)$ with $J(0) = 0$ and $\dot J(0) = u$. By the Jacobi equation \eqref{eq:jacobi}, the second derivative is given by
\[
D_t^2 J(t) = - R_{\gamma_w(t)}(J(t),\dot\gamma_w(t))\dot\gamma_w(t).
\]
Consequently, by Proposition \ref{proposition:Taylor} we find there exists some $\xi_t \in (0,t)$ such that
\[
J(t) = t\tau{\gamma_w(0)\gamma_w(t)}u - \frac12t^2\tau_{\gamma_w(\xi_t)\gamma_w(t)}R_{\gamma_w(\xi_t)}(\dd(\Exp_{x_0})_{\xi_tw}(\xi_tu), \dot\gamma_w(\xi_t))\dot\gamma_w(\xi_t).
\]
The result now follows after dividing by $t$.
\end{proof}

This proposition allows us to obtain the following estimate.

\begin{corollary}\label{cor:exp_parallel}
Fix $x_0 \in M$ and let $w \in B(0,\iota(x_0))\subset T_{x_0}M$. Define the geodesic $\gamma_w:[0,1] \to M$ by $\gamma_w(t) = \Exp_{x_0}(tw)$. There exists a constant $C > 0$ only depending on some compact set containing $\gamma_w$ such that
\[
|\dd(\Exp_{x_0})_w(u) - \tau_{\gamma_w(0)\gamma_w(1)}u|_{g(\gamma_w(1))} \leq C|u|_{g(x_0)}|w|_{g(x_0)}^2
\]
for all $u \in T_{x_0}M$.
\end{corollary}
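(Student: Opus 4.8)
The plan is to apply Proposition \ref{proposition:difexp_parallel} directly with $t = 1$, so that there exists $\xi \in (0,1)$ with
\[
\dd(\Exp_{x_0})_w(u) - \tau_{\gamma_w(0)\gamma_w(1)}u = \tfrac12\tau_{\gamma_w(\xi)\gamma_w(1)}R_{\gamma_w(\xi)}\bigl(\dd(\Exp_{x_0})_{\xi w}(\xi u),\dot\gamma_w(\xi)\bigr)\dot\gamma_w(\xi).
\]
Taking norms on the right-hand side and using that parallel transport $\tau_{\gamma_w(\xi)\gamma_w(1)}$ is an isometry, it then suffices to bound the curvature term pointwise. The job is thus to produce a constant $C$, depending only on a compact set containing the image of $\gamma_w$, controlling
\[
\bigl|R_{\gamma_w(\xi)}\bigl(\dd(\Exp_{x_0})_{\xi w}(\xi u),\dot\gamma_w(\xi)\bigr)\dot\gamma_w(\xi)\bigr|_{g(\gamma_w(\xi))}.
\]

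First I would fix a compact set: since $w \in B(0,\iota(x_0))$, the segment $\{sw : s \in [0,1]\}$ is a compact subset of $B(0,\iota(x_0)) \subset T_{x_0}M$, so its image $\tilde K := \Exp_{x_0}(\{sw : s\in[0,1]\})$ under the continuous map $\Exp_{x_0}$ is compact and contains $\gamma_w$. Next I would invoke the multilinearity and smoothness of the Riemann curvature endomorphism: $R$ is a smooth tensor field, hence on the compact set $\tilde K$ there is a constant $C_R$ with $|R_x(a,b)c| \leq C_R|a|\,|b|\,|c|$ for all $x \in \tilde K$ and all tangent vectors $a,b,c$ at $x$ (this is the standard operator-norm bound for a continuous trilinear map, uniform over a compact base). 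Then I would estimate the three arguments. Since $\gamma_w$ is a geodesic with $\dot\gamma_w(0) = w$ and parallel transport is an isometry, $|\dot\gamma_w(\xi)|_{g(\gamma_w(\xi))} = |w|_{g(x_0)}$, which handles the two copies of $\dot\gamma_w(\xi)$. For the remaining factor, note that $\xi w$ ranges in the compact set $\{sw : s\in[0,1]\} \subset B(0,\iota(x_0))$, so Lemma \ref{lemma:uniform_bound_differential} (applied with $K = \{x_0\}$, $K_{x_0} = \{sw : s\in[0,1]\}$) gives a constant $C_d$ with $|\dd(\Exp_{x_0})_{\xi w}| \leq C_d$, whence $|\dd(\Exp_{x_0})_{\xi w}(\xi u)|_{g(\gamma_w(\xi))} \leq C_d\,\xi|u|_{g(x_0)} \leq C_d|u|_{g(x_0)}$.

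Assembling these bounds gives
\[
|\dd(\Exp_{x_0})_w(u) - \tau_{\gamma_w(0)\gamma_w(1)}u|_{g(\gamma_w(1))} \leq \tfrac12 C_R C_d |u|_{g(x_0)}|w|_{g(x_0)}^2,
\]
so $C = \tfrac12 C_R C_d$ works, and it depends only on $\tilde K$ as required. I do not anticipate a genuine obstacle here — the statement is essentially a packaging of Proposition \ref{proposition:difexp_parallel} together with Lemma \ref{lemma:uniform_bound_differential} — but the one point demanding a little care is making sure all constants are genuinely uniform over the relevant compact set rather than depending on $w$, $u$, or $\xi$ individually; this is why it is important to pass to the fixed compact sets $\tilde K$ and $\{sw : s\in[0,1]\}$ before extracting the curvature bound and the bound on $\dd(\Exp_{x_0})$, and to use that $\xi \in (0,1)$ so the factors of $\xi$ are harmless.
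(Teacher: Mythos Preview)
Your proposal is correct and follows essentially the same approach as the paper: apply Proposition \ref{proposition:difexp_parallel} at $t=1$, take norms using that parallel transport is an isometry, then bound the curvature endomorphism uniformly on a compact set containing $\gamma_w$ and invoke Lemma \ref{lemma:uniform_bound_differential} for the differential of $\Exp_{x_0}$, together with $|\dot\gamma_w(\xi)| = |w|$ and $\xi < 1$. Your version is perhaps slightly more explicit about naming the compact sets, but the argument is the same.
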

\begin{proof}
By Proposition \ref{proposition:difexp_parallel} there exists $\xi \in (0,1)$ such that
\[
\dd(\Exp_{x_0})_w(u) - \tau_{\gamma_w(0)\gamma_w(1)}u = - \frac12\tau_{\gamma_w(\xi),\gamma_w(1)}R_{\gamma_w(\xi)}(\dd(\Exp_{x_0})_{\xi w}(\xi u), \dot\gamma_w(\xi))\dot\gamma_w(\xi).
\]
Now taking norms on both sides, we first observe that the norm of the Riemann curvature endomorphism is bounded on compact sets, because it is continuous (in coordinates the norm can be expressed as a continuous functions of the coefficients). Furthermore, by Lemma \ref{lemma:uniform_bound_differential} we have that $w \mapsto |\dd(\Exp_{x_0})_w|$ is bounded on compact sets.

We thus obtain constants $C_1,C_2 > 0$, only depending on some compact set containing the curve $\gamma_w$ such that
\begin{align*}
&|\dd(\Exp_{x_0})_w(u) - \tau_{\gamma_w(0)\gamma_w(1)}u|_{g(\gamma_w(1))}
\\
&\leq
\frac12|R_{\gamma_w(\xi)}(\dd(\Exp_{x_0})_{\xi w}(\xi u), \dot\gamma_w(\xi))\dot\gamma_w(\xi)|_{g(\gamma_w(\xi))}
\\
&\leq
C_1|\dd(\Exp_{x_0})_{\xi w}(\xi u)|_{g(\gamma_w(\xi))}|\dot\gamma_w(\xi)|_{g(\gamma_w(\xi))}^2
\\
&\leq
C_1C_2|u|_{g(x_0)}|w|^2_{g(x_0)}.
\end{align*}
Here, in the last line we used that $\xi < 1$ and the fact that $\gamma_w$ is a geodesic.
\end{proof}

The result in the latter corollary can also be used to compare the inverse of the differential of the exponential map to the inverse of parallel transport, which itself is parallel transport, but in the opposite direction.

\begin{corollary}\label{corollary:inverse_exp_parallel}
Let $x_0 \in M$ and fix $w \in B(0,\iota(x_0)) \subset T_{x_0}M$. Define the geodesic $\gamma_w:[0,1] \to M$ by $\gamma_w(t) = \Exp_{x_0}(tw)$. Then there exists a constant $C > 0$ only depending on some compact set containing $\gamma_w$, such that
\[
|\dd(\Exp_{x_0})^{-1}_w(u) - \tau^{-1}_{\gamma_w(0)\gamma_w(1)}u|_{g(\gamma_w(1))} \leq C|u|_{g(\gamma_w(1))}|w|_{g(x_0)}^2
\]
for all $u \in T_{\gamma_w(1)}M$.
\end{corollary}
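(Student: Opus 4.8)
The plan is to reduce the statement directly to Corollary \ref{cor:exp_parallel} by the substitution $v := \dd(\Exp_{x_0})^{-1}_w(u) \in T_{x_0}M$, which is legitimate because $w$ lies within the injectivity radius, so $\Exp_{x_0}$ is a diffeomorphism near $w$ and $\dd(\Exp_{x_0})_w$ is a linear isomorphism with $u = \dd(\Exp_{x_0})_w(v)$. With this notation one rewrites
\[
\dd(\Exp_{x_0})^{-1}_w(u) - \tau^{-1}_{\gamma_w(0)\gamma_w(1)}u
= v - \tau^{-1}_{\gamma_w(0)\gamma_w(1)}\dd(\Exp_{x_0})_w(v)
= \tau^{-1}_{\gamma_w(0)\gamma_w(1)}\bigl(\tau_{\gamma_w(0)\gamma_w(1)}v - \dd(\Exp_{x_0})_w(v)\bigr).
\]
Since parallel transport is an isometry, the norm of the left-hand side equals $|\dd(\Exp_{x_0})_w(v) - \tau_{\gamma_w(0)\gamma_w(1)}v|_{g(\gamma_w(1))}$, and Corollary \ref{cor:exp_parallel} (applied to the vector $v$) bounds this by $C|v|_{g(x_0)}|w|_{g(x_0)}^2$ with $C$ depending only on a compact set containing $\gamma_w$.

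It then remains to replace $|v|_{g(x_0)}$ by a multiple of $|u|$. This is where the operator bound for the inverse differential enters: since $v = \dd(\Exp_{x_0})^{-1}_w(u)$ one has $|v|_{g(x_0)} \leq |\dd(\Exp_{x_0})^{-1}_w|\,|u|_{g(\gamma_w(1))}$, and by the remark following Lemma \ref{lemma:uniform_bound_differential_inverse}, applied with the compact set $\overline{B(0,|w|)} \subset B(0,\iota(x_0))$, the operator norm $|\dd(\Exp_{x_0})^{-1}_w|$ is bounded by a constant depending only on that ball — equivalently, on the compact set $\Exp_{x_0}(\overline{B(0,|w|)})$, which contains $\gamma_w$. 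Combining the two estimates and absorbing the product of the two constants into a single $C$ yields the claim.

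I do not expect a genuine obstacle here; the only points to watch are bookkeeping ones. First, the difference $\dd(\Exp_{x_0})^{-1}_w(u) - \tau^{-1}_{\gamma_w(0)\gamma_w(1)}u$ actually lives in $T_{x_0}M$, and since parallel transport along $\gamma_w$ is an isometry the numerical value of its norm is the same whether one records it with respect to $g(x_0)$ or (after transporting) $g(\gamma_w(1))$, so the subscript in the statement is harmless. Second, one must verify that the resulting constant really depends only on a compact set containing $\gamma_w$ and not on $u$; this is automatic since both Corollary \ref{cor:exp_parallel} and Lemma \ref{lemma:uniform_bound_differential_inverse} already furnish constants of exactly that type, uniform in the tangent vector being transported.
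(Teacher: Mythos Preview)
Your proof is correct and is essentially identical to the paper's own argument: both substitute $v = \dd(\Exp_{x_0})_w^{-1}u$, invoke Corollary~\ref{cor:exp_parallel} for $v$, use that parallel transport is an isometry to rewrite the left-hand side, and then bound $|v|$ via Lemma~\ref{lemma:uniform_bound_differential_inverse}. Your remark about the norm subscript is well taken; the vector indeed lives in $T_{x_0}M$, and the paper's own proof has the same harmless ambiguity.
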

\begin{proof}
Fix $u \in T_{\gamma_w(1)}M$ and consider $\dd(\Exp_{x_0})_w^{-1}u \in T_{x_0}M$. By Corollary \ref{cor:exp_parallel}, there exists a constant $C > 0$ only depending on a compact set containing $\gamma_w$ such that
\[
|u - \tau_{\gamma_w(0)\gamma_w(1)}\dd(\Exp_{x_0})_w^{-1}u|_{g(\gamma_w(1))} \leq C|\dd(\Exp_{x_0})_w^{-1}u|_{g(x_0)}|w|_{g(x_0)}^2.
\]
Because parallel transport is an isometry, the left hand side is equal to
\[
|\tau_{\gamma_w(1)\gamma_w(0)}u - \dd(\Exp_{x_0})_w^{-1}u|_{g(\gamma_w(1))}.
\]
For the right hand side, we observe that by Lemma \ref{lemma:uniform_bound_differential_inverse} there exists a constant $\tilde C > 0$, only depending on some compact set containing $\gamma_w$ such that
\[
|\dd(\Exp_{x_0})_w^{-1}u|_{g(x_0)} \leq \tilde C|u|_{g(\gamma_w(1))}.
\]
Putting everything together, we find 
\[
|\tau_{\gamma_w(1)\gamma_w(0)}u - \dd(\Exp_{x_0})_w^{-1}u|_{g(\gamma_w(1))} \leq C\tilde C|u|_{g(\gamma_w(1))}|w|_{g(x_0)}^2
\]
as desired.
\end{proof}

\subsection{Spreading of geodesics}

We conclude this section with a result on how far geodesics, possibly starting in different points, can spread in a given amount of time. To shed some light on the upcoming result, we first consider the Euclidean case. For this, let $\gamma(t) = \gamma(0) + t\dot\gamma(0)$ and $\phi(t) = \phi(0) + t\dot\phi(t)$ be two straight lines. Then
\[
|\gamma(t) - \phi(t)|^2 = |\gamma(0) - \phi(0)|^2 + 2t\inp{\dot\gamma(0) - \dot\phi(0)}{\gamma(0) - \phi(0)} + t^2|\dot\gamma(t) - \dot\phi(t)|^2.
\]
It turns out that in a Riemannian manifold, this formula is analogous up to first order. The curvature terms show up in the second order term. Before we prove this, we first need a lemma.

\begin{lemma}\label{lemma:continuity_distance}
Let $K \subset M$ be compact and fix $L > 0$. Let $0 < r < \iota(K)$. Let $\phi:[0,T] \to M$ and $\gamma:[0,T] \to M$ be two geodesics contained in $K$. Assume that $d(\phi(0),\gamma(0)) \leq \frac r2$ and $|\dot\phi(0)|,|\dot\gamma(0)| \leq L$. Then there exists a $t_0 > 0$, only depending on $K,L$ and $r$, such that for all $0 \leq t \leq t_0$ we have
\[
d(\phi(t),\gamma(t)) < r.
\]
\end{lemma}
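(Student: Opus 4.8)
The plan is to use a compactness argument together with the continuity of the geodesic flow. The key observation is that the map $(x,v,t) \mapsto (\Exp_x(tv), \dot\gamma_v(t))$ describing the geodesic flow is continuous (indeed smooth where defined), and that we are restricting attention to initial conditions lying in a compact set.

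First I would set up the relevant compact domain of initial conditions. Since $\phi,\gamma$ are contained in $K$ with speeds bounded by $L$, their initial data $(\phi(0),\dot\phi(0))$ and $(\gamma(0),\dot\gamma(0))$ lie in the compact set
\[
\cK = \{(x,v) \in TM \mid x \in K,\ |v|_{g(x)} \leq L\}.
\]
For $(x,v) \in \cK$, let $\Phi_t(x,v) = \Exp_x(tv) \in M$ denote the geodesic position at time $t$; this is defined for all $t$ since $M$ is complete, and $(t,x,v) \mapsto \Phi_t(x,v)$ is continuous. Define $F:[0,\infty) \times \cK \times \cK \to [0,\infty)$ by $F(t,(x,v),(y,w)) = d(\Phi_t(x,v),\Phi_t(y,w))$. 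This is continuous, being a composition of the continuous distance function $d$ with continuous maps.

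Next I would exploit continuity at $t=0$. For any pair of initial conditions with $d(x,y) \leq r/2$ we have $F(0,(x,v),(y,w)) = d(x,y) \leq r/2 < r$. I want a single $t_0$ working uniformly. Consider the closed (hence compact) subset
\[
\cK' = \{((x,v),(y,w)) \in \cK \times \cK \mid d(x,y) \leq r/2\}.
\]
For each point $p = ((x,v),(y,w)) \in \cK'$ we have $F(0,p) \leq r/2$, so by continuity of $F$ there is $\epsilon_p > 0$ and an open neighbourhood $U_p \ni p$ in $\cK'$ such that $F(t,q) < r$ for all $0 \leq t < \epsilon_p$ and all $q \in U_p$. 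The sets $\{U_p\}$ cover the compact set $\cK'$; extract a finite subcover $U_{p_1},\dots,U_{p_k}$ and set $t_0 = \min\{\epsilon_{p_1},\dots,\epsilon_{p_k}\} > 0$. Then for any $((x,v),(y,w)) \in \cK'$ and any $0 \leq t \leq t_0$ (shrinking $t_0$ slightly, or using $0 \le t < t_0$ and relabelling) we get $d(\Phi_t(x,v),\Phi_t(y,w)) < r$. Applying this to $(x,v) = (\phi(0),\dot\phi(0))$ and $(y,w) = (\gamma(0),\dot\gamma(0))$, which lies in $\cK'$ by hypothesis, yields $d(\phi(t),\gamma(t)) < r$ for $0 \le t \le t_0$, with $t_0$ depending only on $K$, $L$, $r$ (through $\cK$, $\cK'$, and the covering), as required.

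The main obstacle, such as it is, is purely a matter of bookkeeping to make the dependence of $t_0$ genuinely uniform: one must phrase the argument as a compactness statement over \emph{all} admissible pairs of initial conditions at once, rather than fixing $\phi$ and $\gamma$ first, so that the finite subcover (and hence $t_0$) does not secretly depend on the particular geodesics. One should also take care that $F$ is well-defined and continuous globally, which is where completeness of $M$ (so that $\Phi_t$ is defined for all $t \geq 0$) is used; the restriction $r < \iota(K)$ is not actually needed for this lemma but is presumably carried along for use in the subsequent spreading estimate.
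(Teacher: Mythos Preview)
Your proof is correct but proceeds quite differently from the paper. The paper gives a more elementary and explicit argument: it invokes uniform continuity of $d$ on the compact set $K\times K$ to find $\epsilon>0$ with $|d(x,y)-d(x',y')|<r/2$ whenever $d(x,x')<\epsilon$ and $d(y,y')<\epsilon$, then uses the arc--length bound $d(\phi(t),\phi(0))\le t|\dot\phi(0)|\le tL$ (and similarly for $\gamma$) to conclude that any $t_0<\epsilon L^{-1}$ works. This avoids the geodesic flow entirely and in fact applies to arbitrary curves with speed bounded by $L$; it also yields a concrete description of $t_0$. Your compactness argument via the phase space $\cK$ and the continuous map $F$ is perfectly valid and conceptually clean, but it is less explicit and is tailored to geodesics since it relies on the flow $\Phi_t$. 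Your closing remark that $r<\iota(K)$ plays no role in this lemma is correct; the paper's proof does not use it either.
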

\begin{proof}
Because $d:M\times M \to \RR$ is continuous, and $K \times K$ is compact, $d(\cdot,\cdot)$ is uniformly continuous on $K \times K$. Consequently, pick $\epsilon > 0$ such that $|d(x,y) - d(x',y')| < \frac r2$, whenever $d(x,x') < \epsilon$ and $d(y,y') < \epsilon$.

Now observe that $d(\phi(t),\phi(0)) \leq t|\dot\phi(0)| \leq tL$ and likewise $d(\gamma(t),\gamma(0)) \leq tL$. Hence, if we take $t_0 < \epsilon L^{-1}$, then for all $0 \leq t \leq t_0$ we have $d(\phi(t),\phi(0)) < \epsilon$ and $d(\gamma(t),\gamma(0)) < \epsilon$. By the choice of $\epsilon$, it follows that
\[
|d(\phi(0),\gamma(0)) - d(\phi(t),\gamma(t))| < \frac r2.
\]
Since $d(\phi(0),\gamma(0)) \leq \frac12r$, the above then implies that $d(\phi(t),\gamma(t)) < r$ as desired.
\end{proof}

\begin{proposition}\label{proposition:distance_geodesic_point}
Let $K \subset M$ be compact and fix $L > 0$. Let $0 < r < \iota(K)$ and fix $t_0 > 0$ as in Lemma \ref{lemma:continuity_distance}. Let $\phi:[0,t_0] \to M$ and $\gamma:[0,t_0] \to M$ be two geodesics in $K$ such that $d(\gamma(0),\phi(0)) \leq \frac r2$ and $|\dot\phi(0)|,|\dot\gamma(0)| \leq L$. Finally, let $\tilde K$ be a compact set containing all geodesics of minimal length between points in $K$. Then for all $0 \leq t \leq t_0$ we have
\begin{align*}
&d(\gamma(t),\phi(t))^2 
\\
&\leq 
d(\gamma(0),\phi(0))^2 + 2t\inp{\tau_{\phi(0)\gamma(0)}^{-1}\dot\gamma(0) - \dot\phi(0)}{\Exp_{\phi(0)}^{-1}\gamma(0)} + t^2C(|\dot\gamma(0)| + |\dot\phi(0)|),
\end{align*}
where the constant $C > 0$ only depends on $\tilde K,L$ and $r$.
\end{proposition}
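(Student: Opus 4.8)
The plan is to analyse the real-valued function $f(t) := d(\gamma(t),\phi(t))^2$ on $[0,t_0]$ and to Taylor expand it to second order around $t=0$. First note that, by Lemma \ref{lemma:continuity_distance}, $d(\gamma(t),\phi(t)) < r < \iota(K) \le \iota(\phi(t))$ for every $t \in [0,t_0]$ (here $\phi(t)\in K$); hence $\gamma(t)$ lies strictly inside the injectivity radius of $\phi(t)$, the vector $V(t) := \Exp_{\phi(t)}^{-1}\gamma(t) \in T_{\phi(t)}M$ and the minimal geodesic $c_t(s) := \Exp_{\phi(t)}(sV(t))$ from $\phi(t)$ to $\gamma(t)$ depend smoothly on $t$, and $f$ is smooth. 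Taylor's theorem then gives, for each $t$, some $\xi_t \in (0,t)$ with $f(t) = f(0) + tf'(0) + \tfrac{t^2}{2}f''(\xi_t)$; since $f(0) = d(\gamma(0),\phi(0))^2$, it remains to identify $f'(0)$ and to bound $f''(\xi_t)$ from above.

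I would compute both derivatives by realising $f$ as twice an energy. Consider the variation of geodesics $\Gamma(t,s) = c_t(s)$, so that $f(t) = \int_0^1|\partial_s\Gamma(t,s)|^2\ud s$ (each $c_t$ has constant speed $|V(t)| = d(\gamma(t),\phi(t))$). Differentiating under the integral, exchanging $D_t\partial_s\Gamma = D_s\partial_t\Gamma$ via the symmetry lemma (Lemma \ref{lemma:symmetry}), integrating by parts in $s$ and using $D_s\partial_s\Gamma = 0$, one obtains the boundary expression
\[
\tfrac12 f'(t) = \inp{\partial_t\Gamma(t,1)}{\partial_s\Gamma(t,1)}_{g(\gamma(t))} - \inp{\partial_t\Gamma(t,0)}{\partial_s\Gamma(t,0)}_{g(\phi(t))}.
\]
Now $\partial_t\Gamma(t,0) = \dot\phi(t)$, $\partial_t\Gamma(t,1) = \dot\gamma(t)$, $\partial_s\Gamma(t,0) = V(t)$, and $\partial_s\Gamma(t,1)$ is the parallel transport of $V(t)$ along $c_t$ (since $\dot c_t$ is parallel along the geodesic $c_t$). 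Because parallel transport is an isometry and $c_0$ is precisely the minimal geodesic defining $\tau_{\phi(0)\gamma(0)}$, evaluating at $t=0$ yields
\[
\tfrac12 f'(0) = \inp{\tau_{\phi(0)\gamma(0)}^{-1}\dot\gamma(0) - \dot\phi(0)}{\Exp_{\phi(0)}^{-1}\gamma(0)}_{g(\phi(0))},
\]
which is exactly the first-order term in the asserted inequality.

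For $f''$ I would differentiate the boundary expression once more. Put $J_t(s) := \partial_t\Gamma(t,s)$; as $\Gamma$ is a variation through geodesics, $J_t$ is a Jacobi field along $c_t$ with $J_t(0) = \dot\phi(t)$ and $J_t(1) = \dot\gamma(t)$. Using the symmetry lemma again together with $D_t\dot\gamma = D_t\dot\phi = 0$, the $t$-derivatives of the endpoint velocities drop out, and with \eqref{eq:jacobi} one gets
\[
\tfrac12 f''(t) = \inp{J_t(1)}{D_sJ_t(1)} - \inp{J_t(0)}{D_sJ_t(0)} = \int_0^1 \left(|D_sJ_t|^2 - \inp{R(J_t,\dot c_t)\dot c_t}{J_t}\right)\ud s,
\]
i.e.\ $f''(t)$ is twice the index form of $c_t$ evaluated on $J_t$. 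Since $c_t$ is length-minimising, this index form is positive semidefinite on fields vanishing at the endpoints, so $J_t$ minimises it among all vector fields along $c_t$ with boundary values $\dot\phi(t),\dot\gamma(t)$. Hence I would bound $\tfrac12 f''(\xi_t)$ from above by plugging into the index form the comparison field obtained by parallel-transporting $(1-s)\dot\phi(\xi_t)$ forward from $c_{\xi_t}(0)$ and $s\,\dot\gamma(\xi_t)$ backward from $c_{\xi_t}(1)$ and adding them: it has the correct boundary values, and it and its covariant $s$-derivative are bounded in norm by $|\dot\phi(\xi_t)| + |\dot\gamma(\xi_t)| = |\dot\phi(0)| + |\dot\gamma(0)|$, while $|\dot c_{\xi_t}| = |V(\xi_t)| < r$ and $c_{\xi_t} \subset \tilde K$. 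Bounding the curvature term by $\sup_{\tilde K}\|R\|$ then gives $f''(\xi_t) \le 2\big(1 + r^2\sup_{\tilde K}\|R\|\big)\big(|\dot\phi(0)| + |\dot\gamma(0)|\big)^2$, and since $|\dot\phi(0)|,|\dot\gamma(0)| \le L$ this is at most $2C(|\dot\gamma(0)| + |\dot\phi(0)|)$ with $C := 2L\big(1 + r^2\sup_{\tilde K}\|R\|\big)$, depending only on $\tilde K,L,r$. Substituting $f(0)$, $f'(0)$ and this estimate into the Taylor formula yields the proposition.

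The step I expect to be the main obstacle is the second-variation bookkeeping: the endpoints of the connecting geodesic $c_t$ move with $t$, so the usual fixed-endpoint second variation formula must be re-derived with the correct boundary contributions (this is where the symmetry lemma and $D_t\dot\gamma = D_t\dot\phi = 0$ enter), and one must justify that the Jacobi field $J_t$ minimises the index form, using that $c_t$ is length-minimising and conjugate-point free inside the injectivity radius. Once the identity $\tfrac12 f'' = \int_0^1(|D_sJ_t|^2 - \inp{R(J_t,\dot c_t)\dot c_t}{J_t})\ud s$ is in place, the remaining estimate is the routine comparison-field computation sketched above.
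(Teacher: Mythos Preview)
Your proposal is correct and arrives at the same first-order term as the paper via essentially the same variation $\Gamma(t,s)=\Exp_{\phi(t)}(sV(t))$; the formula $\tfrac12 f'(t)=\inp{J_t(1)}{\dot c_t(1)}-\inp{J_t(0)}{\dot c_t(0)}$ you obtain from the first variation of energy is exactly what the paper gets after invoking Proposition~\ref{prop:inner_product_jacobi}. Where you genuinely diverge is in bounding $f''$. The paper stops at the boundary expression $\tfrac12 f''(t)=\inp{\dot\gamma(t)}{D_sJ_t(1)}-\inp{\dot\phi(t)}{D_sJ_t(0)}$ and then controls $|D_sJ_t(0)|$ and $|D_sJ_t(1)|$ by decomposing $J_t(s)=J_t^1(s)+J_t^2(1-s)$ into two Jacobi fields vanishing at one endpoint, expressing them through $\dd\Exp$, and invoking Proposition~\ref{prop:norm_jacobi} together with the uniform bounds on $\dd\Exp$ and $\dd\Exp^{-1}$ from Lemmas~\ref{lemma:uniform_bound_differential} and~\ref{lemma:uniform_bound_differential_inverse}. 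You instead integrate the boundary expression by parts to recognise $\tfrac12 f''(t)$ as the index form $I_{c_t}(J_t,J_t)$, use that $c_t$ has no conjugate points (being inside the injectivity radius) so that the Jacobi field with prescribed endpoint values minimises the index form, and then evaluate the index form on an explicit parallel-transported comparison field. Your route is more self-contained: it bypasses Proposition~\ref{prop:norm_jacobi} and the operator-norm lemmas entirely and yields an explicit constant $C=2L(1+r^2\sup_{\tilde K}\|R\|)$. The paper's route is more elementary in that it never appeals to the index-form minimisation property, trading that for the auxiliary estimates developed earlier in Section~\ref{section:geometry}. Both give what is needed; your version would shorten the paper but import one extra standard fact about Jacobi fields.
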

\begin{proof}
Define $f(t) = d(\gamma(t),\phi(t))^2$. By the choice of $t_0$, Lemma \ref{lemma:continuity_distance} gives us that 
\[
d(\phi(t),\gamma(t)) < r < \iota(K)
\]
for every $0 \leq t \leq t_0$. Consequently, $\phi(t)$ and $\gamma(t)$ may be joined by a unique geodesic of minimal length. Moreover, by restricting $\Exp$, we have $f(t) = |\Exp_{\phi(t)}^{-1}\gamma(t)|^2$. Consequently, we can compute
\begin{align*}
f'(t) 
&= 
\frac{\dd}{\dd t}|\Exp_{\phi(t)}^{-1}\gamma(t)|^2
\\
&=
2\inp{\Nabla_{\dot\phi(t)} \Exp_{\phi(t)}^{-1}\gamma(t)}{\Exp_{x_0}^{-1}\gamma(t)}. 
\end{align*}

Now define the variation of curves $\Gamma:[0,t_0]\times[0,1] \to M$ by 
\[
\Gamma(t,s) = \Exp_{\phi(t)}(s\Exp_{\phi(t)}^{-1}\gamma(t)).
\]
Then for each $t$, the curve $s \mapsto \Gamma(t,s)$ is the geodesic of minimal length between $\phi(t)$ and $\gamma(t)$. Hence, $\Gamma([0,t_0]\times[0,1]) \subset \tilde K$. Furthermore, because $\Gamma$ is a variation of geodesics, the vector field
\[
J_t(s) = \partial_t\Gamma(t,s)
\]
is a Jacobi field along the curve $\Gamma_t(s) := \Gamma(t,s)$ for all $0 \leq t \leq t_0$.

Now note that by the Symmetry Lemma (Lemma \ref{lemma:symmetry}), we have
\[
\Nabla_{\dot\phi(t)} \Exp_{\phi(t)}^{-1}\gamma(t) = D_t\partial_s\Gamma(t,0) = D_s\partial_t\Gamma(t,0) = \dot J_t(0).
\]
Consequently, we obtain
\[
f'(t) = 2\inp{\dot J_t(0)}{\Exp_{x_0}^{-1}\gamma(t)} = 2\inp{\dot J_t(0)}{\partial_s\Gamma(t,0)}.
\]

By Proposition \ref{prop:inner_product_jacobi} we find
\begin{align*}
f'(t)
&=
2\inp{\dot J_t(0)}{\partial_s\Gamma(t,0)}
\\
&=
2\inp{J_t(1)}{\partial_s\Gamma(t,1)} - 2\inp{J_t(0)}{\partial_s\Gamma(t,0)} 
\\
&=
2\inp{\dot\gamma(t)}{-\Exp_{\gamma(t)}\phi(t)} - 2\inp{\dot\phi(t)}{\Exp_{\phi(t)}^{-1}\gamma(t)}
\\
&= 
2\inp{\tau_{\phi(t)\gamma(t)}^{-1}\dot\gamma(t) - \dot\phi(t)}{\Exp_{\phi(t)}^{-1}\gamma(t)}.
\end{align*}
Consequently, we have
\[
f'(0) = 2\inp{\tau_{\phi(0)\gamma(0)}^{-1}\dot\gamma(0) - \dot\phi(0)}{\Exp_{\phi(0)}^{-1}\gamma(0)}.
\]
By Taylor's theorem, we find that
\[
d(\gamma(t),\phi(t))^2 \leq d(\gamma(0),\phi(0))^2 + 2t\inp{\tau_{\phi(0)\gamma(0)}^{-1}\dot\gamma(0) - \dot\phi(0)}{\Exp_{\phi(0)}^{-1}\gamma(0)} + \frac12t^2\sup_{\xi \in [0,t]} |f''(\xi)|.
\]

We now turn to estimating the residual term. For this, we compute $f''(t)$ as follows:
\begin{align*}
\frac12f''(t)
&=
\frac{\dd}{\dd t}\inp{\dot\gamma(t)}{-\Exp_{\gamma(t)}\phi(t)} - \frac{\dd}{\dd t}\inp{\dot\phi(t)}{\Exp_{\phi(t)}^{-1}\gamma(t)}
\\
&=
-\inp{\dot\gamma(t)}{\Nabla_{\dot\gamma(t)}\Exp_{\gamma(t)}^{-1}\phi(t)} - \inp{\dot\phi(t)}{\Nabla_{\dot\phi(t)}\Exp_{\phi(t)}^{-1}\gamma(t)}
\\
&=
\inp{\dot\gamma(t)}{\partial_t\Gamma(t,1)} - \inp{\dot\phi(t)}{\partial_t\Gamma(t,0)}
\\
&=
\inp{\dot\gamma(t)}{\dot J_t(1)} - \inp{\dot\phi(t)}{\dot J_t(0)} .
\end{align*}
Here we used that $\Nabla_{\dot\phi(t)}\dot\phi(t) = \Nabla_{\dot\gamma(t)}\dot\gamma(t) = 0$, since $\phi$ and $\gamma$ are geodesics. Consequently, we have
\[
\frac12|f''(t)| \leq |\dot\gamma(t)||\dot J_t(1)| + |\dot\phi(t)||\dot J_t(0)| = |\dot\gamma(0)||\dot J_t(1)| + |\dot\phi(0)||\dot J_t(0)|,
\]
where we again used that $\gamma$ and $\phi$ are geodesics. It follows that we are done once we can bound $|\dot J_t(0)|$ and $|\dot J_t(1)|$. For this, we first obtain a more specific expression for the Jacobi field $J_t$. To this end, we define for every $0 \leq t \leq t_0$ the vector fields
\[
J_t^1(s) = \dd(\Exp_{\phi(t)})_{s\partial_s\Gamma(t,0)}(s\dot J^1_t(0))
\]
and 
\[
J_t^2(s) = \dd(\Exp_{\gamma(t)})_{-s\partial_s\Gamma(t,1)}(s\dot J^2_t(0)),
\]
where
\[
\dot J^1_t(0) = \dd(\Exp_{\phi(t)})^{-1}_{\Exp_{\phi(t)}^{-1}\gamma(t)}\dot\gamma(t) \in T_{\phi(t)}M
\]
and likewise
\[
\dot J^2_t(0) = \dd(\Exp_{\gamma(t)})^{-1}_{\Exp_{\gamma(t)}^{-1}\phi(t)}\dot\phi(t) \in T_{\gamma(t)}M.
\]
As explained in Section \ref{section:Jacobi}, $J_t^1$ and $J_t^2$ are Jacobi fields along $\Gamma_t$. Note that $J_t^1(0) = J_t^2(0) = 0$ and $J_t^1(1) = \dot\gamma(t)$ and $J_t^2(1) = \dot\phi(t)$. Because $J_t$ is the unique Jacobi field along $\Gamma_t$ with $J_t(0) = \dot\phi(t)$ and $J_t(1) = \dot\gamma(t)$, it follows that
\[
J_t(s) = J_t^1(s) + J_t^2(1-s).
\]

Using the above decomposition, we show how to bound $|\dot J_t(0)|$. The bound for $|\dot J_t(1)|$ may be obtained similarly. By the triangle inequality, we have
\[
|\dot J_t(0)| \leq |\dot J^1_t(0)| + |\dot J^2_t(1)|.
\]

Note that
\[
|\dot J_t^1(0)| = |\dd(\Exp_{\phi(t)})^{-1}_{\Exp_{\phi(t)}^{-1}\gamma(t)}\dot\gamma(t)| \leq |\dd(\Exp_{\phi(t)})^{-1}_{\Exp_{\phi(t)}^{-1}\gamma(t)}||\dot\gamma(t)| 
\]
Consequently, by Lemma \ref{lemma:uniform_bound_differential_inverse} there exists a constant $C > 0$ only depending on $K$ and $r$ (since $|\Exp_{\phi(t)}^{-1}\gamma(t)| = d(\phi(t),\gamma(t)) \leq r$) such that
\[
|\dot J_t^1(0)| \leq C|\dot\gamma(t)| = C|\dot\gamma(0)|.
\]

For the other term, it follows from Proposition \ref{prop:norm_jacobi} that 
\begin{align*}
|\dot J_t^2(1)|
&\leq
|\dot J_t^2(0)| + \sup_{s \in [0,1]} |R_{\Gamma(t,s)}(J_t^2(s),\partial_s\Gamma(t,s))\partial_s\Gamma(t,s)|
\\
&\leq 
C|\dot\phi(0)| + |\partial_s\Gamma(t,0)|^2 \sup_{s \in [0,1]} |R_{\psi_t(s)}||J_t^2(s)|
\\
&\leq
C|\dot\phi(0)| + \tilde Cd(\gamma(t),\phi(t))^2\sup_{s \in [0,1]} |J_t^2(s)|
\\
&\leq
C|\dot\phi(0)| + \tilde Cr^2\sup_{s \in [0,1]} |J_t^2(s)|.
\end{align*}
Here we used in the second line again Lemma \ref{lemma:uniform_bound_differential_inverse} as above, together with the fact that the curves $\Gamma_t(s)$ are geodesics. Furthemore, we used that the curvature is continuous, and hence bounded on compact sets, so that $\tilde C$ only depends on $\tilde K$, since the variation $\Gamma$ is contained in $\tilde K$. In the last line, we used that $d(\gamma(t),\phi(t)) \leq r$ for all $0 \leq t \leq t_0$ by choice of $t_0$.

Finally, we have for any $s \in [0,1]$
\begin{align*}
|J_t^2(s)|
&=
|\dd(\Exp_{\gamma(t)})_{-s\partial_s\Gamma(t,1)}(s\dot J^2_t(0))|
\\
&\leq
s|\dd(\Exp_{\gamma(t)})_{-s\partial_s\Gamma(t,1)}||\dot J^2_t(0))|
\\
&\leq
C'|\dot\phi(0)|,
\end{align*}
where in the last line we used Lemma \ref{lemma:uniform_bound_differential}. Collecting everything, there exists a constant $C > 0$, only depending on $\tilde K$ and $r$, such that
\[
|\dot J_t^2(1)| \leq C|\dot\phi(0)|.
\] 

Putting everything together, we find that
\[
|\dot J_t(0)| \leq |\dot J^1_t(0)| + |\dot J^2_t(1)| \leq C(|\dot\gamma(0)| + |\dot\phi(0)|)
\]
for some $C > 0$ only depending on $\tilde K$ and $r$. Obtaining a similar bound for $|\dot J_t(1)|$ now proves the claim. 
\end{proof}


\section{Proof of Cram\'er's theorem for geodesic random walks}\label{section:Cramer_bounded}

In this section we provide a proof of Cram\'er's theorem for geodesic random walks with independent and identically distributed increments, which are bounded and have expectation 0. The proof relies on an analysis of the geometric properties of a geodesic random walk. To prove the theorem, we follow the steps as discussed in Section \ref{section:approach_proof}. We provide the details and show how we use the geometric results from Section \ref{section:geometry}. For completeness, let us recall the statement of the theorem.

\begin{theorem}[Cram\'er's theorem for Riemannian manifolds]\label{theorem:Cramer_sect_proof}
Let $(M,g)$ be a complete Riemannian manifold. Fix $x_0 \in M$ and let $\{\mu_x\}_{x\in M}$ be a collection of measures such that $\mu_x \in \cP(T_xM)$ for all $x \in M$.  For every $n \geq 1$, let $\{(\frac1n*\Ss)_j\}_{j\geq0}$ be a $\frac1n$-rescaled geodesic random walk started at $x_0$ with independent increments $\{X_j^n\}_{j\geq1}$, compatible with $\{\mu_x\}_{x\in M}$. Let $\{(\frac1n*\Ss)_n\}_{n \geq 0}$ be the associated empirical average process started at $x_0$.  Assume the increments are bounded and have expectation 0. Assume furthermore that the collection $\{\mu_x\}_{x\in M}$ satisfies the consistency property in Definition \ref{definition:consistency}. Then $\{(\frac1n*\Ss)_n\}_{n\geq0}$ satisfies in $M$ the LDP with good rate function
\begin{equation}\label{eq:rate_cramer_sect_proof}
I_M(x) = \inf\{\Lambda_{x_0}^*(v)| v \in \Exp_{x_0}^{-1}x\}
\end{equation}

\end{theorem}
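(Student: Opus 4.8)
The plan is to follow the scheme of Section~\ref{section:approach_proof}: transfer the large deviation problem from $M$ to the tangent space $T_{x_0}M$, where the classical Cram\'er theorem applies, while keeping the curvature-induced errors under control via the results of Section~\ref{section:geometry}. The first observation is that, since the increments satisfy $|X^n_j| \le r$ a.s.\ for some $r$, the triangle inequality for the Riemannian distance gives $d(x_0,(\tfrac1n*\Ss)_j) \le \sum_{i=1}^j d\bigl((\tfrac1n*\Ss)_{i-1},(\tfrac1n*\Ss)_i\bigr) \le \tfrac1n\sum_{i=1}^j|X^n_i| \le r$ for all $n$ and all $0\le j\le n$. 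Hence every $(\tfrac1n*\Ss)_j$ lies in the compact set $K:=\overline{B(x_0,r)}$ (compact by Hopf--Rinow), so $\iota(K)>0$, the curvature is bounded on $K$ and on a slightly larger compact $\tilde K$ containing all minimizing geodesics between points of $K$, and exponential tightness is immediate (take $K_\alpha=K$ for every $\alpha$). It therefore suffices to prove the large deviation upper bound for closed sets and the lower bound for open sets; goodness of $I_M$ then follows automatically, since $\Lambda^*_{x_0}$ is a good rate function and $\Exp_{x_0}$ is continuous.

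\textbf{Upper bound.} Fix $m\in\NN$ large enough (depending only on $\iota(K)$ and $r$) that each of the $m$ pieces of the walk --- the $l$-th piece running from time $n_{l-1}$ to $n_l$, with $n_l=l\lfloor m^{-1}n\rfloor$ --- stays inside the injectivity radius of its starting point. On the $l$-th piece I invert $\Exp$ to get $\tilde v^{n,m,l}\in T_{(\frac1n*\Ss)_{n_{l-1}}}M$ with $|\tilde v^{n,m,l}|\lesssim r/m$, and parallel-transport it back along an $\Ff_{n_{l-1}}$-measurable path to a vector $v^{n,m,l}\in T_{x_0}M$. Combining the first-order Taylor expansion of $\Exp^{-1}$ (Proposition~\ref{proposition:inverse_exp}) with the comparison of $\dd(\Exp)^{-1}$ to parallel transport (Corollary~\ref{corollary:inverse_exp_parallel}) and telescoping over the $\lfloor m^{-1}n\rfloor$ increments of the piece, one obtains
\[
\Bigl| v^{n,m,l} - \tfrac1n\textstyle\sum_{k=n_{l-1}+1}^{n_l}\tau^{-1}_{x_0(\frac1n*\Ss)_{k-1}}X^n_k\Bigr| \lesssim \tfrac1{nm}+\tfrac1{m^3},
\]
the $\tfrac1{m^3}$ coming from $\lfloor m^{-1}n\rfloor$ steps each contributing a curvature error $\lesssim \tfrac1n\,(r/m)^2$. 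The decisive point is now that, because the increments are independent in the sense of Definition~\ref{definition:independent_increments} and the family $\{\mu_x\}$ is consistent, the vectors $\tau^{-1}_{x_0(\frac1n*\Ss)_{k-1}}X^n_k\in T_{x_0}M$ form an i.i.d.\ sequence with law $\mu_{x_0}$ (Proposition~\ref{prop:logmgf}: parallel transport preserves the law). Chebyshev's inequality together with the displacement bound then yields, for all $\lambda_1,\dots,\lambda_m\in T_{x_0}M$,
\[
\limsup_{n\to\infty}\tfrac1n\log\EE\Bigl(e^{n\sum_{l=1}^m\inp{\lambda_l}{v^{n,m,l}}}\Bigr) \le \tfrac{C}{m^3}\sum_{l=1}^m|\lambda_l| + \tfrac1m\sum_{l=1}^m\Lambda_{x_0}(\lambda_l),
\]
which (by the standard Chebyshev argument for the upper bound on compacts, extended to closed sets by exponential tightness) gives the LDP upper bound for $(v^{n,m,1},\dots,v^{n,m,m})$ in $(T_{x_0}M)^m$ with rate $\tfrac1m\sum_l\sup_\lambda\{\inp{\lambda}{mv_l}-\Lambda_{x_0}(\lambda)-\tfrac{C}{m^2}|\lambda|\}$. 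Since the map sending this tuple to $(\tfrac1n*\Ss)_n$ is continuous, pulling back a closed $F\subset M$, using convexity to take all $v_l$ equal, noting $(\tfrac1m v,\dots,\tfrac1m v)$ is admissible for each $v\in\Exp_{x_0}^{-1}F$, and finally letting $m\to\infty$ (the monotone convergence of the lsc functions $\sup_\lambda\{\inp{\lambda}{v}-\Lambda_{x_0}(\lambda)-\tfrac C{m^2}|\lambda|\}\uparrow\Lambda^*_{x_0}(v)$ forces the infima to converge on the compact range) produces exactly $-\inf_{v\in\Exp_{x_0}^{-1}F}\Lambda^*_{x_0}(v)=-\inf_F I_M$.

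\textbf{Lower bound.} It suffices to show that for open $G$, any $x\in G$, and any $v$ with $\Exp_{x_0}v=x$ one has $\liminf_n\tfrac1n\log\PP((\tfrac1n*\Ss)_n\in G)\ge -\Lambda^*_{x_0}(v)$. Again cut the walk into $m$ pieces, but now the back-transport of the piece-vectors must be chosen so that closeness of all of them to $v$ in $T_{x_0}M$ forces $(\tfrac1n*\Ss)_n$ to be close to $\Exp_{x_0}v$ in $M$. Following Section~\ref{section:sketch_lower_bound}: set $y_i=\Exp_{x_0}(\tfrac im v)$, choose minimizing geodesics from each $y_i$ to the points of $M$, and define $v^{n,m,i}=\tau^{-1}_{y_0y_i}\tau^{-1}_{y_i(\frac1n*\Ss)_{n_{i-1}}}\tilde v^{n,m,i}$. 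The geodesic-spreading estimate (Proposition~\ref{proposition:distance_geodesic_point}), applied piece by piece, is precisely what shows that for a suitable $c\in(0,1)$ and $m$ large, $v^{n,m,i}\in B(v,c\epsilon^2)$ for all $i$ implies $(\tfrac1n*\Ss)_n\in B(\Exp_{x_0}v,\epsilon)$. Together with the per-piece displacement bound (as above, $v^{n,m,l}$ is within $\tfrac1{nm}+\tfrac1{m^3}$ of $\tfrac1n\sum\tau^{-1}X^n_k$) and the fact that for each $m$ these sums are empirical averages of i.i.d.\ $\mu_{x_0}$-distributed vectors, the classical Cram\'er theorem in $T_{x_0}M$ (with rate $w\mapsto\tfrac1m\Lambda^*_{x_0}(mw)$ for the $\lfloor m^{-1}n\rfloor$-term average) gives
\[
\liminf_{n\to\infty}\tfrac1n\log\PP\bigl((v^{n,m,1},\dots,v^{n,m,m})\in B(v,c\epsilon^2)^m\bigr)\ge -\Lambda^*_{x_0}(v),
\]
and hence $\liminf_n\tfrac1n\log\PP((\tfrac1n*\Ss)_n\in G)\ge -\Lambda^*_{x_0}(v)$, as required.

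\textbf{Main obstacle.} I expect the hardest part to be the lower bound, and within it the design of the parallel transport via the intermediate points $y_i$ so that the implication ``all $v^{n,m,i}$ in $B(v,c\epsilon^2)$ $\Rightarrow$ $(\tfrac1n*\Ss)_n\in B(\Exp_{x_0}v,\epsilon)$'' genuinely holds with error terms that can be absorbed as $m\to\infty$; this forces one to compose $m$ applications of Proposition~\ref{proposition:distance_geodesic_point} and to track carefully how the curvature constants and the quadratic $\epsilon$-loss accumulate over the pieces. A secondary technical point, used in both bounds, is the rigorous verification that $\{\tau^{-1}_{x_0(\frac1n*\Ss)_{k-1}}X^n_k\}_k$ is an i.i.d.\ sequence with law $\mu_{x_0}$: this combines the independence of increments (Definition~\ref{definition:independent_increments}) with the parallel-transport invariance of $\mu_{x_0}$ (equivalently of $\Lambda_{x_0}$, Proposition~\ref{prop:logmgf}), conditioning successively on the $\sigma$-algebras $\Ff_{k-1}$.
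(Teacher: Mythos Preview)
Your proposal follows essentially the same route as the paper: compactness from boundedness, cutting into $m$ pieces, the displacement estimate via Proposition~\ref{proposition:inverse_exp} and Corollary~\ref{corollary:inverse_exp_parallel}, the i.i.d.\ structure of the pulled-back increments (this is Proposition~\ref{prop:LDP_Exp} in the paper), Chebyshev in $(T_{x_0}M)^m$ for the upper bound, and the $y_i$-based transport together with Proposition~\ref{proposition:distance_geodesic_point} for the lower bound. Your identification of the main obstacles is accurate.

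One step in your upper-bound sketch, however, is treated more carefully by the paper than your phrase ``using convexity to take all $v_l$ equal'' suggests, and here a genuine idea is missing. The constraint set $\Psi_m^{-1}F\subset(T_{x_0}M)^m$ is \emph{not} convex---$\Psi_m$ is a nonlinear composition of exponential maps and parallel transports---so Jensen's inequality does not directly force the infimum of $\tfrac1m\sum_l\Lambda_m^*(mv_l)$ onto diagonal tuples; in a curved manifold, $\Exp_{x_0}(\sum_l v_l)$ need not equal $\Psi_m(v_1,\ldots,v_m)$. What the paper does instead is interpret $\tfrac1m\sum_l\Lambda_m^*(mv_l)$ as the action $\int_0^1\Lambda_m^*(\dot\gamma(t))\,dt$ of the associated piecewise-geodesic path $\gamma$ in $M$, and then invoke the variational result \cite[Proposition~8.3]{KRV18} that \emph{geodesics} minimize this action among paths with the same endpoint. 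To meet the hypotheses of that proposition (strict convexity and differentiability of $\Lambda_m^*$), the paper first weakens the error term from $\tfrac{C}{m^2}|\lambda|$ to $\tfrac{C}{m^2}(|\lambda|^2+1)$ and appeals to Lemma~\ref{lemma:convex_analysis}. Only after this detour does the reduction to tuples $(\tfrac1m v,\ldots,\tfrac1m v)$ with $v\in\Exp_{x_0}^{-1}F$ go through. You should anticipate this step; it is the one place in the upper bound where the curved geometry intervenes in a way not already captured by the displacement estimates of Section~\ref{section:geometry}.
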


In Section \ref{section:upper_bound} we prove the upper bound of the large deviation principle for $\{(\frac1n*\Ss)_n\}_{n\geq 1}$ in $M$, while in Section \ref{section:lower_bound} we prove the lower bound. More specifically, Theorem \ref{theorem:Cramer_sect_proof} follows immediately from Proposition \ref{prop:upper_bound_LDP} together with Proposition \ref{prop:lower_bound_LDP}.\\

However, before we can prove the upper and lower bound of the large deviation principle for $\{(\frac1n*\Ss)_n\}_{n\geq 1}$, we first need some general results and estimates. From here on, we fix $r > 0$ to be the uniform bound on the increments of the random walk. By the triangle inequality, we find
\[
d\left(\left(\frac1n*\Ss\right)_k,x_0\right) \leq \frac1n\sum_{l=1}^k |X_k^n| \leq \frac knr \leq r
\] 
for all $0 \leq k \leq n$. Consequently, for every $n \geq 0$ and $1 \leq k \leq n$ we have
\[
\left(\frac1n*\Ss\right)_k \in \overline{B(x_0,r)} =: K.
\]
By completeness of $M$, $K$ is compact since it is closed and bounded.

Now consider the process $Z_n$ in $T_{x_0}M$ given by
\[
Z_n = \frac1n\sum_{k=1}^n \tau_{x_0(\frac1n*\Ss)_{k-1}}^{-1}X_k^n.
\]
Here, the parallel transport $\tau_{x_0(\frac1n*\Ss)_{k-1}}$ is considered along the piecewise geodesic path traced out by the geodesic random walk. From Cram\'er's theorem for vector spaces it follows that $\{Z_n\}_{n\geq0}$ satisfies the large deviation principle in $T_{x_0}M$, which we will show in the following proposition.

\begin{proposition}\label{prop:LDP_Exp}
Let the assumptions of Theorem \ref{theorem:Cramer_sect_proof} be satisfied. For every $n \geq 0$, define $Z_n = \frac1n\sum_{k=1}^n \tau_{x_0(\frac1n*\Ss)_{k-1}}^{-1}X_k^n \in T_{x_0}M$. Let $\Lambda_{x_0}(\lambda) = \log\EE(e^{{\lambda}{X_1}})$ be the log moment generating function of the increments. Then $\{Z_n\}_{n\geq0}$ satisfies the large deviation principle in $T_{x_0}M$ with good rate function 
\[
I(v) = \Lambda_{x_0}^*(v) := \sup_{\lambda \in T_{x_0}M} \{\inp{\lambda}{v} - \Lambda_{x_0}(\lambda)\}.
\]
\end{proposition}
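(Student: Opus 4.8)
The plan is to reduce the statement to the classical Cram\'er theorem on the finite-dimensional vector space $T_{x_0}M \cong \RR^N$. Write $\tilde X_k^n := \tau_{x_0(\frac1n*\Ss)_{k-1}}^{-1}X_k^n \in T_{x_0}M$, so that $Z_n = \frac1n\sum_{k=1}^n \tilde X_k^n$. The key claim is that for each fixed $n$ the family $\tilde X_1^n,\dots,\tilde X_n^n$ is i.i.d.\ with common law $\mu_{x_0}$. Granting this, the law of $Z_n$ is the image of $\mu_{x_0}^{*n}$ under $v\mapsto v/n$, which is exactly the law of $\frac1n\sum_{k=1}^n W_k$ for a single i.i.d.\ sequence $(W_k)_{k\geq1}$ with law $\mu_{x_0}$; in particular it is irrelevant that the arrays $\{\tilde X_k^n\}_{k=1}^n$ may live on different probability spaces as $n$ varies. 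Since the increments are bounded by $r$, we have $\Lambda_{x_0}(\lambda)\leq|\lambda|r<\infty$ for every $\lambda\in T_{x_0}M$, so the domain of $\Lambda_{x_0}$ is all of $T_{x_0}M$ and Cram\'er's theorem in $\RR^N$ (see e.g.\ \cite{DZ98}) yields the LDP for $\{Z_n\}_{n\geq0}$ with good rate function $\Lambda_{x_0}^*$.

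To establish the i.i.d.\ claim I would argue by induction on $k$, using the filtration $\{\Ff_k\}$ from Definition \ref{definition:independent_increments}. Fix $n$. First note that $\tilde X_1^n,\dots,\tilde X_{k-1}^n$ are $\Ff_{k-1}$-measurable, since $(\frac1n*\Ss)_j$ and the piecewise-geodesic path of the walk up to step $j$ are $\Ff_j$-measurable, hence so is the parallel transport $\tau_{x_0(\frac1n*\Ss)_j}$ taken along that path, for each $j\leq k-1$. By Definition \ref{definition:independent_increments}, the conditional law of $X_k^n$ given $\Ff_{k-1}$ is $\mu_{(\frac1n*\Ss)_{k-1}}$. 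Pushing forward by the $\Ff_{k-1}$-measurable linear isometry $\tau_{x_0(\frac1n*\Ss)_{k-1}}^{-1}$ and applying the consistency property of Definition \ref{definition:consistency} in the form $\mu_{(\frac1n*\Ss)_{k-1}}=\mu_{x_0}\circ\tau_{x_0(\frac1n*\Ss)_{k-1}}^{-1}$, we obtain that the conditional law of $\tilde X_k^n$ given $\Ff_{k-1}$ is
\[
\mu_{(\frac1n*\Ss)_{k-1}}\circ\tau_{x_0(\frac1n*\Ss)_{k-1}}=\mu_{x_0}.
\]
As this conditional law is the deterministic measure $\mu_{x_0}$, the vector $\tilde X_k^n$ is independent of $\Ff_{k-1}$, and hence of $(\tilde X_1^n,\dots,\tilde X_{k-1}^n)$, with law $\mu_{x_0}$. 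Iterating from $k=1$ to $k=n$ proves the claim, and the proposition follows from the reduction above.

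The only delicate point is the measurability bookkeeping underlying the inductive step: one must make precise that the increments $X_k^n$ take values in the moving tangent spaces $T_{(\frac1n*\Ss)_{k-1}}M$, that the path-dependent transport $\tau_{x_0(\frac1n*\Ss)_{k-1}}$ is a measurable functional of $\Ff_{k-1}$, and that the conditional law of $\tilde X_k^n$ can legitimately be read off by testing the identity in Definition \ref{definition:independent_increments} against bounded continuous functions of the form $v\mapsto g\big(\tilde X_1^n,\dots,\tilde X_{k-1}^n,\tau_{x_0(\frac1n*\Ss)_{k-1}}^{-1}v\big)$. It is precisely here that the consistency hypothesis does the work: it is exactly the condition that cancels the random $\Ff_{k-1}$-dependence of the transported increment. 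Once this is in place, the argument is a direct invocation of finite-dimensional Cram\'er with no further geometry required.
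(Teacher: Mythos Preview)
Your proposal is correct and follows essentially the same route as the paper: define the transported increments, use the conditional independence from Definition~\ref{definition:independent_increments} together with the consistency property to show that each transported increment has law $\mu_{x_0}$ and is independent of the previous ones, then invoke Cram\'er's theorem in $T_{x_0}M$. The only cosmetic difference is that the paper identifies the distribution via an MGF computation and checks pairwise independence with indicator functions, whereas you argue directly with conditional laws and the filtration; if anything, your formulation yields joint independence more transparently.
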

\begin{proof}
Define $Y_k^n = \tau_{x_0(\frac1n*\Ss)_{k-1}}^{-1}X_k^n \in T_{x_0}M$. We compute for any $\lambda \in T_{x_0}M$
\begin{align*}
\EE(e^{\inp{\lambda}{Y_k^n}})
&=
\EE\left(\EE\left(e^{\inp{\lambda}{\tau_{x_0(\frac1n*\Ss)_{k-1}}^{-1}X_k^n}}|\Ff_{k-1}\right)\right)
\\
&=
\EE\left(\int_{T_{(\frac1n*\Ss)_{k-1}}M} e^{\inp{\lambda}{\tau_{x_0(\frac1n*\Ss)_{k-1}}^{-1}v}} \mu_{(\frac1n*\Ss)_{k-1}}(\dd v)\right)
\\
&=
\EE\left(\int_{T_{x_0}M} e^{\inp{\lambda}{v}} \mu_{x_0}(\dd v)\right)
\\
&=
\int_{T_{x_0}M} e^{\inp{\lambda}{v}} \mu_{x_0}(\dd v).
\end{align*}
Here we used in the second line that $\tau_{x_0(\frac1n*\Ss)_{k-1}}^{-1}$ is measurable with respect to $\Ff_{k-1}$, together with the fact that the increments are independent (see Definition \ref{definition:independent_increments}). In the third line we applied Proposition \ref{prop:logmgf}, using that the increments are identically distributed. It follows that $Y_k^n$ is distributed according to $\mu_{x_0}$.  

Consequently, the result follows from Cram\'er's theorem once we show that $Y_k^n$ and $Y_l^n$ are independent whenever $k \neq l$. To this end, assume without loss of generality that $l < k$. Then for measurable sets $A,B \subset T_{x_0}M$ we find in a similar way as above that
\begin{align*}
&\PP(Y_l^n \in A, Y_k^n \in B)
\\
&=
\EE(I(Y_l^n \in A)\EE(I(Y_k^n \in B) | \Ff_{k-1}))
\\
&=
\EE\left( I(Y_l^n \in A) \int_{T_{(\frac1n*\Ss)_{k-1}}M} I\left(\tau_{x_0(\frac1n*\Ss)_{k-1}}^{-1}v \in B\right) \mu_{(\frac1n*\Ss)_{k-1}}(\dd v)\right)
\\
&=
\EE\left( I(Y_l^n \in A) \int_{T_{x_0}M} I\left(v \in B\right) \mu_{x_0}(\dd v)\right)
\\
&=
\EE(I(Y_l^n \in A))\EE(I(Y_k^n \in B))
\\
&=
\PP(Y_l^n \in A)\PP(Y_k^n \in B),
\end{align*}
where $I$ denotes the indicator function. Above, we used in the one but last line that $Y_k^n$ is distributed according to $\mu_{x_0}$. We conclude that the $Y_l^n$ and $Y_k^n$ are independent. 
\end{proof}

\begin{remark}
Note that in the proof of Proposition \ref{prop:LDP_Exp} we did not use along which path we performed the parallel transport $\tau_{x_0(\frac1n*\Ss)_{k-1}}^{-1}$, only that it was measurable with respect to $\Ff_{k-1}$. Consequently, the result holds for any choice of parallel transport, as long as it is measurable with respect to $\Ff_{k-1}$. 
\end{remark}

Proposition \ref{prop:LDP_Exp} suggests we should try to map the sequence $\{(\frac1n*\Ss)_n\}_{n\geq0}$ from $M$ to $T_{x_0}M$ in such a way that it will be close to the sequence $\{Z_n\}_{n\geq0}$.

To this end, recall that if we assume that  $r < \iota(x_0)$, then for all $n$ and all $0 \leq k \leq n$ we can uniquely define 
\[
v_k^n \in \Exp_{x_0}^{-1}\left(\left(\frac1n*\Ss\right)_k\right) \subset T_{x_0}M
\]
with $|v_k^n| < \iota(x_0)$, because $d((\frac1n*\Ss)_k,x_0) \leq r < \iota(x_0)$. 

As explained in Step 2 of Section \ref{section:sketch_upper_bound}, we have the following estimate. The first term of the upper bound in \eqref{eq:sum_parallel_transport} follows from replacing $v_n^l$ with a sum of differentials of the Riemannian exponential map, while the second term follows from replacing these differentials with parallel transport.

\begin{proposition}\label{prop:comparison_independent_sum}
Let the assumptions of Theorem \ref{theorem:Cramer_sect_proof} be satisfied. Additionally, let $r$ be the uniform bound of the increments and assume that $r < \iota(x_0)$. Then there exists a constant $C > 0$ such that
\begin{equation}\label{eq:sum_parallel_transport}
\left|v_l^n - \frac1n\sum_{k=1}^l \tau_{x_0(\frac1n*\Ss)_{k-1}}^{-1}X_k^n\right| \leq C\frac l{n^2} + Cr^2\frac{l^3}{n^3}
\end{equation}
for all $n$ and all $1 \leq l \leq n$. 
\end{proposition}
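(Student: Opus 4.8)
The plan is to estimate the one-step increments $v_k^n-v_{k-1}^n$ of the pulled-back walk and then telescope. Since the increments are bounded by $r$, both the walk itself and each individual geodesic segment $s\mapsto\Exp_{(\frac1n*\Ss)_{k-1}}(sX_k^n)$, $s\in[0,1/n]$, stay inside the compact ball $K=\overline{B(x_0,r)}$: indeed $d((\frac1n*\Ss)_{k-1},x_0)\le\frac{k-1}{n}r$, so the intermediate points of the $k$-th segment lie within distance $\frac kn r\le r$ of $x_0$. Because $r<\iota(x_0)$, the restriction of $\Exp_{x_0}$ to $\overline{B(0,r)}$ is a diffeomorphism onto $K$, so $v_k^n=\Exp_{x_0}^{-1}((\frac1n*\Ss)_k)$ is well defined for every $k\le n$, with $v_0^n=0$. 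Throughout I take $\tau_{x_0(\frac1n*\Ss)_{k-1}}$ to be parallel transport along the radial geodesic $t\mapsto\Exp_{x_0}(tv_{k-1}^n)$; by the Remark following Proposition \ref{prop:LDP_Exp} any $\Ff_{k-1}$-measurable choice of parallel transport may be used in the definition of $Z_n$, so there is no loss.

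For each $k$ I would first apply Proposition \ref{proposition:inverse_exp} with base point $x_0$, $\tilde K=K$, $x=(\frac1n*\Ss)_{k-1}$, $v=X_k^n$ and $T=1/n$: since the $k$-th segment stays in $K$ and $|X_k^n|\le r$, there is a constant $C_1>0$ depending only on $K$ and $r$ with
\[
\Bigl|\,v_k^n-v_{k-1}^n-\tfrac1n\,\dd(\Exp_{x_0})^{-1}_{v_{k-1}^n}(X_k^n)\,\Bigr|\le\frac{C_1}{n^2}.
\]
Then I would apply Corollary \ref{corollary:inverse_exp_parallel} with $w=v_{k-1}^n\in B(0,\iota(x_0))$ and $u=X_k^n\in T_{(\frac1n*\Ss)_{k-1}}M$, together with $|v_{k-1}^n|=d((\frac1n*\Ss)_{k-1},x_0)\le\frac{k-1}{n}r$, to get a constant $C_2>0$ depending only on $K$ with
\[
\Bigl|\,\dd(\Exp_{x_0})^{-1}_{v_{k-1}^n}(X_k^n)-\tau^{-1}_{x_0(\frac1n*\Ss)_{k-1}}X_k^n\,\Bigr|\le C_2\,|X_k^n|\,|v_{k-1}^n|^2\le C_2\,r^3\,\frac{(k-1)^2}{n^2}.
\]
Dividing the second display by $n$ and combining it with the first via the triangle inequality yields, for every $1\le k\le l$,
\[
\Bigl|\,(v_k^n-v_{k-1}^n)-\tfrac1n\,\tau^{-1}_{x_0(\frac1n*\Ss)_{k-1}}X_k^n\,\Bigr|\le\frac{C_1}{n^2}+C_2\,r^3\,\frac{k^2}{n^3}.
\]

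Finally, since $v_0^n=0$ one has $v_l^n-\frac1n\sum_{k=1}^l\tau^{-1}_{x_0(\frac1n*\Ss)_{k-1}}X_k^n=\sum_{k=1}^l\bigl((v_k^n-v_{k-1}^n)-\frac1n\tau^{-1}_{x_0(\frac1n*\Ss)_{k-1}}X_k^n\bigr)$, so summing the previous bound over $k=1,\dots,l$ and using $\sum_{k=1}^l k^2\le l^3$ gives
\[
\Bigl|\,v_l^n-\tfrac1n\sum_{k=1}^l\tau^{-1}_{x_0(\frac1n*\Ss)_{k-1}}X_k^n\,\Bigr|\le C_1\frac{l}{n^2}+C_2 r^3\frac{l^3}{n^3},
\]
and taking $C=\max(C_1,C_2 r)$, so that $C_2 r^3=(C_2 r)r^2\le Cr^2$, yields the claimed estimate. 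The bulk of the work is bookkeeping rather than ideas: the essential point is to make every constant uniform in $k,n,l$, which is precisely why one works over the fixed compact set $K=\overline{B(x_0,r)}$ and exploits the uniform bound $r$ on the increments. The one genuinely delicate observation is that the error from replacing $\dd(\Exp_{x_0})^{-1}$ by parallel transport scales like $|v_{k-1}^n|^2\le(\tfrac{k-1}{n}r)^2$, and it is the summation $\sum_k k^2\le l^3$ of these terms that produces the cubic factor $l^3/n^3$, whereas the $O(1/n^2)$ Taylor error at each of the $l$ steps only contributes $l/n^2$.
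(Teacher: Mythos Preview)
Your proof is correct and follows essentially the same approach as the paper: apply Proposition~\ref{proposition:inverse_exp} for the $O(1/n^2)$ Taylor error, apply Corollary~\ref{corollary:inverse_exp_parallel} for the $O(|v_{k-1}^n|^2)$ error in replacing the differential by parallel transport, and telescope. The only cosmetic differences are that the paper splits the sum into two pieces before estimating (rather than estimating per step and then summing) and uses the cruder bound $|v_{k-1}^n|\le r\,l/n$ uniformly in $k\le l$ instead of your $\sum_{k\le l}k^2\le l^3$; your remark fixing parallel transport along the radial geodesic is in fact a point the paper glosses over.
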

\begin{proof}
Recall that for all $n$ and all $0 \leq k \leq n$ we have that $(\frac1n*\Ss)_k$ is in the compact set $K = \overline{B(x_0,r)}$. Consequently, 
\[
v_k^n \in \overline{B(0,r)} \subset T_{x_0}M
\] 
for all $n$ and all $0 \leq k \leq n$. But then it follows from Proposition \ref{proposition:inverse_exp} that for every $0 \leq k \leq n$ there exists a constant $C_k > 0$ only depending on the norms of $v^n_k,v^n_{k+1}$ and $X_k^n$ such that
\begin{equation}\label{eq:induction_step}
\left|v_{k+1}^n - \left(v_k^n + \frac1n\dd(\Exp_{x_0})^{-1}_{v_k^n}X_{k+1}^n\right)\right| \leq  C_k\frac1{n^2}.
\end{equation}

Because each of the norms $|v_k^n|, |v_{k+1}^n|$ and $|X_k^n|$ are bounded by $r$, we conclude that we can take $C_k = C$ independent of $k$.\\

Turning to the proof of the statement, by the triangle inequality we have
\begin{align*}
&\left|v_l^n - \frac1n\sum_{k=1}^l \tau_{x_0(\frac1n*\Ss)_{k-1}}^{-1}X_k^n\right| 
\\
&\leq
\left|v_l^n - \frac1n\sum_{k=1}^l \dd(\Exp_{x_0})_{v_{k-1}^n}^{-1}X_k^n\right| + \frac1n\sum_{k=1}^l\left|\dd(\Exp_{x_0})_{v_{k-1}^n}^{-1}X_k^n -  \tau_{x_0(\frac1n*\Ss)_{k-1}}^{-1}X_k^n\right|.
\end{align*}
We estimate both terms separately.

For the first term, we write $v_l^n$ as the telescoping sum
\[
v_l^n = \sum_{k=1}^l (v_k^n - v_{k-1}^n).
\]
Consequently, we obtain
\begin{equation} \label{eq:sum_differential}
\begin{aligned} 
\left|v_l^n - \frac1n\sum_{k=1}^l \dd(\Exp_{x_0})_{v_{k-1}^n}^{-1}X_k^n\right| 
&\leq 
\sum_{k=1}^l |v_k^n - v_{k-1}^n -  \dd(\Exp_{x_0})_{v_{k-1}^n}^{-1}X_k^n| 
\\
&\leq
C\frac l{n^2}, 
\end{aligned}
\end{equation}
where the last line follows from the estimate in \eqref{eq:induction_step}. 

For the other term, observe that by Corollary \ref{corollary:inverse_exp_parallel}, there exists a constant $C > 0$ only depending on the compact set $\overline{B(0,r)}$ and $r$, such that
\begin{equation}\label{eq:replacing_diff_par}
|\dd(\Exp_{x_0})_{v_{k-1}^n}^{-1}X_k^n - \tau_{x_0(\frac1n*\Ss)_{k-1}}^{-1}X_k^n| \leq C|v_{k-1}^n|^2
\end{equation}
But then we find
\begin{align*}
\frac1n\sum_{k=1}^l|\dd(\Exp_{x_0})_{v_{k-1}^n}^{-1}X_k^n - \tau_{x_0(\frac1n*\Ss)_{k-1}}^{-1}X_k^n| 
&\leq 
C\frac1n\sum_{k=1}^l |v_{k-1}^n|^2
\\
&\leq
Cr^2\frac{l^3}{n^3},
\end{align*}
where in the last line we used that $|v_{k-1}^n| \leq r\frac{k-1}{n} \leq r\frac ln$ for any $1 \leq k \leq l$.
\end{proof}

\begin{remark}
The estimate in Proposition \ref{prop:comparison_independent_sum} is one of the most important ingredients of the proof of Theorem \ref{theorem:Cramer_sect_proof}. Indeed, it allows us in some sense to connect large deviations for $\{(\frac1n*\Ss)_n\}_{n\geq0}$ in $M$ to large deviations for the sums $\{\frac1n\sum_{k=1}^n \tau_{x_0(\frac1n*\Ss)_{k-1}}^{-1}X_k^n\}_{n\geq0}$ in the tangent space $T_{x_0}M$. Consequently, by making appropriate assumptions on the sequence $\{\frac1n\sum_{k=1}^n \tau_{x_0(\frac1n*\Ss)_{k-1}}^{-1}X_k^n\}_{n\geq0}$, for example in the spirit of the G\"artner-Ellis theorem (see e.g. \cite{Hol00,DZ98}), we can obtain more general results than Cram\'er's theorem for geodesic random walks in a similar way.
\end{remark}

One might hope to combine Propositions \ref{prop:LDP_Exp} and \ref{prop:comparison_independent_sum} to prove that $\{v_n^n\}_{n\geq0}$ satisfies in $T_{x_0}M$ the large deviation principle. Unfortunately, the upper bound found in Proposition \ref{prop:comparison_independent_sum} gives an unwanted contribution on the exponential scale. Indeed, taking $l = n$, we find that the upper bound in \eqref{eq:sum_parallel_transport} is $\Oo(1)$, which results in the fact that we get stuck with a constant as explained in Step 1 of Section \ref{section:sketch_upper_bound}. In an attempt to reduce this term in the upper bound, we cut up the random walk in finitely many pieces and analyse the pieces separately.

To this end, recall that
\[
d\left(\left(\frac1n*\Ss\right)_k,x_0\right) \leq \frac1n\sum_{l=1}^k |X_k^n| \leq \frac knr. 
\]
Now observe that $\iota(\overline{B(x_0,r)}) > 0$, because $\overline{B(x_0,r)}$ is compact (see \eqref{eq:injectivity_radius_set} for the definition of the injectivity radius of a set). Consequently, if $k \leq \frac{n\iota(\overline{B(x_0,r)})}{2r}$, then
\begin{equation}\label{eq:injectivity_radius}
d\left(\left(\frac1n*\Ss\right)_k,x_0\right) \leq \frac{\iota(\overline{B(x_0,r)})}{2} < \iota(\overline{B(x_0,r)}).
\end{equation}

Now let $m \in \NN$ such that $m \geq \frac{2r}{\iota(\overline{B(x_0,r)})}$. For $0 \leq l \leq m-1$ we define $n_l = l\lfloor m^{-1}n\rfloor$ and $n_m = n$. By \eqref{eq:injectivity_radius}, for every $0 \leq l \leq m-1$ and $1 \leq k \leq n_{l+1}-n_l$ we can uniquely define
\begin{equation}\label{eq:pullback_vectors}
\tilde v_k^{n,m,l} \in \Exp_{(\frac1n*\Ss)_{n_l}}^{-1}\left(\left(\frac1n*\Ss\right)_{n_l+k}\right) \subset T_{(\frac1n*\Ss)_{n_l}}M
\end{equation}
with $|\tilde v_k^{n,m,l}| < \iota((\frac1n*\Ss)_{n_l})$, because $n_{l+1} - n_l \leq nm^{-1} \leq \frac{n\iota(\overline{B(x_0,r)})}{2r}$. Finally, we set
\[
v_k^{n,m,l} = \tau_{x_0(\frac1n*\Ss)_{n_l}}^{-1}\tilde v_k^{n,m,l} \in T_{x_0}M,
\]
where parallel transport $\tau_{x_0(\frac1n*\Ss)_{n_l}}^{-1}$ is taken along the piecewise geodesic path through the points $(\frac1n*\Ss)_{n_1}, \ldots, (\frac1n*\Ss)_{n_{l-1}}$ .\\

Alongside this division of the random walk into pieces, we define a map $\Psi_m: (T_{x_0}M)^m \to M$ to identify the tuple $(v_{\lfloor m^{-1}n\rfloor}^{n,m,1},\ldots,v_{\lfloor m^{-1}n\rfloor}^{n,m,m})$ with $(\frac1n*\Ss)_n$, just like we used the Riemannian exponential map to identify $v_n^n$ and $(\frac1n*\Ss)_n$ before. Essentially, $\Psi_m$ is an $m$ time recursive application of the Riemannian exponential map. 

More precisely, let  $(v_1,\ldots,v_m) \in (T_{x_0}M)^m$ be given and define $x_1 = \Exp_{x_0}(v_1)$. Now, suppose $x_1,\ldots,x_i$ are given. Denote by $\tau_{x_0x_i}$ parallel transport along the constructed piecewise geodesic path via $x_1,\ldots,x_{i-1}$. Then we define $\tilde v_{i+1} = \tau_{x_0x_i}v_{i+1}$ and set $x_{i+1} = \Exp_{x_i}(\tilde v_{i+1})$. Finally, we define $\Psi_m(v_1,\ldots,v_m) = x_m$. In particular, we have for every $x \in M$ and $v \in \Exp_{x_0}^{-1}x$ that $(\frac1mv,\ldots,\frac1mv) \in \Psi_m^{-1}x$. To see this, observe that the path that $\Psi_m$ constructs is precisely the geodesic $\gamma_v(t) = \Exp_{x_0}(tv)$, because the speed of a geodesic is parallel along the geodesic. Furthermore, the map $\Psi_m$ is continuous as a composition of continuous maps.

\begin{remark}
Strictly speaking, if we divide the random walk into $m$ pieces as above, for the last piece we can only guarantee that it has at most $\lfloor m^{-1}n\rfloor + m$ increments, since $n$ need not be divisible by $m$. Additionally, this implies that $\Psi_m(v_{\lfloor m^{-1}n\rfloor}^{n,m,1},\ldots,v_{\lfloor m^{-1}n\rfloor}^{n,m,m})$ is only equal to $\left(\frac1n*\Ss\right)_n$ when $n$ is divisible by $m$. However, for every $m \in \NN$ it holds that
$$
d\left(\Psi_m(v_{\lfloor m^{-1}n\rfloor}^{n,m,1},\ldots,v_{\lfloor m^{-1}n\rfloor}^{n,m,m}),\left(\frac1n*\Ss\right)_n\right) = \Oo\left(\frac1n\right).
$$
Since in the proofs to follow we always first let $n$ tend to infinity before $m$, this has no influence on the results and arguments. Therefore, to avoid unnecessarily complicated notation and reasoning, we proceed with the above. 
\end{remark}

\subsection{Upper bound of the large deviation principle for $\{(\frac1n*\Ss)_n\}_{n\geq0}$}\label{section:upper_bound}
In this section we prove the large deviation upper bound for $\{(\frac1n*\Ss)_n\}_{n\geq0}$. Before we can do this, we first need some preliminary results.

\begin{proposition}[Upper bound for $\EE(e^{n\inp{\lambda}{v_n^n}})$]\label{prop:upper_bound_mgf}
Let the assumptions of Theorem \ref{theorem:Cramer_sect_proof} be satisfied. Additionally, let $r$ be the uniform bound of the increments and assume that $r < \iota(x_0)$. Then there exists a constanct $C > 0$ such that for all $n$ and all $1 \leq l \leq n$
\[
\EE(e^{n\inp{\lambda}{v_l^n}}) \leq e^{ln^{-1}|\lambda|C}e^{|\lambda| C r^2 l^3n^{-2}} M_{x_0}(\lambda)^l
\]
for all $\lambda \in T_{x_0}M$. Here, $M_{x_0}(\lambda) = \int_{T_{x_0}M} e^{\inp{\lambda}{v}}\mu_{x_0}(\dd v)$.
\end{proposition}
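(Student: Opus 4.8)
The plan is to combine the pathwise comparison estimate of Proposition~\ref{prop:comparison_independent_sum} with the Cauchy--Schwarz inequality, and then to evaluate the remaining moment generating function by iterated conditioning, exactly as in the proof of Proposition~\ref{prop:LDP_Exp}. The constant $C$ in the statement will be the one produced by Proposition~\ref{prop:comparison_independent_sum}.

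First I would set $Y_k^n = \tau_{x_0(\frac1n*\Ss)_{k-1}}^{-1}X_k^n \in T_{x_0}M$, so that $\sum_{k=1}^l \inp{\lambda}{Y_k^n} = n\inp{\lambda}{\frac1n\sum_{k=1}^l Y_k^n}$. By Cauchy--Schwarz together with Proposition~\ref{prop:comparison_independent_sum}, and using that the constant $C$ there is independent of $n$, $l$ and of the trajectory, one obtains the pathwise bound
\[
n\inp{\lambda}{v_l^n} \leq n|\lambda|\left(C\frac{l}{n^2} + Cr^2\frac{l^3}{n^3}\right) + \sum_{k=1}^l \inp{\lambda}{Y_k^n} = |\lambda|\left(C\frac{l}{n} + Cr^2\frac{l^3}{n^2}\right) + \sum_{k=1}^l \inp{\lambda}{Y_k^n}.
\]
Since the first summand on the right is deterministic, exponentiating and taking expectations yields
\[
\EE\left(e^{n\inp{\lambda}{v_l^n}}\right) \leq e^{ln^{-1}|\lambda|C}\,e^{|\lambda|Cr^2l^3n^{-2}}\,\EE\left(e^{\sum_{k=1}^l \inp{\lambda}{Y_k^n}}\right),
\]
so it remains to show $\EE\big(e^{\sum_{k=1}^l \inp{\lambda}{Y_k^n}}\big) = M_{x_0}(\lambda)^l$.

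This last identity I would prove by induction on $l$, conditioning on $\Ff_{l-1}$. Since $\tau_{x_0(\frac1n*\Ss)_{l-1}}^{-1}$ is $\Ff_{l-1}$-measurable — it is transport along the piecewise geodesic path traced out by the walk up to time $l-1$ — and the increments are independent in the sense of Definition~\ref{definition:independent_increments}, one computes
\[
\EE\left(e^{\inp{\lambda}{Y_l^n}}\,\big|\,\Ff_{l-1}\right) = \int_{T_{(\frac1n*\Ss)_{l-1}}M} e^{\inp{\lambda}{\tau_{x_0(\frac1n*\Ss)_{l-1}}^{-1}v}}\,\mu_{(\frac1n*\Ss)_{l-1}}(\dd v) = \int_{T_{x_0}M} e^{\inp{\lambda}{v}}\,\mu_{x_0}(\dd v) = M_{x_0}(\lambda),
\]
where the middle equality is the consistency property of the family $\{\mu_x\}_{x\in M}$ (equivalently Proposition~\ref{prop:logmgf}), valid because parallel transport is an isometry. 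Pulling this deterministic factor out of the expectation and iterating gives $M_{x_0}(\lambda)^l$, which completes the argument.

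The proof is short once Propositions~\ref{prop:comparison_independent_sum} and~\ref{prop:logmgf} are available; the only point deserving attention is the $\Ff_{l-1}$-measurability of the parallel transport maps, which is exactly what licenses the conditioning step and is built into the construction by transporting along the trajectory of the random walk. I do not anticipate any genuine obstacle here, as the geometric difficulty has already been absorbed into the comparison estimate \eqref{eq:sum_parallel_transport}.
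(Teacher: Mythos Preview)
Your proposal is correct and follows essentially the same route as the paper: apply Cauchy--Schwarz together with Proposition~\ref{prop:comparison_independent_sum} to reduce to $\EE\big(e^{\sum_{k=1}^l \inp{\lambda}{Y_k^n}}\big)$, then identify this as $M_{x_0}(\lambda)^l$ via the independence and consistency properties. The only cosmetic difference is that the paper cites the i.i.d.\ structure established in the proof of Proposition~\ref{prop:LDP_Exp} to factor the expectation, whereas you write out the iterated conditioning explicitly; these are the same argument.
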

\begin{proof}
By Proposition \ref{prop:comparison_independent_sum} and the Cauchy-Schwarz inequality, there exists a constant $C > 0$ such that
\begin{align*}
\inp{\lambda}{v_l^n} - \frac1n\sum_{k=1}^l\inp{\lambda}{\tau_{x_0(\frac1n*\Ss)_{k-1}}^{-1}X_k^n}
&\leq
|\lambda|\left|v_l^n - \frac1n\sum_{k=1}^l \tau_{x_0(\frac1n*\Ss)_{k-1}}^{-1}X_k^n\right|
\\
&\leq
C|\lambda|\frac l{n^2} + C|\lambda|r^2\frac{l^3}{n^3}.
\end{align*}
But then we can estimate
\begin{align*}
\EE\left(e^{n\inp{\lambda}{v_l^n}}\right)
&=
\EE\left(e^{\sum_{k=1}^l {\inp{\lambda}{\tau_{x_0(\frac1n*\Ss)_{k-1}}^{-1}X_k^n}}}e^{n\inp{\lambda}{v_l^n} - \sum_{k=1}^l {\inp{\lambda}{\tau_{x_0(\frac1n*\Ss)_{k-1}}^{-1}X_k^n}}}\right) 
\\
&\leq
e^{C|\lambda|ln^{-1}}e^{C|\lambda|r^2l^3n^{-2}}\EE\left(e^{\sum_{k=1}^l {\inp{\lambda}{\tau_{x_0(\frac1n*\Ss)_{k-1}}^{-1}X_k^n}}}\right).
\end{align*}

As shown in the proof of Proposition \ref{prop:LDP_Exp}, for every $1 \leq k \leq n$ we have that $\tau_{x_0(\frac1n*\Ss)_{k-1}}^{-1}X_k^n$ is distributed accodring to $\mu_{x_0}$ and is independent of $\tau_{x_0(\frac1n*\Ss)_{l-1}}^{-1}X_l^n$ for any $l \neq k$. Consequentely, we find that
\begin{align*}
\EE\left(e^{\sum_{k=1}^l {\inp{\lambda}{\tau_{x_0(\frac1n*\Ss)_{k-1}}^{-1}X_k^n}}}\right)
&=
\prod_{k=1}^l \EE\left(e^{\inp{\lambda}{\tau_{x_0(\frac1n*\Ss)_{k-1}}^{-1}X_k^n}}\right)
\\
&=
M_{x_0}(\lambda)^l,
\end{align*}
where the last step follows from Proposition \ref{prop:logmgf}.

\end{proof}

Using Proposition \ref{prop:upper_bound_mgf}, we obtain the following inequality, which is key in deriving the large deviations upper bound for $\{(\frac1n*\Ss)_n\}_{n\geq0}$.   

\begin{proposition}\label{prop:upper_bound_LDP_tangent}
Let the assumptions of Theorem \ref{theorem:Cramer_sect_proof} be satisfied. Denote by $r$ the uniform bound on the increments of the geodesic random walk. Then for any $m \in \NN$ such that $m \geq \frac{2r}{\iota(\overline{B(x_0,r)})}$  and any closed $F \subset (T_{x_0}M)^m$ we have
\begin{align*}
&\limsup_{n\to\infty} \frac1n\log\PP\left(\left(v_{\lfloor m^{-1}n \rfloor}^{n,m,1},\ldots,v_{\lfloor m^{-1}n \rfloor}^{n,m,m}\right) \in F\right)
\\
&\leq 
-\inf_{(v_1,\ldots,v_m) \in F} \sup_{(\lambda_1,\ldots,\lambda_m) \in (T_{x_0}M)^m} \frac1m\sum_{i=1}^m \left\{\inp{\lambda_i}{mv_i} - \Lambda_{x_0}(\lambda_i) - m^{-2}C|\lambda_i|r^2\right\}.
\end{align*}
Here, $C$ is a constant depending on the curvature of the compact set $\overline{B(0,r)}$ and the bound $r$.
\end{proposition}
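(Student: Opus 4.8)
The plan is to run the classical Cram\'er upper-bound scheme for the $(T_{x_0}M)^m$-valued random variables $(v_{\lfloor m^{-1}n\rfloor}^{n,m,1},\dots,v_{\lfloor m^{-1}n\rfloor}^{n,m,m})$: first bound their joint moment generating function by treating the $m$ pieces of the walk one at a time, and then conclude by Chebyshev's inequality together with a finite subcover. Observe at the outset that $|\tilde v_k^{n,m,l}|=d\big((\tfrac1n*\Ss)_{n_{l-1}+k},(\tfrac1n*\Ss)_{n_{l-1}}\big)\le r$ and parallel transport is an isometry, so $|v_k^{n,m,l}|\le r$ for all $n,k,l$; hence the tuple always lies in the fixed compact set $\overline{B(0,r)}^m\subset (T_{x_0}M)^m$. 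In particular the sequence is trivially exponentially tight, and it suffices to prove the bound for compact $F$ (replace a general closed $F$ by $F\cap\overline{B(0,r)}^m$).

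\emph{Joint moment generating function.} Fix $\lambda_1,\dots,\lambda_m\in T_{x_0}M$. The $l$-th piece of the walk starts at $(\tfrac1n*\Ss)_{n_{l-1}}\in K:=\overline{B(x_0,r)}$, carries $\lfloor m^{-1}n\rfloor$ increments of norm $\le r$, and --- by \eqref{eq:injectivity_radius} and $m\ge 2r/\iota(\overline{B(x_0,r)})$ --- never leaves the injectivity ball of its starting point; these are exactly the hypotheses under which the proof of Proposition \ref{prop:upper_bound_mgf} runs (it uses only the pathwise estimate of Proposition \ref{prop:comparison_independent_sum}, the consistency identity of Proposition \ref{prop:logmgf}, and the independence of increments, the latter holding conditionally on $\Ff_{n_{l-1}}$). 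Transporting $\lambda_l$ to $T_{(\frac1n*\Ss)_{n_{l-1}}}M$ by the $\Ff_{n_{l-1}}$-measurable isometry $\tau_{x_0(\frac1n*\Ss)_{n_{l-1}}}$ and using $\Lambda_{(\frac1n*\Ss)_{n_{l-1}}}\big(\tau_{x_0(\frac1n*\Ss)_{n_{l-1}}}\lambda_l\big)=\Lambda_{x_0}(\lambda_l)$, that argument yields the \emph{deterministic} conditional bound
\[
\EE\left(e^{n\inp{\lambda_l}{v_{\lfloor m^{-1}n\rfloor}^{n,m,l}}}\,\middle|\,\Ff_{n_{l-1}}\right)\le e^{\lfloor m^{-1}n\rfloor n^{-1}|\lambda_l|C}\,e^{|\lambda_l|Cr^2\lfloor m^{-1}n\rfloor^3 n^{-2}}\,M_{x_0}(\lambda_l)^{\lfloor m^{-1}n\rfloor},
\]
with the constant $C$ of Proposition \ref{prop:upper_bound_mgf}, which depends only on $r$ and the geometry of $M$ on $K$ and is hence uniform in $l$. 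Since $v_{\lfloor m^{-1}n\rfloor}^{n,m,l}$ is $\Ff_{n_l}$-measurable for $l<m$, conditioning successively on $\Ff_{n_{m-1}},\Ff_{n_{m-2}},\dots,\Ff_{n_1}$ and peeling off one deterministic factor at each stage (cf.\ the computation in the proof of Proposition \ref{prop:LDP_Exp}) gives
\[
\EE\left(e^{n\sum_{l=1}^m\inp{\lambda_l}{v_{\lfloor m^{-1}n\rfloor}^{n,m,l}}}\right)\le\prod_{l=1}^m e^{\lfloor m^{-1}n\rfloor n^{-1}|\lambda_l|C}\,e^{|\lambda_l|Cr^2\lfloor m^{-1}n\rfloor^3 n^{-2}}\,M_{x_0}(\lambda_l)^{\lfloor m^{-1}n\rfloor}.
\]
(The last piece may carry up to $\lfloor m^{-1}n\rfloor+m$ increments; as in the remark preceding this subsection this perturbs the bound only by a factor disappearing after $\tfrac1n\log$.) Taking $\tfrac1n\log$ and letting $n\to\infty$, with $\lfloor m^{-1}n\rfloor/n\to m^{-1}$, $\lfloor m^{-1}n\rfloor^3/n^3\to m^{-3}$ and $\lfloor m^{-1}n\rfloor/n^2\to 0$, we obtain
\[
\limsup_{n\to\infty}\frac1n\log\EE\left(e^{n\sum_{l=1}^m\inp{\lambda_l}{v_{\lfloor m^{-1}n\rfloor}^{n,m,l}}}\right)\le\frac1m\sum_{l=1}^m\big(\Lambda_{x_0}(\lambda_l)+m^{-2}C|\lambda_l|r^2\big).
\]

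\emph{Conclusion.} Write $\Phi(v_1,\dots,v_m):=\sup_{(\lambda_1,\dots,\lambda_m)\in(T_{x_0}M)^m}\tfrac1m\sum_{i=1}^m\{\inp{\lambda_i}{mv_i}-\Lambda_{x_0}(\lambda_i)-m^{-2}C|\lambda_i|r^2\}$ for the inner expression of the claimed bound; note $\tfrac1m\sum_i\inp{\lambda_i}{mv_i}=\sum_i\inp{\lambda_i}{v_i}$. Let $F\subset(T_{x_0}M)^m$ be compact and fix $\delta>0$. For each $v=(v_1,\dots,v_m)\in F$ pick $(\lambda_1^v,\dots,\lambda_m^v)$ with $\tfrac1m\sum_i\{\inp{\lambda_i^v}{mv_i}-\Lambda_{x_0}(\lambda_i^v)-m^{-2}C|\lambda_i^v|r^2\}\ge\min\{\Phi(v),1/\delta\}-\delta$; since the left-hand side is affine, hence continuous, in $v$, there is an open $U_v\ni v$ on which it still exceeds $\min\{\Phi(v),1/\delta\}-2\delta$. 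Applying Chebyshev's inequality with the multipliers $(\lambda_l^v)_l$ and inserting the moment estimate above,
\[
\limsup_{n\to\infty}\frac1n\log\PP\left(\big(v_{\lfloor m^{-1}n\rfloor}^{n,m,l}\big)_{l=1}^m\in U_v\right)\le-\min\{\Phi(v),1/\delta\}+2\delta\le-\min\big\{\inf\nolimits_F\Phi,\tfrac1\delta\big\}+2\delta.
\]
Covering the compact set $F$ by finitely many such $U_v$, bounding $\limsup_n\tfrac1n\log\PP(\,\cdot\in F)$ by the maximum of the corresponding quantities, and letting $\delta\downarrow0$ yields
\[
\limsup_{n\to\infty}\frac1n\log\PP\left(\big(v_{\lfloor m^{-1}n\rfloor}^{n,m,l}\big)_{l=1}^m\in F\right)\le-\inf_{v\in F}\Phi(v),
\]
which is precisely the assertion.

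\emph{Main obstacle.} The crux is the moment-generating-function step. One must organize the splitting into pieces so that, first, the pathwise estimate of Proposition \ref{prop:comparison_independent_sum} and the moment bound of Proposition \ref{prop:upper_bound_mgf} hold for every piece with one and the same constant --- which works precisely because every piece is confined to the fixed compact set $K=\overline{B(x_0,r)}$ and, by the choice of $m$, never escapes the injectivity radius of its own starting point --- and, second, so that the contribution of each piece in the iterated conditioning is a \emph{deterministic} factor $M_{x_0}(\lambda_l)^{\lfloor m^{-1}n\rfloor}$ rather than a quantity depending on the (random) starting point of that piece; this last point is exactly where the consistency property (via Proposition \ref{prop:logmgf}) and the $\Ff_{n_{l-1}}$-measurability of the parallel transport $\tau_{x_0(\frac1n*\Ss)_{n_{l-1}}}$ are used. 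Once this estimate is in hand, the rest is the textbook compact-set Cram\'er argument, made painless here by the a priori uniform bound $|v_k^{n,m,l}|\le r$.
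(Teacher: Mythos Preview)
Your proof is correct and follows essentially the same route as the paper. The only organizational difference is that the paper re-derives the estimate of Proposition \ref{prop:upper_bound_mgf} inline for each piece by defining the transported sums $Y_i^n$ and invoking their independence (as in Proposition \ref{prop:LDP_Exp}) to factor the joint moment generating function, whereas you package this as an application of Proposition \ref{prop:upper_bound_mgf} conditionally on $\Ff_{n_{l-1}}$ followed by iterated conditioning; these are two phrasings of the same computation, and the paper's exponential-tightness remark and compact-set Chebyshev argument match yours exactly.
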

\begin{proof}
We first prove the upper bound for compact sets, so let $\Gamma \subset (T_{x_0}M)^m$ be compact. Following the proof of Cram\'er's theorem (see e.g. \cite{DZ98,Hol00}) for the vector space $(T_{x_0}M)^m$, we have
\begin{align*}
&\limsup_{n\to\infty} \frac1n\log\PP\left(\left(v_{\lfloor m^{-1}n \rfloor}^{n,m,1},\ldots,v_{\lfloor m^{-1}n \rfloor}^{n,m,m}\right) \in \Gamma\right)
\\
&\leq 
-\inf_{(v_1,\ldots,v_m) \in \Gamma} \sup_{(\lambda_1,\ldots,\lambda_m) \in (T_{x_0}M)^m} \left\{\sum_{i=1}^m \inp{\lambda_i}{v_i} - \limsup_{n\to\infty} \frac1n\log\EE\left(e^{n\sum_{i=1}^m \inp{\lambda_i}{v_{\lfloor m^{-1}n\rfloor}^{n,m,i}}}\right)\right\}.
\end{align*}

Recall that for $0 \leq i \leq m-1$ we write $n_i = i\lfloor m^{-1}n\rfloor$ and $n_m = n$. By Proposition \ref{prop:comparison_independent_sum} (which we may apply, because $m$ is chosen large enough) there exists a $C > 0$ such that for any $1 \leq i \leq m$ we have
\begin{align*}
\left|\tilde v_{\lfloor m^{-1}n \rfloor}^{n,m,i} - \frac1n\sum_{k=n_{i-1} + 1}^{n_i} \tau_{(\frac1n*\Ss)_{n_{i-1}}(\frac1n*\Ss)_k}^{-1}X_k^n\right| 
&\leq
C\frac {\lfloor m^{-1}n \rfloor}{n^2} + Cr^2\frac{\lfloor m^{-1}n \rfloor^3}{n^3}
\\
&\leq
C\frac{1}{nm} + Cr^2\frac1{m^3}.
\end{align*}
But then we also have that
\begin{equation}\label{eq:comparison_parallel_transport}
\left|v_{\lfloor m^{-1}n \rfloor}^{n,m,i} - \frac1n\tau_{x_0(\frac1n*\Ss)_{n_{i-1}}}^{-1}\sum_{k=n_{i-1} + 1}^{n_i} \tau_{(\frac1n*\Ss)_{n_{i-1}}(\frac1n*\Ss)_k}^{-1}X_k^n\right|  \leq C\frac{1}{nm} + Cr^2\frac1{m^3},
\end{equation}
because parallel transport is an isometry.

Now define
\[
Y_i^n = \tau_{x_0(\frac1n*\Ss)_{n_{i-1}}}^{-1}\sum_{k=n_{i-1} + 1}^{n_i} \tau_{(\frac1n*\Ss)_{n_{i-1}}(\frac1n*\Ss)_k}^{-1}X_k^n \in T_{x_0}M.
\]
Using \eqref{eq:comparison_parallel_transport}, it follows from the Cauchy-Schwarz inequality and the triangle inequality that
\[
\left|\sum_{i=1}^m \inp{\lambda_i}{v_{\lfloor m^{-1}n\rfloor}^{n,m,i}} - \frac1n\sum_{i=1}^m \inp{\lambda_i}{Y_i^n}\right| \leq C\left(\frac1{nm} + r^2\frac{1}{m^3}\right)\sum_{i=1}^m |\lambda_i|.
\]
Consequently, we find that
\[
\EE\left(e^{n\sum_{i=1}^m \inp{\lambda_i}{v_{\lfloor m^{-1}n\rfloor}^{n,m,i}}}\right) \leq e^{Cm^{-1}\sum_{i=1}^m|\lambda_i|}e^{Cr^2m^{-3}n\sum_{i=1}^m |\lambda_i|}\EE\left(e^{\sum_{i=1}^m \inp{\lambda_i}{Y_i^n}}\right).
\]

Now note that, like in the proof of Proposition \ref{prop:LDP_Exp}, we can show that for $i \neq j$ the random variables $Y_i^n$ and $Y_j^n$ are independent. Consequently, we have that
\[
\EE\left(e^{\sum_{i=1}^m \inp{\lambda_i}{Y_i^n}}\right) = \prod_{i=1}^m\EE\left(e^{\inp{\lambda_i}{Y_i^n}}\right).
\]
Moreover, again following the proof of Proposition \ref{prop:LDP_Exp}, one can show that
\[
\EE\left(e^{\inp{\lambda_i}{Y_i^n}}\right) = M_{x_0}(\lambda_i)^{\lfloor m^{-1}n\rfloor}.
\]

Combining everything, we find that
\begin{align*}
&\limsup_{n\to\infty} \frac1n\log\EE\left(e^{n\sum_{i=1}^m \inp{\lambda_i}{v_{\lfloor m^{-1}n\rfloor}^{n,m,i}}}\right)
\\
&\leq
\limsup_{n\to\infty} \left\{\frac{C}{mn}\sum_{i=1}^m|\lambda_i| + \frac{Cr^2}{m^3}\sum_{i=1}^m|\lambda_i| + \frac{\lfloor m^{-1}n\rfloor}n\sum_{i=1}^m \Lambda_{x_0}(\lambda_i)\right\} 
\\
&=
\frac{Cr^2}{m^3}\sum_{i=1}^m|\lambda_i| + \frac1m\sum_{i=1}^m \Lambda_{x_0}(\lambda_i).
\end{align*}

Putting everything together, we obtain
\begin{align*}
&\limsup_{n\to\infty} \frac1n\log\PP\left(\left(v_{\lfloor m^{-1}n \rfloor}^{n,m,1},\ldots,v_{\lfloor m^{-1}n \rfloor}^{n,m,m}\right) \in \Gamma\right)
\\
&\leq 
-\inf_{(v_1,\ldots,v_m) \in \Gamma} \sup_{(\lambda_1,\ldots,\lambda_m) \in (T_{x_0}M)^m} \sum_{i=1}^m\left\{ \inp{\lambda_i}{v_i} - m^{-1}\Lambda_{x_0}(\lambda_i) - m^{-3}Cr^2|\lambda_i|\right\}.
\end{align*}
This concludes the proof of the upper bound for compact sets.

To extend the upper bound to all closed sets, one should simply notice that 
\[
\left(v_{\lfloor m^{-1}n \rfloor}^{n,m,1},\ldots,v_{\lfloor m^{-1}n \rfloor}^{n,m,m}\right) \in \overline{B(0,r)}^m
\]
almost surely, where $r$ is the uniform bound of the increments. Since $M$ is complete, $\overline{B(0,r)}^m$ is compact, so that the sequence is exponentially tight. 
\end{proof}

It now remains to transfer the upper bound in Proposition \ref{prop:upper_bound_LDP_tangent}  for the process in $(T_{x_0}M)^m$ related to $\left\{\left(\frac1n*\Ss\right)_n\right\}_{n\geq0}$ to the upper bound of the large deviation principle for $\left\{\left(\frac1n*\Ss\right)_n\right\}_{n\geq0}$. With all preperations done, the only thing that remains to be shown, is that the upper bound has the desired form.

\begin{proposition}\label{prop:upper_bound_LDP}
Let the assumptions of Theorem \ref{theorem:Cramer_sect_proof} be satisfied. Then for any $F \subset M$ closed we have
\[
\limsup_{n\to\infty} \frac1n\log\PP\left(\left(\frac1n*\Ss\right)_n \in F\right) \leq -\inf_{x\in F} I_M(x),
\]
where 
\[
I_M(x) = \inf\{\Lambda_{x_0}^*(v)| v \in \Exp_{x_0}^{-1}x\}.
\]
\end{proposition}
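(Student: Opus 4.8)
The strategy is to transport the problem to the vector space $(T_{x_0}M)^m$ via the continuous map $\Psi_m$, apply the large-deviation upper bound already established there, and then send $m\to\infty$. Fix $m\in\NN$ with $m\ge 2r/\iota(\overline{B(x_0,r)})$ and let $F\subseteq M$ be closed. Since $\Psi_m$ is continuous and sends the tuple $\big(v^{n,m,1}_{\lfloor m^{-1}n\rfloor},\dots,v^{n,m,m}_{\lfloor m^{-1}n\rfloor}\big)$ to $\big(\tfrac1n*\Ss\big)_n$ (up to the $\Oo(1/n)$ discrepancy of the Remark, which is harmless because $n\to\infty$ precedes $m\to\infty$), the set $\tilde F:=\Psi_m^{-1}(F)$ is closed and $\PP\big(\big(\tfrac1n*\Ss\big)_n\in F\big)=\PP\big(\big(v^{n,m,1}_{\lfloor m^{-1}n\rfloor},\dots,v^{n,m,m}_{\lfloor m^{-1}n\rfloor}\big)\in\tilde F\big)$. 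Applying Proposition \ref{prop:upper_bound_LDP_tangent} to $\tilde F$ and using that the supremum over $(\lambda_1,\dots,\lambda_m)$ there decouples componentwise, one gets
\[
\limsup_{n\to\infty}\tfrac1n\log\PP\Big(\big(\tfrac1n*\Ss\big)_n\in F\Big)\ \le\ -\inf_{(v_1,\dots,v_m)\in\tilde F}\ \frac1m\sum_{i=1}^m\psi_m(mv_i),
\]
where $\psi_m(w):=\sup_{\lambda}\{\inp{\lambda}{w}-\Lambda_{x_0}(\lambda)-m^{-2}Cr^2|\lambda|\}$ is convex, nonnegative and lower semicontinuous.

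Next I would identify the infimum on the right, for each fixed $m$, with $\inf_{v\in\Exp_{x_0}^{-1}F}\psi_m(v)$. The inequality ``$\le$'' is immediate: for $v\in\Exp_{x_0}^{-1}F$ the diagonal tuple $(\tfrac1m v,\dots,\tfrac1m v)$ lies in $\tilde F$ (its image under $\Psi_m$ is the geodesic $t\mapsto\Exp_{x_0}(tv)$ evaluated at $t=1$) and $\frac1m\sum_i\psi_m\big(m\tfrac1m v\big)=\psi_m(v)$. For ``$\ge$'' one first observes that only tuples with $|v_i|$ bounded can contribute, since $\psi_m\ge 0$ and $\psi_m\equiv+\infty$ off a bounded set; for such tuples Jensen's inequality gives $\frac1m\sum_i\psi_m(mv_i)\ge\psi_m\big(\sum_iv_i\big)$, and the geometric estimates of Section \ref{section:geometry} (Proposition \ref{proposition:inverse_exp} and Corollary \ref{corollary:inverse_exp_parallel}, as packaged in Proposition \ref{prop:comparison_independent_sum}) control the discrepancy between $\sum_iv_i$ and $\Exp_{x_0}^{-1}\big(\Psi_m(v_1,\dots,v_m)\big)\in\Exp_{x_0}^{-1}F$, so that the diagonal is asymptotically optimal. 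Hence $\limsup_{n\to\infty}\tfrac1n\log\PP\big(\big(\tfrac1n*\Ss\big)_n\in F\big)\le-\inf_{v\in\Exp_{x_0}^{-1}F}\psi_m(v)$ for every admissible $m$.

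Finally, let $m\to\infty$. For each fixed $w$, $\psi_m(w)\uparrow\Lambda_{x_0}^*(w)$ as $m\to\infty$; since the increments are bounded by $r$ we have $\Lambda_{x_0}(\lambda)\le r|\lambda|$, hence $\{\psi_m\le c\}\subseteq\overline{B(0,r+1)}$ for all large $m$, and $\Lambda_{x_0}^*$ is a good rate function with $\{\Lambda_{x_0}^*<\infty\}\subseteq\overline{B(0,r)}$. This uniform coercivity, together with lower semicontinuity and monotonicity in $m$, permits interchanging the limit and the infimum: $\inf_{v\in\Exp_{x_0}^{-1}F}\psi_m(v)\to\inf_{v\in\Exp_{x_0}^{-1}F}\Lambda_{x_0}^*(v)$. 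As $\inf_{v\in\Exp_{x_0}^{-1}F}\Lambda_{x_0}^*(v)=\inf_{x\in F}\inf_{v\in\Exp_{x_0}^{-1}x}\Lambda_{x_0}^*(v)=\inf_{x\in F}I_M(x)$, this yields the claim. (If $\Exp_{x_0}^{-1}F=\emptyset$, or more generally $F\cap\overline{B(x_0,r)}=\emptyset$, then $\PP\big(\big(\tfrac1n*\Ss\big)_n\in F\big)=0$ while $\inf_{x\in F}I_M(x)=+\infty$, so the inequality is trivial.)

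The step I expect to be the main obstacle is the ``$\ge$'' part of the second paragraph: $\tilde F=\Psi_m^{-1}(F)$ is not convex, so showing that non-diagonal tuples cannot do better than the diagonal is not purely a convexity matter — it requires the curvature estimates that bound how far the geodesic polygon built by $\Psi_m$ strays from the straight ray $t\mapsto\Exp_{x_0}\big(t\sum_iv_i\big)$. The subsequent interchange of the limit $m\to\infty$ with the infimum over $v$ is routine, relying only on the boundedness of the increments.
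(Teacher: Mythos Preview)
Your overall architecture --- pull back via $\Psi_m$, apply Proposition \ref{prop:upper_bound_LDP_tangent}, then let $m\to\infty$ --- matches the paper exactly, and your handling of the ``$\le$'' direction (diagonal tuples) and of the limit $m\to\infty$ is essentially correct. The gap you yourself flag in the ``$\ge$'' direction is real, and your proposed fix does not close it.

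The Jensen step gives $\frac1m\sum_i\psi_m(mv_i)\ge\psi_m\big(\sum_i v_i\big)$, but $\sum_i v_i$ need not lie in $\Exp_{x_0}^{-1}F$; what does lie there is $w:=\Exp_{x_0}^{-1}\big(\Psi_m(v_1,\dots,v_m)\big)$. Running the computation behind Proposition \ref{prop:comparison_independent_sum} for a tuple with $|v_i|\le (r+o(1))/m$ yields
\[
w-\sum_i v_i \;=\; \sum_i\big(H_i-I\big)v_i \;+\; O\Big(\tfrac1m\sum_i|w_i|^2\Big)\;+\;O\Big(\tfrac1m\Big),
\]
where $H_i$ is the holonomy of the loop ``piecewise path $x_0\to x_i$, then geodesic back''. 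Both error sources are $O(1)$, not $o(1)$: the middle term is the same $Cr^2\ell^3/n^3$ with $\ell=n=m$ that forced the paper to split into pieces in the first place, and $|H_i-I|$ is controlled by the area enclosed, which is $O(r^2)$ for a generic bounded tuple. Since $\psi_m$ is neither Lipschitz nor rotation-invariant (the measures $\mu_x$ need not be spherically symmetric), an $O(1)$ discrepancy between $w$ and $\sum_i v_i$ cannot be absorbed. Proposition \ref{prop:comparison_independent_sum} does not help here: it compares $\dd\Exp^{-1}$ with parallel transport \emph{along the geodesic from $x_0$}, whereas $\Psi_m$ transports along the piecewise path, and it is precisely this mismatch that produces the holonomy term.

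The paper resolves the ``$\ge$'' direction by a completely different mechanism. It first weakens the bound via $|\lambda|\le|\lambda|^2+1$, so that the modified $\Lambda_m(\lambda)=\Lambda_{x_0}(\lambda)+m^{-2}Cr^2|\lambda|^2$ is differentiable and strictly convex with $\Lambda_m^*$ everywhere finite (Lemma \ref{lemma:convex_analysis}). This puts $\Lambda_m^*$ in the regime of \cite[Proposition 8.3]{KRV18}, which states that for a parallel-transport-invariant Lagrangian of this type the minimizers of $\gamma\mapsto\int_0^1\Lambda_m^*(\dot\gamma)\,\dd t$ among piecewise geodesics with fixed endpoints are true geodesics. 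Interpreting $\frac1m\sum_i\Lambda_m^*(mv_i)$ as this action along the $\Psi_m$-path immediately gives that the infimum over $\Psi_m^{-1}F$ is attained on diagonals, i.e.\ equals $\inf_{v\in\Exp_{x_0}^{-1}F}\Lambda_m^*(v)$. Note that this variational result genuinely needs the $|\lambda|^2$ replacement: with $|\lambda|$ the transform $\psi_m$ is neither differentiable nor finite everywhere, so even if you had tried to invoke \cite{KRV18} it would not apply to your $\psi_m$.
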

\begin{proof}
Let $F \subset M$ be closed and pick $m \in \NN$ such that $m \geq \frac{2r}{\iota(\overline{B(x_0,r)})}$, where $r$ denotes the uniform bound of the increments. Let $\Psi_m :(T_{x_0}M)^m \to M$ be the recursive application of the Riemannian exponential map defined just above Section \ref{section:upper_bound}. Because $\Psi_m$ is continuous, we have that $\Psi_m^{-1}F \subset (T_{x_0}M)^m$ is closed. Hence, by Proposition \ref{prop:upper_bound_LDP_tangent} we find that
\begin{align*}
&\limsup_{n\to\infty} \frac1n\log\PP\left(\left(\frac1n*\Ss\right)_n \in F\right)
\\
&\leq
\limsup_{n\to\infty} \frac1n\log\PP\left(\left(v_{\lfloor m^{-1}n \rfloor}^{n,m,1},\ldots,v_{\lfloor m^{-1}n \rfloor}^{n,m,m}\right) \in \Psi_m^{-1}F\right)
\\
&\leq 
-\inf_{(v_1,\ldots,v_m) \in \Psi_m^{-1}F} \sup_{(\lambda_1,\ldots,\lambda_m) \in (T_{x_0}M)^m} \frac1m\sum_{i=1}^m \left\{\inp{\lambda_i}{mv_i} - \Lambda_{x_0}(\lambda_i) - m^{-2}Cr^2|\lambda_i|\right\}.
\end{align*}
Now observe that for every $\lambda \in T_{x_0}M$ we have $|\lambda| \leq |\lambda|^2 + 1$. Plugging this into the above estimate, keeping in mind the minus sign in front, we find that
\begin{align*}
&\limsup_{n\to\infty} \frac1n\log\PP\left(\left(\frac1n*\Ss\right)_n \in F\right)
\\
&\leq 
\frac{Cr^2}{m^2} - \inf_{(v_1,\ldots,v_m) \in \Psi_m^{-1}F} \sup_{(\lambda_1,\ldots,\lambda_m) \in (T_{x_0}M)^m} \frac1m\sum_{i=1}^m \left\{\inp{\lambda_i}{mv_i} - \Lambda_{x_0}(\lambda_i) - m^{-2}Cr^2|\lambda_i|^2\right\}.
\end{align*}

We now focus on the infimum in the above expression. The necessity of replacing $|\lambda|$ with $|\lambda|^2$, and making the upper bound slightly worse, will become clear when we try to calculate this infimum further. 

 First, consider the map $\Lambda_m: T_{x_0}M \to \RR$ defined by
\[
\Lambda_m(\lambda) = \Lambda_{x_0}(\lambda) + \frac1{m^2}Cr^2|\lambda|^2.
\]
and denote by $\Lambda_m^*$ its Legendre transform. Then
\[
\sup_{(\lambda_1,\ldots,\lambda_m) \in (T_{x_0}M)^m} \frac1m\sum_{i=1}^m \left\{\inp{\lambda_i}{mv_i} - \Lambda_{x_0}(\lambda_i) - m^{-2}Cr^2|\lambda_i|\right\} = \frac1m\sum_{i=1}^m \Lambda_m^*(mv_i).
\]
The latter may be interpreted as 
\[
\int_0^1 \Lambda_m^*(\dot\gamma_m(t))\ud t,
\]
where $\gamma_m$ is piecewise geodesic on intervals of the form $[\frac{(i-1)}m,\frac im]$ with speed $m\tilde v_i$, where $\tilde v_i = \tau_{x_0\gamma_m(\frac{(i-1)}m)}v_i$. 

Now note that since $\Lambda_{x_0}$ is differentiable and convex, we find that $\Lambda_m$ is differentiable and strictly convex. Furthermore, we have for every $u \in T_{x_0}M$ that
\[
\Lambda_m^*(u) = \sup_{\lambda \in T_{x_0}M} \left\{\inp{\lambda}{u} - \Lambda_{x_0}(\lambda) - \frac1{m^2}Cr^2|\lambda|^2\right\} \leq \sup_{\lambda \in T_{x_0}M} \left\{\inp{\lambda}{u} - \frac1{m^2} Cr^2|\lambda|^2\right\} < \infty.
\]
Here we used that $\Lambda_{x_0}$ is non-negative, because the expectation of $\mu_{x_0}$ is 0. Consequently, we find that $\Lambda_m^*$ is everywhere finite. Note that this does not contradict the fact that the rate function might be infinite, since $\Lambda_m^*$ merely provides a lower bound of the rate function.  Because $\Lambda_m^*$ is everywhere finite, it follows from Lemma \ref{lemma:convex_analysis} that $\Lambda_m^*$ is strictly convex and differentiable. 

The above shows that we can apply \cite[Proposition 8.3]{KRV18}, giving us that minimizing trajectories for the functional
\[
\int_0^1 \Lambda_m^*(\dot\gamma(t))\ud t
\] 
are geodesics. Because for every $x \in F$ and every $v \in \Exp_{x_0}^{-1}x$ we have that $(\frac1mv,\ldots,\frac1mv) \in \Psi_m^{-1}F$, we find that
\begin{align*}
&-\inf_{(v_1,\ldots,v_m) \in \Psi_m^{-1}F} \sup_{(\lambda_1,\ldots,\lambda_m) \in (T_{x_0}M)^m} \frac1m\sum_{i=1}^m \left\{\inp{\lambda_i}{mv_i} - \Lambda_{x_0}(\lambda_i)\} - m^{-2}Cr^2|\lambda_i|^2\right\}
\\
&=
-\inf_{v\in \Exp_{x_0}^{-1}F} \sup_{\lambda \in T_{x_0}M} \left\{\inp{\lambda}{v} - \Lambda_{x_0}(\lambda) - m^{-2}Cr^2|\lambda|^2\right\}.
\end{align*}
Now note that 
\begin{align}
&\lim_{m\to\infty} \sup_{\lambda \in T_{x_0}M} \left\{\inp{\lambda}{v} - \Lambda_{x_0}(\lambda) - m^{-2}Cr^2|\lambda|^2\right\} \nonumber
\\
&=
\sup_{\lambda \in T_{x_0}M} \lim_{m\to\infty}\left\{\inp{\lambda}{v} - \Lambda_{x_0}(\lambda) - m^{-2}Cr^2|\lambda|^2\right\} \label{eq:lim_logmgf}
\\
&=
\sup_{\lambda \in T_{x_0}M} \left\{\inp{\lambda}{v} - \Lambda_{x_0}(\lambda)\right\}, \nonumber
\end{align}
because $\Lambda_m(\lambda) = \inp{\lambda}{v} - \Lambda_{x_0}(\lambda) - m^{-2}Cr^2|\lambda|^2$ is increasing in $m$ for every $\lambda \in T_{x_0}M$. Furthermore, we have
\[
\inp{\lambda}{v} - \Lambda_{x_0}(\lambda) - m^{-2}Cr^2|\lambda| \geq \inp{\lambda}{v} - r|\lambda| - m^{-2}Cr^2|\lambda|^2,
\]
because the support of $\mu_{x_0}$ is contained in $\overline{B(0,r)}$. Furthermore, one may compute that if $|v| > r$, then
\begin{equation}\label{eq:growth_rate}
\sup_{\lambda \in T_{x_0}M} \left\{ \inp{\lambda}{v} - r|\lambda| - m^{-2}Cr^2|\lambda|^2\right\} = \frac{m^2}{4Cr^2}(|v| - r)^2.
\end{equation}

Now write 
\[
\Exp_{x_0}^{-1}F = \left(\Exp_{x_0}^{-1}F \cap \overline{B(0,2r)}\right) \cup \left(\Exp_{x_0}^{-1}F \cap \overline{B(0,2r)}^C\right). 
\]
Note that by \eqref{eq:growth_rate}, we find that
\begin{align*}
&\lim_{m\to\infty} \inf_{v\in\Exp_{x_0}^{-1}F \cap \overline{B(0,2r)}^C} \sup_{\lambda \in T_{x_0}M} \left\{\inp{\lambda}{v} - \Lambda_{x_0}(\lambda) - m^{-2}Cr^2|\lambda|^2\right\}
\\
&\geq
\lim_{m\to\infty} \inf_{v\in\Exp_{x_0}^{-1}F \cap \overline{B(0,2r)}^C} \sup_{\lambda \in T_{x_0}M} \left\{\inp{\lambda}{v} - r|\lambda| - m^{-2}Cr^2|\lambda|^2\right\}
\\
&\geq
\lim_{m\to\infty} \frac{m^2}{4Cr^2}r^2
\\
&=
\infty,
\end{align*}
where we used in the one but last line that $|v| \geq 2r$. Also, because $|v| \geq 2r \geq r$, we have
\[
\sup_{\lambda \in T_{x_0}M} \left\{\inp{\lambda}{v} - \Lambda_{x_0}(\lambda)\right\} = \infty, 
\]
so that
\begin{align*}
&\lim_{m\to\infty} \inf_{v\in\Exp_{x_0}^{-1}F \cap \overline{B(0,2r)}^C} \sup_{\lambda \in T_{x_0}M} \left\{\inp{\lambda}{v} - \Lambda_{x_0}(\lambda) - m^{-2}Cr^2|\lambda|^2\right\}
\\
&=
 \inf_{v\in\Exp_{x_0}^{-1}F \cap \overline{B(0,2r)}^C} \sup_{\lambda \in T_{x_0}M} \left\{\inp{\lambda}{v} - \Lambda_{x_0}(\lambda)\right\}.
\end{align*}

For the other part, because $\Exp_{x_0}^{-1}F \cap \overline{B(0,2r)}$ is compact, it follows from \eqref{eq:lim_logmgf} that
\begin{align*}
&\lim_{m\to\infty} \inf_{v\in \Exp_{x_0}^{-1}F\cap \overline{B(0,2r)}} \sup_{\lambda \in T_{x_0}M} \left\{\inp{\lambda}{v} - \Lambda_{x_0}(\lambda) - m^{-2}Cr^2|\lambda|^2\right\}
\\
&= 
\inf_{v\in \Exp_{x_0}^{-1}F\cap \overline{B(0,2r)}} \sup_{\lambda \in T_{x_0}M} \left\{\inp{\lambda}{v} - \Lambda_{x_0}(\lambda)\right\}.
\end{align*}

Collecting everything, we find that
\begin{align*}
&\limsup_{n\to\infty} \frac1n\log\PP\left(\left(\frac1n*\Ss\right)_n \in F\right)
\\
&\leq
\lim_{m\to\infty} \left( \frac{Cr^2}{m^2} -  \inf_{v\in \Exp_{x_0}^{-1}F} \sup_{\lambda \in T_{x_0}M} \left\{\inp{\lambda}{v} - \Lambda_{x_0}(\lambda) - m^{-2}Cr^2|\lambda|^2\right\}\right)
\\
&=
-\lim_{m\to\infty} \inf_{v\in \Exp_{x_0}^{-1}F\cap \overline{B(0,2r)}} \sup_{\lambda \in T_{x_0}M} \left\{\inp{\lambda}{v} - \Lambda_{x_0}(\lambda) - m^{-2}Cr^2|\lambda|^2\right\}
\\
&=
-\inf_{v\in \Exp_{x_0}^{-1}F} \sup_{\lambda \in T_{x_0}M} \left\{\inp{\lambda}{v} - \Lambda_{x_0}(\lambda)\right\}
\\
&=
-\inf_{x\in F} I_M(x),
\end{align*}
which concludes the proof.
\end{proof}

\subsection{Lower bound of the large deviation principle for $\{(\frac1n*\Ss)_n\}_{n\geq0}$}\label{section:lower_bound}
In this section we prove the large deviation lower bound for $\{(\frac1n*\Ss)_n\}_{n\geq0}$. In order to do this, we need a refinement of Proposition \ref{prop:LDP_Exp}, which may be proven in a similar way.

\begin{proposition}\label{prop:LDP_Exp_m}
Let the assumptions of Theorem \ref{theorem:Cramer_sect_proof} be satisfied. Let $m \in \NN$ and set $Z_n^m = \frac1n\sum_{k=1}^{{\lfloor m^{-1}n\rfloor}} \tau_{x_0(\frac1n*\Ss)_{k-1}}^{-1}X_k^n$. Finally, define $\Lambda_{x_0}(\lambda) = \log\EE(e^{\inp{\lambda}{X_1}})$. Then $\{Z_n^m\}_{n\geq1}$ satisfies in $T_{x_0}M$ the large deviation principle with good rate function
\[
I_m(v) = \frac1m\Lambda_{x_0}^*(mv), 
\]
where $\Lambda_{x_0}^*(v) = \sup_{\lambda \in T_{x_0}M} \{\inp{\lambda}{v} - \Lambda_{x_0}(\lambda)\}$.
\end{proposition}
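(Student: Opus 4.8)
The plan is to deduce this from the Gärtner--Ellis theorem, re-using verbatim the i.i.d.\ structure of the pulled-back increments established in the proof of Proposition \ref{prop:LDP_Exp}. As there, for each fixed $n$ the vectors $Y_k^n := \tau_{x_0(\frac1n*\Ss)_{k-1}}^{-1}X_k^n$, $1 \le k \le n$, are mutually independent --- by Definition \ref{definition:independent_increments} together with the $\Ff_{k-1}$-measurability of $\tau_{x_0(\frac1n*\Ss)_{k-1}}^{-1}$, obtained by conditioning successively on $\Ff_{n-1},\Ff_{n-2},\dots$ exactly as in the computations in the proofs of Propositions \ref{prop:LDP_Exp} and \ref{prop:upper_bound_LDP_tangent} --- and each $Y_k^n$ has law $\mu_{x_0}$ by Proposition \ref{prop:logmgf} and the consistency property. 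Since $Z_n^m = \frac1n\sum_{k=1}^{\lfloor m^{-1}n\rfloor} Y_k^n$, this yields, for every $\lambda \in T_{x_0}M$,
\[
\EE\bigl(e^{n\inp{\lambda}{Z_n^m}}\bigr) = \prod_{k=1}^{\lfloor m^{-1}n\rfloor}\EE\bigl(e^{\inp{\lambda}{Y_k^n}}\bigr) = M_{x_0}(\lambda)^{\lfloor m^{-1}n\rfloor},
\]
with $M_{x_0}(\lambda) = e^{\Lambda_{x_0}(\lambda)}$ finite everywhere because the increments are bounded.

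Dividing by $n$ and using $\lfloor m^{-1}n\rfloor/n \to 1/m$, I obtain the limiting log-moment generating function $\Lambda^{(m)}(\lambda) = \frac1m\Lambda_{x_0}(\lambda)$ on all of $T_{x_0}M$. This function is finite everywhere (boundedness of the increments) and real-analytic there --- being a positive multiple of the log-moment generating function of the compactly supported measure $\mu_{x_0}$ --- hence lower semicontinuous, with $0$ in the interior of its (full) effective domain, and essentially smooth (steepness being vacuous, as the effective domain has empty boundary). The Gärtner--Ellis theorem (see e.g.\ \cite{DZ98}) therefore applies and shows that $\{Z_n^m\}_{n\ge1}$ satisfies the LDP in $T_{x_0}M$ with good rate function $(\Lambda^{(m)})^*$. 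Finally I compute, for $v \in T_{x_0}M$,
\[
(\Lambda^{(m)})^*(v) = \sup_{\lambda \in T_{x_0}M}\Bigl\{\inp{\lambda}{v} - \tfrac1m\Lambda_{x_0}(\lambda)\Bigr\} = \frac1m\sup_{\lambda \in T_{x_0}M}\bigl\{\inp{\lambda}{mv} - \Lambda_{x_0}(\lambda)\bigr\} = \frac1m\Lambda_{x_0}^*(mv) = I_m(v),
\]
as claimed.

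Alternatively, and closer in spirit to Proposition \ref{prop:LDP_Exp}, one may apply Cramér's theorem for vector spaces to the i.i.d.\ sequence $\{Y_k^n\}_k$ to get the LDP for $\frac{1}{\lfloor m^{-1}n\rfloor}\sum_{k=1}^{\lfloor m^{-1}n\rfloor}Y_k^n$ at speed $\lfloor m^{-1}n\rfloor$ with rate $\Lambda_{x_0}^*$, and then transfer it to $Z_n^m = \frac{\lfloor m^{-1}n\rfloor}{n}\cdot\frac{1}{\lfloor m^{-1}n\rfloor}\sum_{k=1}^{\lfloor m^{-1}n\rfloor}Y_k^n$ using the continuity and finiteness of $\Lambda_{x_0}^*$ on $\overline{B(0,r)}$ (where all these averages live, the increments being bounded by $r$) to absorb both the change of speed and the rescaling $x \mapsto \frac{\lfloor m^{-1}n\rfloor}{n}x$. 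There is no genuine obstacle here: the only points needing a little care are the harmless bookkeeping between $\lfloor m^{-1}n\rfloor$ and $n/m$ --- harmless because in the applications $n$ is always sent to infinity before $m$ --- and, in the second route, the simultaneous treatment of the change of speed and the linear rescaling appearing in the exponent.
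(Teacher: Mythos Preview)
Your proof is correct and follows the route the paper has in mind: the paper only says Proposition~\ref{prop:LDP_Exp_m} ``may be proven in a similar way'' to Proposition~\ref{prop:LDP_Exp}, i.e.\ by exploiting that the pulled-back increments $Y_k^n$ are i.i.d.\ $\mu_{x_0}$ and then invoking a Cram\'er-type result. Your choice to route through G\"artner--Ellis rather than Cram\'er directly is a harmless variant that handles the $\lfloor m^{-1}n\rfloor/n$ prefactor cleanly without any extra rescaling bookkeeping; the alternative Cram\'er-based argument you sketch at the end is closer to the letter of the paper's hint and is equally valid.
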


We are now able to prove the large deviations lower bound for $\{(\frac1n*\Ss)_n\}_{n\geq0}$.

\begin{proposition}\label{prop:lower_bound_LDP}
Let the assumptions of Theorem \ref{theorem:Cramer_sect_proof} be satisfied. 
Then for any $G \subset M$ open, 
\[
\liminf_{n\to\infty} \frac1n\log \PP\left(\left(\frac1n*\Ss\right)_n \in G\right) \geq -\inf_{x \in G} I_M(x),
\]
where $I_M$ is as in \eqref{eq:rate_cramer_sect_proof}.
\end{proposition}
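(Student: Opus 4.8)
Since $I_M(x)=\inf\{\Lambda_{x_0}^*(v)\mid v\in\Exp_{x_0}^{-1}x\}$ and $G$ is open, it suffices to fix $x\in G$, a vector $v\in\Exp_{x_0}^{-1}x$ with $\Lambda_{x_0}^*(v)<\infty$, and prove $\liminf_{n\to\infty}\tfrac1n\log\PP\big((\tfrac1n*\Ss)_n\in G\big)\ge-\Lambda_{x_0}^*(v)$; the statement then follows by taking the infimum over such $v$ and over $x\in G$. Because $\mu_{x_0}$ is supported in $\overline{B(0,r)}$ we have $\Lambda_{x_0}(\lambda)\le r|\lambda|$, so $\Lambda_{x_0}^*(v)<\infty$ forces $|v|\le r$; hence the geodesic $\gamma_v(t)=\Exp_{x_0}(tv)$, $t\in[0,1]$, lies in the compact set $K=\overline{B(x_0,r)}$, which already contains the entire random walk. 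Fix $\epsilon>0$ with $B(x,\epsilon)\subset G$.

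\textbf{Decomposition and the geometric implication.} Fix $m\in\NN$ with $m\ge\tfrac{2r}{\iota(K)}$ (a further lower bound on $m$ is imposed below). Split the walk into $m$ blocks as in Section \ref{section:Cramer_bounded}, getting $\tilde v^{\,n,m,i}=\Exp_{(\frac1n*\Ss)_{n_{i-1}}}^{-1}\big((\tfrac1n*\Ss)_{n_i}\big)$, and put $y_i=\gamma_v(i/m)$, so $y_0=x_0$. Following Section \ref{section:sketch_lower_bound}, define $v^{\,n,m,i}\in T_{x_0}M$ by transporting $\tilde v^{\,n,m,i}$ first along a minimal geodesic from $(\tfrac1n*\Ss)_{n_{i-1}}$ to the anchor $y_{i-1}$ and then along $\gamma_v$ back to $x_0$; these are $\Ff_{n_{i-1}}$-measurable. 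Feeding $(v^{\,n,m,1},\dots,v^{\,n,m,m})$ into the $v$-dependent variant $\Phi_m^v$ of the reconstruction map $\Psi_m$ --- which at step $i$ transports the $i$-th component along $\gamma_v$ to $y_{i-1}$, then along a minimal geodesic to the current point, before applying $\Exp$ --- reproduces $(\tfrac1n*\Ss)_n$ up to an $\Oo(1/n)$ error, whereas $\Phi_m^v(\tfrac1m v,\dots,\tfrac1m v)=x$ (the reconstructed path is exactly $\gamma_v$, since the velocity of a geodesic is parallel along it). The crucial point is the quantitative stability of $\Phi_m^v$ at $(\tfrac1m v,\dots,\tfrac1m v)$: by Proposition \ref{proposition:distance_geodesic_point}, applied at each step to compare the geodesic segment produced by $\Phi_m^v$ from the perturbed inputs with the corresponding segment $s\mapsto\gamma_v(\tfrac{i-1}m+s)$ of $\gamma_v$ --- two geodesic segments in $K$ starting at nearby points with nearby initial velocities, the anchoring to $\gamma_v$ being exactly what keeps the first-order and curvature contributions controllable --- and iterating over the $m$ blocks, one obtains a constant $c\in(0,1)$ (depending only on $K$, $r$ and $v$) such that, for all large $n$,
\[
v^{\,n,m,i}\in B\big(\tfrac1m v,\,c\epsilon^2\big)\ \ \text{for all }i\ \ \Longrightarrow\ \ \big(\tfrac1n*\Ss\big)_n\in B(x,\epsilon)\subset G .
\]
In particular $\PP\big((\tfrac1n*\Ss)_n\in G\big)\ge\PP\big(v^{\,n,m,i}\in B(\tfrac1m v,c\epsilon^2)\ \forall i\big)$.

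\textbf{Transfer to i.i.d.\ sums and conclusion.} As in Proposition \ref{prop:comparison_independent_sum}, applied to each block with base point $(\tfrac1n*\Ss)_{n_{i-1}}$ and then composed with the common isometric transport from $T_{(\frac1n*\Ss)_{n_{i-1}}}M$ into $T_{x_0}M$, one has $\big|v^{\,n,m,i}-Y_i^n\big|\le\tfrac{C}{nm}+\tfrac{Cr^2}{m^3}$, where $Y_i^n\in T_{x_0}M$ is the correspondingly transported sum of the (pulled-back) increments $X_k^n$, $n_{i-1}<k\le n_i$. Exactly as in Proposition \ref{prop:LDP_Exp} --- conditioning successively on $\Ff_{k-1}$ and using independence of the increments, the consistency property and the isometry of parallel transport --- the $Y_1^n,\dots,Y_m^n$ are mutually independent, and each of the sequences $\{Y_i^n\}_n$ satisfies, by Cram\'er's theorem in $T_{x_0}M$ (Proposition \ref{prop:LDP_Exp_m}), the LDP with good rate function $I_m(u)=\tfrac1m\Lambda_{x_0}^*(mu)$. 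Enlarge $m$ so that $\tfrac{Cr^2}{m^3}<\tfrac{c\epsilon^2}{4}$; then for all large $n$ the event $\{Y_i^n\in B(\tfrac1m v,\tfrac{c\epsilon^2}{2})\ \forall i\}$ lies inside $\{v^{\,n,m,i}\in B(\tfrac1m v,c\epsilon^2)\ \forall i\}$, and by independence across blocks and the LDP lower bound in each block (with $\tfrac1m v$ an interior point of the ball),
\[
\liminf_{n\to\infty}\tfrac1n\log\PP\big(v^{\,n,m,i}\in B(\tfrac1m v,c\epsilon^2)\ \forall i\big)\ \ge\ \sum_{i=1}^m\big(-I_m(\tfrac1m v)\big)\ =\ -\,\Lambda_{x_0}^*(v),
\]
using superadditivity of $\liminf$ for the fixed finite sum over $i$. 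Together with the implication of the previous paragraph this yields $\liminf_{n\to\infty}\tfrac1n\log\PP\big((\tfrac1n*\Ss)_n\in G\big)\ge-\Lambda_{x_0}^*(v)$, which completes the proof.

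\textbf{Main obstacle.} The delicate step is the geometric implication of the second paragraph: one must show that $c\epsilon^2$-closeness of every tangent-space component propagates, through the $m$-fold recursion defining $\Phi_m^v$ and uniformly in $n$, to $\epsilon$-closeness of the $M$-valued endpoint, with the accumulated first-order and curvature errors over the $m$ geodesic segments staying negligible and the constant $c$ not degenerating as $m$ grows. This is precisely what the anchored choice of parallel transport (first to $y_{i-1}\in\gamma_v$, then along $\gamma_v$) together with the spreading estimate Proposition \ref{proposition:distance_geodesic_point} is designed to deliver, but organising the iteration --- and keeping the limits in the order ``$n\to\infty$ first, $m$ fixed'' --- so that the $\tfrac1{nm}$ and $\tfrac1{m^3}$ approximation errors and the quadratic radius loss do not swallow the exponential rate is the technical heart of the argument.
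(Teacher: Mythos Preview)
Your proposal is correct and follows essentially the same route as the paper: reduce to a fixed $x\in G$ and $v\in\Exp_{x_0}^{-1}x$, split the walk into $m$ blocks, pull each block back to $T_{x_0}M$ via the $\gamma_v$-anchored parallel transport, establish the $c\epsilon^2\Rightarrow\epsilon$ implication by iterating Proposition~\ref{proposition:distance_geodesic_point}, replace the block vectors by the transported i.i.d.\ sums $Y_i^n$ using Proposition~\ref{prop:comparison_independent_sum}, and conclude via independence and Proposition~\ref{prop:LDP_Exp_m}. The paper carries out the induction behind the geometric implication explicitly (obtaining $d(x_i,y_i)^2\le\tfrac{i-1}{2m}\epsilon^2+\tfrac{i}{m^2}C$ with the constant $\tfrac{1}{8\eta}$, $\eta=|v|+1$, in place of your abstract $c$), but your identification of this step as the technical heart and of the required tools is exactly right.
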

\begin{proof}
It suffices to show that 
\[
\liminf_{n\to\infty} \frac1n\log\PP\left(\left(\frac1n*\Ss\right)_n \in G\right) \geq -I_M(x)
\]
for every $x \in G$.

So fix $x \in G$ and pick $v \in \Exp_{x_0}^{-1}x$. Because $G$ is open, there exists an $\epsilon > 0$ such that $B(x,\epsilon) \subset G$. Let $m \in \NN$ such that $m \geq \frac{2r}{\iota(\overline{B(x_0,r)})}$, where $r$ is the uniform bound on the increments of the geodesic random walk. 

We again need to identify the geodesic random walk with a tuple in $(T_{x_0}M)^m$. However, this time the parallel transport back to $T_{x_0}M$ is carried out by first transporting to a well-chosen point on the geodesic $\gamma_v(t) = \Exp_{x_0}(tv)$ and then to $x_0$ along this geodesic. 

More precisely, we define a map $\Psi_{m,x,v}:(T_{x_0}M)^m \to M$ that allows us to identify the random variable $(\frac1n*\Ss)_n \in M$ with a vector of random variables in $(T_{x_0}M)^m$. To this end, define for $0 \leq i \leq m$ the points $y_i = \Exp_{x_0}(\frac im v)$. For $1 \leq i \leq m$ we define $\tau_{x_0y_i}$ as parallel transport along the geodesic $\Exp_{x_0}(tv)$. Furthermore, for every $z \in M$ and every $0 \leq i \leq m-1$, we choose a geodesic $\gamma_{y_ix}$ of minimum length and denote by $\tau_{y_ix}$ parallel transport along this geodesic.  We now define $\Psi_{m,x,v}(v_1,\ldots,v_m)$ as follows. Define $x_1 = \Exp_{x_0}(\frac1mv_1)$ and if $x_i$ is defined, we set $\tilde v_{i+1} = \tau_{y_ix_i}\tau_{x_0y_i}v_i$ and define $x_{i+1} = \Exp_{x_i}(\frac1m\tilde v_{i+1})$. Finally, we set $\Psi_{m,x,v}(v_1,\ldots,v_m) = x_m$. \\

Now note that by the triangle inequality, we have
\[
d(x_i,x_0) \leq \frac1m\sum_{j=1}^i |v_j| \leq \frac1m\sum_{j=1}^m |v_j|
\]
for any $1 \leq i \leq m$. Consequently, if $(v_1,\ldots,v_m) \in B(v,1)^m$ we have $|v_j| \leq |v| + 1$, so that  
\[
d(x_i,x_0) \leq |v| + 1
\]
for any $1 \leq i \leq m$. Because also $d(x_0,y_i) \leq \frac im|v| \leq |v|$, we find that $x_i,y_i \in \overline{B(x_0,|v| + 1)}$ for all $1 \leq i \leq m$. 

Writing $\eta = |v| + 1$, we will show that there exists a constant $m_0 \in \NN$ such that for all $m \geq m_0$ we have
\begin{equation}\label{eq:implication_small_vectors}
(v_1,\ldots,v_m) \in B(v,\epsilon^2/(8\eta))^m \Rightarrow \Psi_{m,x,v}(v_1,\ldots,v_m) \in B(x,\epsilon),
\end{equation}
whenever $\epsilon > 0$ is small enough.

To this end, let $K \subset M$ be a compact set, such that all geodesics of minimal length between points $x,y \in \overline{B(x_0,\eta)}$ are contained in $K$. Because $K$ is compact, its injectivity radius $\iota(K)$ is strictly positive. 

Fix $0 < \delta < \iota(K)$. We first show that for $\epsilon$ small enough and $m$ large enough we have
\begin{equation}\label{eq:recusion_estimate}
d(x_i,y_i)^2 \leq \frac{i-1}{2m}\epsilon^2 + \frac i{m^2}C
\end{equation}
for $1 \leq i \leq m$. Here, $C>0$ is some constant only depending on $K$ and $\delta$. We proceed by induction.

First consider the case $i = 1$. By taking $m$ large enough, we can apply Proposition \ref{proposition:distance_geodesic_point} to obtain a constant $C > 0$ (depending only on $K$ and $\delta$) such that
\[
d(x_1,y_1)^2 = d\left(\Exp_{x_0}\left(\frac1mv_1\right),\Exp_{x_0}\left(\frac1mv\right)\right) \leq \frac1{m^2}C.
\]

Now suppose that $d(x_i,y_i)^2 \leq \frac{i-1}{2m}\epsilon^2  + \frac i{m^2}C$. Then in particular we have
\[
d(x_i,y_i)^2 \leq \frac{\epsilon^2}2 + \frac1m C,
\]  
which can be made smaller than $\frac\delta2$ by taking $\epsilon$ sufficiently small and $m$ sufficiently large. In that case, we may again apply Proposition \ref{proposition:distance_geodesic_point}, so that for the same constant $C > 0$ as above, we have
\begin{align*} 
d(x_{i+1},y_{i+1})^2
&=
d\left(\Exp_{x_i}\left(\frac1m\tau_{y_ix_i}\tau_{x_0y_i}v_{i+1}\right),\Exp_{y_i}\left(\frac1m\tau_{x_0y_i}v\right)\right)
\\
&\leq 
d(x_i,y_i)^2 + 2\frac1m\inp{\tau_{x_0y_i}v_{i+1} - \tau_{x_0y_i}v}{\Exp_{y_i}^{-1}x_i} + \frac1{m^2}C
\\
&\leq
\frac{i-1}{2m}\epsilon^2  + \frac i{m^2}C + 2\frac1m|\tau_{x_0y_i}v_{i+1} - \tau_{x_0y_i}v||\Exp_{y_i}^{-1}x_i| + \frac1{m^2}C
\\
&=
\frac{i-1}{2m}\epsilon^2  + \frac {i+1}{m^2}C + \frac2m|v_i - v|d(x_i,y_i). 
\end{align*}
Now, observe that $d(x_i,y_i) \leq 2\eta$ since $x_i,y_i \in \overline{B(x_0,\eta)}$. Using this, together with the induction hypothesis and the fact that $|v_i - v| \leq \frac{\epsilon^2}{8\eta}$, we find
\[
d(x_i,y_i)^2 
\leq
\frac{i-1}{2m}\epsilon^2  + \frac {i+1}{m^2}C + \frac1{2m}\epsilon^2  = \frac{2i}{2m}\epsilon^2 + \frac {i+1}{m^2}C,
\]
as desired. 

Now taking $i = m$ in \eqref{eq:recusion_estimate}, we obtain
\[
d(x_m,y_m)^2 \leq \frac{\epsilon^2}2 +  C\frac1m,
\] 
whenever $(v_1,\ldots,v_m) \in B(v,\epsilon^2/(8\eta))^m$. Consequently, if we take $m_0 > \frac{2C}{\epsilon^2}$, we obtain for $m > m_0$ that
\[
d\left(\Psi_{m,x,v}(v_1,\ldots,v_m),x\right)^2 = d(x_m,y_m)^2 < \frac{\epsilon^2}2 + \frac{\epsilon^2}2 = \epsilon^2
\]
as desired.\\

Fixing $m_0$ and $C$ as above, let $m \geq m_0$ be large enough so that we can define
\[
\tilde v_k^{n,m,l} \in \Exp_{(\frac1n*\Ss)_{n_l}}^{-1}\left(\left(\frac1n*\Ss\right)_{n_l+k}\right) \subset T_{(\frac1n*\Ss)_{n_l}}M
\]
like in \eqref{eq:pullback_vectors}. Different from before, we now define the vectors
\begin{equation}\label{eq:pullback_vectors_tangent}
v_k^{n,m,l} = \tau_{x_0y_{n_l}}^{-1}\tau_{y_{n_l}(\frac1n*\Ss)_{n_l}}^{-1}\tilde v_k^{n,m,l} \in T_{x_0}M,
\end{equation}
using the parallel transport procedure used in the definition of the map $\Psi_{m,x,v}$.

Consequently, by construction we obtain
\[
\Psi_{m,x,v}\left(v_{\lfloor m^{-1}n \rfloor}^{n,m,1},\ldots,v_{\lfloor m^{-1}n \rfloor}^{n,m,m}\right) = \left(\frac1n*\Ss\right)_n.
\]
Using this, together with the implication in \eqref{eq:implication_small_vectors}, we find 
\begin{align*}
\PP\left(\left(\frac1n*\Ss\right)_n \in G\right)
&\geq
\PP\left(\left(\frac1n*\Ss\right)_n \in B(x,\epsilon)\right)
\\
&\geq
\PP\left(\left(v_{\lfloor m^{-1}n \rfloor}^{n,m,1},\ldots,v_{\lfloor m^{-1}n \rfloor}^{n,m,m}\right) \in B(v,\epsilon^2/(8\eta))^m\right)
\end{align*}

Now define for $1 \leq i \leq m$ the random variables
\[
Y_i^n = \tau_{x_0y_{n_{i-1}}}^{-1}\tau_{y_{n_{i-1}}(\frac1n*\Ss)_{n_{i-1}}}^{-1}\sum_{k=n_{i-1} + 1}^{n_i} \tau_{(\frac1n*\Ss)_{n_{i-1}}(\frac1n*\Ss)_{k-1}}^{-1}X_k^n \in T_{x_0}M,
\]
where the parallel transport $\tau_{(\frac1n*\Ss)_{n_{i-1}}(\frac1n*\Ss)_{k-1}}^{-1}$ is carried out along the trajectory of the geodesic random walk. The sum is then transported from $T_{(\frac1n*\Ss)_{n_{i-1}}}M$ to $T_{x_0}M$ as in the definition of $v_k^{n,m,l}$ as in \eqref{eq:pullback_vectors_tangent}.

In the same way as we obtained \eqref{eq:comparison_parallel_transport} in the proof of Proposition \ref{prop:upper_bound_LDP_tangent}, we find that there exists a constant $\tilde C > 0$ such that
\[
\left|v_{\lfloor m^{-1}n \rfloor}^{n,m,1} - Y_i^n\right| \leq \tilde C\frac1{nm} + \tilde C r^2\frac{1}{m^3}.
\]
Consequently, we may take $m$ large enough such that almost surely we have
\[
\left|v_{\lfloor m^{-1}n \rfloor}^{n,m,1} - Y_i^n\right| < \frac{\epsilon^2}{16\eta}.
\]
But then we find that if $Y_i^n \in B(v,\epsilon^2/(16\eta))$, then $v_{\lfloor m^{-1}n \rfloor}^{n,m,1} \in B(v,\epsilon^2/(8\eta))$. This implies that
\begin{align*}
&\PP\left(\left(v_{\lfloor m^{-1}n \rfloor}^{n,m,1},\ldots,v_{\lfloor m^{-1}n \rfloor}^{n,m,m}\right) \in B(v,\epsilon^2/(8\eta))^m\right)
\\
&\geq 
\PP\left(\left(Y_1^n,\ldots,Y_m^n\right) \in B(v,\epsilon^2/(16\eta))^m\right).
\end{align*}

Now note that, like in the proof of Proposition \ref{prop:LDP_Exp}, we can show that the random variables $Y_i^n$ and $Y_j^n$ are independent and identically distributed for $i \neq j$, so that
\begin{align*}
\PP\left(\left(Y_1^n,\ldots,Y_m^n\right) \in B(v,\epsilon^2/(16\eta))^m\right) 
&= 
\prod_{i=1}^m \PP\left(Y_i^n \in B(v,\epsilon^2/(16\eta))\right)
\\
&=
\PP\left(Y_1^n \in B(v,\epsilon^2/(16\eta))\right)^m.
\end{align*}
Furthermore, by Proposition \ref{prop:LDP_Exp_m} we have
\[
\liminf_{n\to \infty} \frac1n\log\PP\left(Y_1^n \in B(v,\epsilon^2/(16\eta))\right) \geq -\frac1m\Lambda_{x_0}^*(v).
\]

Combining everything, we find that
\begin{align*}
\liminf_{n\to\infty}\frac1n\log\PP\left(\left(\frac1n*\Ss\right)_n \in G\right)
&\geq
m\liminf_{n\to\infty} \frac1n\log\PP\left(Y_1^n \in B(v,\epsilon^2/(16\eta))\right)
\\
&\geq
-\Lambda_{x_0}^*(v).
\end{align*}
Since this holds for all $v \in \Exp_{x_0}^{-1}x$, we find that
\[
 \liminf_{n\to\infty} \frac1n\log \PP\left(\left(\frac1n*\Ss\right)_n \in G\right) \geq -\inf_{v\in \Exp_{x_0}^{-1}x} \Lambda_{x_0}^*(v) = -I_M(x),
\]
which concludes the proof.
\end{proof}


\appendix

\section{Some convex analysis}

In this appendix we collect a result from convex analysis. Although this is probably well-known, we provide a proof for the reader's convenience.

\begin{lemma}\label{lemma:convex_analysis}
Let $V$ be a vector space, and let $F:V \to \RR$ be strictly convex and differentiable. Then its Legendre transform $F^*$ is strictly convex and differentiable on the interior of its domain $\Dd_{F^*}^{\circ}$.
\end{lemma}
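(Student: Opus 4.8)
The plan is to derive both assertions from the standard duality between a convex function and its Legendre conjugate, invoking the two hypotheses on $F$ separately: strict convexity of $F$ will give differentiability of $F^*$, while differentiability of $F$ will give strict convexity of $F^*$. Since $V$ is finite-dimensional (a tangent space, in the applications), $F$ is finite and convex on all of $V$, hence continuous, hence lower semicontinuous, so the Fenchel--Moreau theorem gives $F^{**}=F$. Combined with the Fenchel equality this yields, for all $\lambda,u\in V$,
\[
\lambda\in\partial F^*(u)\ \Longleftrightarrow\ F(\lambda)+F^*(u)=\inp{\lambda}{u}\ \Longleftrightarrow\ u\in\partial F(\lambda),
\]
and since $F$ is differentiable everywhere we have $\partial F(\lambda)=\{\nabla F(\lambda)\}$, so that for every $u$
\[
\partial F^*(u)=\{\lambda\in V:\ \nabla F(\lambda)=u\}.
\]

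First I would check that $\nabla F$ is injective. If $\nabla F(\lambda_0)=\nabla F(\lambda_1)=:u$ with $\lambda_0\neq\lambda_1$, then $g(t):=F\big((1-t)\lambda_0+t\lambda_1\big)$ is strictly convex on $[0,1]$ (restriction of a strictly convex function to a non-degenerate segment), hence $g'$ is strictly increasing; but $g'(0)=\inp{u}{\lambda_1-\lambda_0}=g'(1)$, a contradiction. Therefore the set in the last display has at most one element. Now fix $u\in\Dd_{F^*}^{\circ}$; the subdifferential of the convex function $F^*$ is non-empty at interior points of its domain, so $\partial F^*(u)$ is exactly a singleton, which is equivalent to $F^*$ being differentiable at $u$. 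This proves differentiability of $F^*$ on $\Dd_{F^*}^{\circ}$.

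Next I would prove strict convexity of $F^*$ on $\Dd_{F^*}^{\circ}$. I would first observe, from the identification above, that no two distinct points $u_0\neq u_1$ of $\Dd_{F^*}^{\circ}$ can share a common subgradient $\lambda$, since that would force $u_0=\nabla F(\lambda)=u_1$. It then remains to deduce strict convexity from this fact: if $F^*$ failed to be strictly convex on $\Dd_{F^*}^{\circ}$, it would be affine on some non-degenerate segment $[u_0,u_1]\subset\Dd_{F^*}^{\circ}$ (a convex function attaining a chord value at an interior point of a segment is affine along it); choosing $\lambda\in\partial F^*\big(\tfrac{u_0+u_1}{2}\big)$, which is non-empty because the midpoint again lies in $\Dd_{F^*}^{\circ}$, one checks by evaluating the subgradient inequality at $u_0$ and at $u_1$ that $\lambda$ is a subgradient of $F^*$ at both endpoints, contradicting the previous observation.

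The argument contains no genuinely hard step; the main point is to get the convex-analytic bookkeeping and its hypotheses right — the identity $F=F^{**}$ (which is where continuity of $F$, automatic in finite dimensions, is used), the non-emptiness of the subdifferential of a convex function at interior points of its domain, and the equivalence between differentiability of a convex function and single-valuedness of its subdifferential. The only real care needed is the degenerate case $\Dd_{F^*}^{\circ}=\emptyset$, in which both assertions hold vacuously.
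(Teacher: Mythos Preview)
Your proof is correct and is essentially the same argument as the paper's, just packaged in the language of subdifferentials. The paper cites Rockafellar's Theorem 26.3 for differentiability of $F^*$, whereas you prove it directly via injectivity of $\nabla F$ and single-valuedness of $\partial F^*$; for strict convexity both arguments are identical in substance --- the paper's ``maximizer $\lambda_t^*$ at the midpoint is also a maximizer at the endpoints, hence $\nabla F(\lambda_t^*)=v=w$'' is exactly your ``a subgradient at the midpoint is a subgradient at both endpoints, forcing $u_0=\nabla F(\lambda)=u_1$''. Your version has the minor advantage of being self-contained and of cleanly separating which hypothesis on $F$ yields which conclusion on $F^*$.
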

\begin{proof}
The differentiability of $F^*$ follows from \cite{Roc70}[Theorem 26.3]. 

For the strict convexity, we first prove that for each $v \in \Dd_{F^*}^\circ$, there exists a $\lambda_v^* \in V$ such that
\[
F^*(v) = \inp{\lambda_v^*}{v} - F(\lambda_v^*).
\]
Indeed, suppose this is not the case. Because $F^*(v) < \infty$, we can find a sequence $\lambda_n$ such that
\[
F^*(v) = \lim_{n\to\infty} \inp{\lambda_n}{v} - F(\lambda_n).
\]
Because the map $\lambda \mapsto \inp{\lambda}{v} - F_x(\lambda)$ is continuous, the sequence $\lambda_n$ cannot contain a convergent subsequence, else the limit of this subsequence would serve as $\lambda_v^*$. Consequently, we must have that $\lim_{n\to\infty} |\lambda_n| = \infty$. 

But then there exists a $w \in V$ such that $\lim_{n\to\infty} \inp{\lambda_n}{w} = \infty$. To see this, suppose such a $w$ does not exist. Denoting by $e_1,\ldots,e_d$ a basis of $V$, we must have that $\inp{\lambda_n}{e_i}$ is a bounded sequence for all $i = 1,\ldots,d$. But then, by taking subsequences, we find $\inp{\lambda_n}{e_i}$ converges for all $i = 1,\ldots,d$, which contradicts the fact that $\lim_{n\to\infty} |\lambda_n| = \infty$. 

Now consider $v + \epsilon w \in V$ and let $\lambda_n$ be the sequence found above. We have that
\[
F^*(v+\epsilon w) \geq \lim_{n\to\infty} \inp{\lambda_n}{v + \epsilon w} - F(\lambda_n) = F^*(v) + \epsilon\lim_{n\to\infty} \inp{\lambda_n}{w} = \infty.
\]
We conclude that $v + \epsilon w \notin \Dd_{F^*}$ for any $\epsilon > 0$, which contradicts the assumption that $v \in \Dd_{F^*}^\circ$.\\

We are now ready to prove that $F^*$ is strictly convex on $\Dd_{F^*}^\circ$. To this end, fix $v,w \in \Dd_{F^*}^\circ$, $v \neq w$ and $t \in (0,1)$ and assume that
\begin{equation}\label{eq:equality_convex}
F^*(tv + (1-t)w) = tF^*(v) + (1-t)F^*(w).
\end{equation}
Now let $\lambda_t^*$ be such that
\[
F^*(tv + (1-t)w) = \inp{tv + (1-t)w}{\lambda_t^*} - F(\lambda_t^*).
\]
We find that
\[
tF^*(v) + (1-t)F^*(w) = t(\inp{\lambda_t^*}{v} - F(\lambda_t^*)) + (1-t)(\inp{\lambda_t^*}{w} - F(\lambda_t^*)).
\]
But then we find that
\[
F^*(v) = \inp{v}{\lambda_t^*} - F(\lambda_t^*)
\]
and
\[
F^*(w) = \inp{w}{\lambda_t^*} - F(\lambda_t^*).
\]
Now, because $F$ is everywhere differentiable, it must be that $\Nabla F(\lambda_t^*) = v$ and $\Nabla F(\lambda_t^*) = w$, which contradicts the assumption that $v \neq w$. We conclude that $F^*$ is strictly convex on $\Dd_{F^*}^\circ$.
\end{proof}




{\em Acknowledgment} -- The author thanks Frank Redig for helpful discussions, suggestions for certain arguments and the verification of various proofs. Thanks goes also to Jan van Neerven for suggestions on improving the clarity of certain definitions.

\end{document}